\title[Quadratic self-duality]{On quadratic coalgebras, duality and the universal Steenrod algebra}
\author[Geoffrey Powell]{Geoffrey M.L. Powell}
\address{Laboratoire Analyse, Géométrie et Applications, UMR 7539\\ Institut Galilée, Université Paris 13, 93430 Villetaneuse, France}
\email{powell@math.univ-paris13.fr} 
\keywords{quadratic coalgebra - duality - universal Steenrod algebra}
\subjclass[2000]{Primary 55S10}
\thanks{This work was partly financed by the project ANR BLAN08-2 338236, HGRT}
\newtheorem{thm}{Theorem}[subsection]
\newtheorem{prop}[thm]{Proposition}
\newtheorem{cor}[thm]{Corollary}
\newtheorem{lem}[thm]{Lemma}
\theoremstyle{definition}
\newtheorem{defn}[thm]{Definition}
\newtheorem{exam}[thm]{Example}
\theoremstyle{remark}
\newtheorem{rem}[thm]{Remark}
\newtheorem{nota}[thm]{Notation}
\newtheorem{hyp}[thm]{Hypothesis}
\newcommand{\catmonoid}{\mathrm{Monoid}}
\newcommand{\grcatmon}{\mathrm{Monoid}^{\mathrm{gr}}}
\newcommand{\dualst}{\mathscr{A}^*}
\newcommand{\univsteen}{\mathscr{Q}}
\newcommand{\atilde}{\widetilde{\mathscr{A}}}
\newcommand{\monoid}{{\mathscr{M}}}
\newcommand{\bialg}{{\mathscr{B}}}
\newcommand{\grcoalg}{\mathrm{Coalg}}
\newcommand{\conngrcoalg}{\grcoalg^{\mathrm{conn}}}
\newcommand{\quadgrcoalg}{\mathrm{Coalg}^q}
\newcommand{\ftquadgrcoalg}{\mathrm{Coalg}^q_{ft}}
\newcommand{\ftquadgralg}{\mathrm{Alg}^q_{ft}}
\newcommand{\vs}[1][\kfield]{\mathscr{E}_{#1}}
\newcommand{\ctensor}{{T_c}}
\renewcommand{\phi}{\varphi}
\renewcommand{\epsilon}{\varepsilon}
\newcommand{\nat}{\mathbb{N}}
\newcommand{\zed}{\mathbb{Z}}
\newcommand{\cx}{\mathbb{C}}
\newcommand{\field}{\mathbb{F}_2}
\newcommand{\kfield}{\mathbb{K}}
\newcommand{\op}{^\mathrm{op}}
\newcommand{\cali}{\mathcal{I}}
\newcommand{\calj}{\mathcal{J}}
\newcommand{\calo}{\mathcal{O}}
\newcommand{\cals}{\mathcal{S}}
\newcommand{\calx}{\mathcal{X}}
\newcommand{\caly}{\mathcal{Y}}
\begin{document}
\begin{abstract}
The notion of quadratic self-duality for coalgebras is developed with applications to algebraic structures which arise naturally
in algebraic topology, related to the universal Steenrod algebra via an appropriate form of duality. This explains and unifies results of Lomonaco and Singer.
\end{abstract}
\maketitle

\section{Introduction}

In his work related to the cohomology of the Steenrod algebra, Singer \cite{singer_invt_lambda} introduced a graded coalgebra $\Gamma$ over the field $\field$ and  showed how to obtain the dual of the opposite of the Lambda
algebra as a quotient of $\Gamma$. In related work \cite{singer_hopf_ops}, Singer analysed the structure of a certain sub-coalgebra,
which is dual to the Steenrod algebra of cohomology operations for Hopf algebra cohomology. These results can be compared with
those of
Lomonaco on the universal Steenrod algebra (aka. the big Steenrod algebra introduced by May in his general algebraic approach to the construction of Steenrod operations \cite{may_gen_alg}), who showed  that the invariant theoretic
constructions of Singer lead to
a natural presentation of the universal Steenrod algebra as a quadratic algebra \cite{lomonaco_dickson}. There
is a phenomenon of quadratic self-duality which relates these results; this is implicit in earlier results in the literature, where it appears under the guise of
Koszul duality, for example in the results of Lomonaco on reciprocity \cite{lomonaco_reciprocity}; the relation with
Singer's work \cite{singer_hopf_ops}
has been investigated in \cite{lomonaco_coalgebra}.

This note develops the general theory of quadratic self-duality and illustrates the theory by giving an independent construction of Singer's coalgebra $\Gamma$ in the spirit of Milnor's description of the dual Steenrod algebra. As a biproduct of this approach, the Steenrod algebra appears by a process of group completion. A direct proof that the coalgebra is quadratic is given.

These results are then applied to the universal Steenrod algebra, which is identified as the associated quadratic algebra. An important fact is that, although the coalgebra $\Gamma$ is not of finite type, it is obtained by a localization of a quadratic coalgebra $\bialg^+$, which is of finite type. An analogue of this result for the universal Steenrod algebra can be given.

The paper has two parts, the first considers generalities on quadratic self-duality and the second applies these results in the context of the universal Steenrod algebra. Section \ref{sect:coalgebras} reviews material on quadratic coalgebras, introducing the notion of
admissibility (dual to the existence of monomial bases for quadratic algebras), a weak dual form of a PBW basis and the  concept of transpose duality for admissible quadratic coalgebras. Quadratic self-duality is introduced in Section \ref{sect:coalg_dual} both via vector space duality and using transpose duality. This is
applied in Section \ref{subsect:quadratic_push_pull} to prove the associated reciprocity results between pushforward and pullback quadratic coalgebras.

The model for $\Gamma$ is constructed in Section \ref{sect:bialgebras}, using a monoid-valued functor on the category of commutative $\field$-algebras.
The main result of the section is Theorem \ref{thm:bialg_quad}, which gives a direct proof that $\Gamma$ is a quadratic coalgebra.
The fundamental properties of $\Gamma$ are established in Section \ref{sect:duality}, where it is shown that $\Gamma$ is admissible and
that $\Gamma$ is quadratically self-dual (Theorem \ref{thm:bialg_selfdual}).

Finally, in Section \ref{sect:universal}, the connection with the universal Steenrod algebra is explained. The main result is Theorem
\ref{thm:univsteen_self-dual}, which shows that the universal Steenrod algebra is the quadratic algebra associated to $\Gamma$ and, hence,
is quadratically self-dual. 
\tableofcontents
\part{Quadratic self-duality}
\section{Quadratic coalgebras}
\label{sect:coalgebras}

\subsection{One-cogenerated and quadratic coalgebras}

For the convenience of the reader, the basic notions concerning graded coalgebras over a field $\kfield$ (see \cite{posit_vishik}, \cite{milnor_moore}).

The category of $\nat$-graded vector spaces is
symmetric monoidal with respect to the graded tensor product, with  unit $\kfield$ concentrated in degree zero.

\begin{defn}
	An  $\nat$-graded coalgebra is a counital comonoid in the category of $\nat$-graded
$\kfield$-vector spaces. A coalgebra $C$ is connected if the counit $\epsilon$ induces an isomorphism $C_0 \cong \kfield$. A morphism of $\nat$-graded coalgebras $C \rightarrow C'$ is a morphism of counital comonoids.
\end{defn}

 \begin{nota}
 Let $\vs$ denote the category of $\kfield$-vector spaces and $\grcoalg$ the category of $\nat$-graded coalgebras and
$\conngrcoalg \subset \grcoalg$ the full subcategory of connected coalgebras.

Let $\ctensor$ denote the tensor coalgebra functor $\ctensor : \vs \rightarrow \conngrcoalg$ which is right adjoint to the
functor $\conngrcoalg \rightarrow \vs$, which sends a graded coalgebra $(C, \Delta, \epsilon)$ to $C_1$.
 \end{nota}

\begin{defn}
\
\begin{enumerate}
	\item
	A graded connected coalgebra $(C, \Delta , \epsilon)$ is one-cogenerated if  the canonical morphism  $C \rightarrow \ctensor (C_1)$ is a monomorphism of graded coalgebras.
\item
	For $V$ a $\kfield$-vector space and a subspace $R \leq V^{\otimes 2}$, let $\langle V; R \rangle$ denote the
one-cogenerated graded coalgebra defined by
	\[
		\langle V; R \rangle_n := \left \{
		\begin{array}{ll}
		\kfield	&n=0
	\\
	V & n=1 \\
	\bigcap _{i+j+2 = n} V^{\otimes i } \otimes R \otimes V^{\otimes j} & n \geq 2.
		\end{array}
\right.
	\]
\item
A graded coalgebra $C$ is quadratic if there exists a pair $V, R\leq V^{\otimes 2}$  and an isomorphism of graded
coalgebras $C \cong \langle V; R \rangle$.
\item
	Let $\quadgrcoalg \subset \conngrcoalg$ denote the full subcategory of quadratic coalgebras.
\end{enumerate}
\end{defn}

\begin{prop}
\label{prop:quadratic_coalgebras}
The category $\quadgrcoalg$ is equivalent to the category of pairs of $\kfield$-vector spaces $(V, R \leq V
^{\otimes 2} ) $, where a morphism $(V, R \leq V ^{\otimes 2} ) \rightarrow (W, T \leq W^{\otimes 2} )$ is a morphism of
vector spaces $f : V \rightarrow W$ such that $f^{\otimes 2} $ restricts to a morphism $R \rightarrow T$.
\end{prop}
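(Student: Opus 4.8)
The plan is to exhibit the functor $(V,R)\mapsto\langle V;R\rangle$ as one half of an equivalence, with quasi-inverse sending a quadratic coalgebra $C$ to the pair $(C_1, C_2)$, where $C_2$ is regarded as a subspace of $C_1^{\otimes 2}$ via the monomorphism $C\hookrightarrow\ctensor(C_1)$ supplied by one-cogeneratedness. First I would dispatch the object level, which is immediate: by definition $\langle V;R\rangle_2 = R$ (the only decomposition with $i+j+2=2$ being $i=j=0$), so the round trip on pairs is the identity, while any quadratic $C$ is by definition isomorphic to $\langle C_1; C_2\rangle$, giving a natural isomorphism in the other direction. In particular the functor is essentially surjective, and it remains to treat morphisms.

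Next I would render $(V,R)\mapsto\langle V;R\rangle$ functorial. Given $f\colon V\to W$ with $f^{\otimes 2}(R)\subseteq T$, functoriality of the tensor coalgebra furnishes a coalgebra morphism $\ctensor(f)=\bigoplus_n f^{\otimes n}\colon\ctensor(V)\to\ctensor(W)$, and the key point is that it carries $\langle V;R\rangle$ into $\langle W;T\rangle$. This reduces to the inclusions $f^{\otimes n}(\langle V;R\rangle_n)\subseteq\langle W;T\rangle_n$ for $n\geq 2$, which follow from two elementary facts: for each decomposition $i+j+2=n$ one has $f^{\otimes n}(V^{\otimes i}\otimes R\otimes V^{\otimes j})\subseteq W^{\otimes i}\otimes T\otimes W^{\otimes j}$ because $f^{\otimes 2}(R)\subseteq T$; and a linear map sends an intersection of subspaces into the intersection of the images. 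Restricting $\ctensor(f)$ to the subcoalgebra $\langle V;R\rangle$ then yields the desired morphism, and functoriality in $f$ is clear.

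For full faithfulness I would exploit the adjunction defining $\ctensor$, namely the natural bijection between coalgebra morphisms $C\to\ctensor(W)$ and linear maps $C_1\to W$. Writing $C=\langle V;R\rangle$ and $C'=\langle W;T\rangle$, one-cogeneratedness identifies a coalgebra morphism $C\to C'$ with a coalgebra morphism $C\to\ctensor(W)$ whose image lies in the subcoalgebra $C'$. Under the adjunction such morphisms correspond to linear maps $f\colon V\to W$; moreover the adjoint of $f$ is precisely the composite $C\hookrightarrow\ctensor(V)\xrightarrow{\ctensor(f)}\ctensor(W)$, since both are coalgebra morphisms into $\ctensor(W)$ with degree-one part $f$ and the adjunction pins down such a morphism by its degree-one part. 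Hence the morphism factors through $C'$ exactly when $\ctensor(f)$ carries each $\langle V;R\rangle_n$ into $\langle W;T\rangle_n$.

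The main obstacle, and the crux of the argument, is showing that this infinite family of factoring conditions collapses to the single quadratic condition $f^{\otimes 2}(R)\subseteq T$. One direction is trivial, taking $n=2$ where $\langle V;R\rangle_2=R$; the converse is exactly the intersection computation of the second paragraph. This establishes a bijection on Hom-sets that is visibly compatible with composition and identities, so the functor is full and faithful, and together with essential surjectivity this completes the equivalence.
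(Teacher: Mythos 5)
Your proof is correct. Note that the paper offers no proof of this proposition at all---it is stated as standard background (the companion statement on quadratic algebras is simply cited from \cite{posit_vishik})---so there is no argument of the author's to compare yours against; what you have written is the standard argument, correctly executed. You isolate exactly the two points that need checking: that $\ctensor(f)$ restricts to a coalgebra morphism $\langle V;R\rangle\rightarrow\langle W;T\rangle$ as soon as $f^{\otimes 2}(R)\subseteq T$ (the termwise inclusion plus the fact that images of intersections lie in intersections of images), and that a coalgebra morphism into $\ctensor(W)$ from a connected coalgebra is pinned down by its degree-one component (the cofree adjunction), which gives both faithfulness and, via the degree-two comparison $\langle V;R\rangle_2=R$, fullness.
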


\begin{nota}
	For a pair of vector spaces $(V, R \leq V^{\otimes 2} )$, let $\{ V; R \}$ denote the associated quadratic
algebra, defined as $T (V) / \langle R \rangle$, where $T (V)$ is the tensor algebra on $V$ and $\langle R \rangle$ is the two-sided graded ideal generated by $R$.
 \end{nota}

\begin{prop}
\cite{posit_vishik}
	The category $\quadgrcoalg$ of quadratic coalgebras is equivalent to the category of quadratic algebras via the correspondence $\langle V ; R \rangle \leftrightarrow \{V ; R \}$.
\end{prop}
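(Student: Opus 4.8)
The plan is to factor the claimed equivalence through the category of pairs $(V, R \leq V^{\otimes 2})$ already introduced, exploiting Proposition~\ref{prop:quadratic_coalgebras}. That result identifies $\quadgrcoalg$ with the category of pairs via $\langle V ; R \rangle \mapsto (V, R)$; it therefore suffices to establish a parallel equivalence between the category of pairs and the category of quadratic algebras, realised by the functor $(V, R) \mapsto \{ V ; R \}$. Composing the two equivalences then yields the correspondence $\langle V ; R \rangle \leftrightarrow \{ V ; R \}$ on the nose, since both factors leave the underlying pair $(V, R)$ unchanged.

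First I would check essential surjectivity, which is immediate: by definition every quadratic algebra is isomorphic to some $\{ V ; R \}$, hence lies in the image. The substance is the analysis of morphisms. A morphism in the category of quadratic algebras is a homomorphism of graded algebras $\{ V ; R \} \rightarrow \{ W ; T \}$. Since $\{ V ; R \} = T(V)/\langle R \rangle$ is generated as an algebra by its degree-one component $V$, any such homomorphism is completely determined by its restriction to degree one, a linear map $f : V \rightarrow W$. Conversely, given a linear map $f : V \rightarrow W$, functoriality of the tensor algebra produces $T(f) : T(V) \rightarrow T(W)$, and composing with the projection $T(W) \twoheadrightarrow \{ W ; T \}$ gives an algebra map $T(V) \rightarrow \{ W ; T \}$. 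This factors through the quotient $\{ V ; R \}$ if and only if it annihilates the ideal $\langle R \rangle$, equivalently if and only if $f^{\otimes 2}$ sends $R$ into $T$, since the ideal is generated in degree two and the composite is a map of algebras. Thus morphisms $\{ V ; R \} \rightarrow \{ W ; T \}$ are in natural bijection with linear maps $f : V \rightarrow W$ satisfying $f^{\otimes 2}(R) \subseteq T$.

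This bijection is exactly the description of morphisms in the category of pairs, so the functor $(V, R) \mapsto \{ V ; R \}$ is full and faithful; together with essential surjectivity it is an equivalence. The main obstacle, and the step demanding care, is precisely this morphism analysis: one must justify both that a graded algebra homomorphism out of $\{ V ; R \}$ is recovered from its degree-one part (using generation in degree one) and that the descent condition through the quotient reduces to the relation-preserving condition $f^{\otimes 2}(R) \subseteq T$ (using that $\langle R \rangle$ is generated in degree two, so that it suffices to test the composite on the generators $R$). Granting this, the composite equivalence $\quadgrcoalg \simeq \{\text{pairs}\} \simeq \{\text{quadratic algebras}\}$ sends $\langle V ; R \rangle$ to $\{ V ; R \}$, as asserted.
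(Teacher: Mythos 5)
Your proposal is correct. The paper itself offers no proof of this proposition --- it is quoted from Positselski--Vishik --- so there is no argument to diverge from; your factorization through the category of pairs $(V, R \leq V^{\otimes 2})$ via Proposition~\ref{prop:quadratic_coalgebras} is precisely the standard route, and the two points you isolate (a graded algebra map out of $\{V;R\}$ is determined by its degree-one restriction because the algebra is generated in degree one, and descent through the quotient need only be tested on the degree-two generators $R$, giving the condition $f^{\otimes 2}(R) \subseteq T$) are exactly the content needed to make the cited equivalence explicit.
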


\subsection{Pullback and pushout constructions for  quadratic coalgebras}
\label{subsect:pushforward_pullback}

Fix a quadratic coalgebra $\langle V; R \rangle$.

\begin{defn}
	For $W \leq V$ a sub-vector space, let $\langle W, R_W\rangle $ denote the quadratic coalgebra defined by $R_W
:= R \cap W^{\otimes 2}$.
\end{defn}

\begin{prop}
	Let $i : W \hookrightarrow V$ be an inclusion of vector spaces, then there is a unique monomorphism of quadratic
coalgebras $\langle W,
R_W \rangle \hookrightarrow \langle V; R \rangle $ which makes the following diagram commute:
	\[
		\xymatrix{
\langle W ; R_W \rangle
\ar@{^(->}[r]
\ar@{^(->}[d]
&
\ctensor (W)
\ar@{^(->}[d]^{\ctensor (i)}
\\
\langle V ; R \rangle
\ar@{^(->}[r]
&
\ctensor (V).
		}
	\]
\end{prop}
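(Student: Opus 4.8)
The plan is to obtain the morphism directly from the equivalence of Proposition \ref{prop:quadratic_coalgebras} and then to verify its three required properties degreewise inside the ambient tensor coalgebras. First I would check that the inclusion $i : W \hookrightarrow V$ underlies a morphism of pairs $(W, R_W) \rightarrow (V, R)$. By definition $R_W = R \cap W^{\otimes 2}$, so, identifying $W^{\otimes 2}$ with its image under the injection $i^{\otimes 2}$, one has $R_W \subseteq R$; hence $i^{\otimes 2}$ restricts to a map $R_W \rightarrow R$, which is exactly the condition required of a morphism of pairs. Applying the equivalence of Proposition \ref{prop:quadratic_coalgebras} then yields a morphism of quadratic coalgebras $\phi : \langle W; R_W \rangle \rightarrow \langle V; R \rangle$ whose component in degree $n$ is the restriction of $i^{\otimes n}$ to $\langle W; R_W \rangle_n$.

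Next, to see that $\phi$ is a monomorphism, I would argue one degree at a time: over the field $\kfield$ the functor $(-)^{\otimes n}$ preserves injections, so $i^{\otimes n} : W^{\otimes n} \rightarrow V^{\otimes n}$ is injective for every $n$, and therefore its restriction to $\langle W; R_W \rangle_n$ is injective as well. Commutativity of the square is then immediate once every object is regarded as a subobject of the relevant tensor coalgebra: the canonical maps $\langle W; R_W \rangle \rightarrow \ctensor (W)$ and $\langle V; R \rangle \rightarrow \ctensor (V)$ are the degreewise inclusions $\langle W; R_W \rangle_n \hookrightarrow W^{\otimes n}$ and $\langle V; R \rangle_n \hookrightarrow V^{\otimes n}$, while $\ctensor (i)$ is $i^{\otimes n}$ in degree $n$; both composites around the square agree with the restriction of $i^{\otimes n}$ followed by the inclusion into $V^{\otimes n}$.

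Finally, uniqueness follows formally from the defining property of a one-cogenerated coalgebra: since $\langle V; R \rangle$ is one-cogenerated, the canonical map $\langle V; R \rangle \rightarrow \ctensor (V)$ is a monomorphism. Any morphism $\langle W; R_W \rangle \rightarrow \langle V; R \rangle$ making the square commute has a prescribed composite with this monomorphism, namely the top-and-right path $\langle W; R_W \rangle \rightarrow \ctensor (W) \rightarrow \ctensor (V)$; left-cancellability of the monomorphism then forces the morphism to be $\phi$.

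I do not anticipate a serious obstacle, as the whole statement is a formal consequence of Proposition \ref{prop:quadratic_coalgebras} together with the exactness of tensor products over a field. The only point genuinely requiring care is confirming that the restriction of $i^{\otimes n}$ lands in the intersection defining $\langle V; R \rangle_n$; this holds termwise, since $R_W \subseteq R$ guarantees $i^{\otimes n}\bigl(W^{\otimes a} \otimes R_W \otimes W^{\otimes b}\bigr) \subseteq V^{\otimes a} \otimes R \otimes V^{\otimes b}$ for each decomposition $a+b+2 = n$, whence the image of $\langle W; R_W \rangle_n$ lies in $\bigcap_{a+b+2=n} V^{\otimes a} \otimes R \otimes V^{\otimes b} = \langle V; R \rangle_n$.
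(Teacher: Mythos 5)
Your proof is correct. The paper states this proposition without any proof, treating it as a formal consequence of the definitions, and your argument --- producing the map from the morphism of pairs $(W,R_W)\rightarrow (V,R)$ via Proposition \ref{prop:quadratic_coalgebras}, checking degreewise that the restriction of $i^{\otimes n}$ lands in the defining intersection, and deducing uniqueness by cancelling the monomorphism $\langle V;R\rangle \hookrightarrow \ctensor(V)$ --- is precisely the routine verification the paper leaves implicit.
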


\begin{rem}
	The underlying map  $\langle W; R_W \rangle \hookrightarrow \langle V; R \rangle$ is a  monomorphism of $\nat$-graded $\kfield$-vector spaces.
\end{rem}

\begin{defn}
	Let $V  \twoheadrightarrow Z $ be an epimorphism of vector spaces. The quadratic coalgebra $\langle Z; R^Z
\rangle$ is the quadratic coalgebra defined by the image $R^Z$ of $R$ in $Z^{\otimes 2}$.
\end{defn}

\begin{prop}
	There is a unique morphism of coalgebras $\langle V; R \rangle \rightarrow \langle Z; R^Z \rangle$ which makes
the following diagram commute
	\[
		\xymatrix{
\langle V; R \rangle
\ar@{^(->}[r]
\ar[d]
&
\ctensor (V)
\ar[d]
\\
\langle Z; R^Z \rangle
\ar@{^(->}[r]
&
\ctensor (Z).
		}
	\]
	Moreover, the morphism $\langle V; R \rangle \rightarrow \langle Z; R^Z \rangle$ is an epimorphism of quadratic coalgebras.
\end{prop}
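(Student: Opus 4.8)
The plan is to construct the morphism through the equivalence of Proposition \ref{prop:quadratic_coalgebras}, which identifies $\quadgrcoalg$ with the category of pairs $(V, R \leq V^{\otimes 2})$, a morphism being a linear map whose second tensor power restricts appropriately. The given surjection $\pi : V \twoheadrightarrow Z$ satisfies $\pi^{\otimes 2}(R) = R^Z$ by the very definition of $R^Z$; in particular $\pi^{\otimes 2}$ restricts to a map $R \to R^Z$, so $\pi$ defines a morphism of pairs $(V, R) \to (Z, R^Z)$. Applying the equivalence produces the required morphism of quadratic coalgebras $\langle V; R \rangle \to \langle Z; R^Z \rangle$, whose degree-one component is $\pi$.

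For commutativity of the square I would appeal to the universal property of the tensor coalgebra: since $\ctensor$ is right adjoint to the functor $C \mapsto C_1$, any morphism of connected coalgebras with target $\ctensor(Z)$ is determined by its degree-one component, namely its composite with the projection onto $Z$. Both composites around the square are morphisms $\langle V; R \rangle \to \ctensor(Z)$, and both have degree-one component equal to $\pi$: the horizontal inclusions restrict to the identity in degree one, while the right-hand vertical map is $\ctensor(\pi)$, which is $\pi$ in degree one. Hence the two composites agree. Uniqueness is then immediate, for any morphism of quadratic coalgebras making the square commute must have degree-one component $\pi$, and by the equivalence of Proposition \ref{prop:quadratic_coalgebras} a morphism of quadratic coalgebras is determined by its degree-one component.

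It remains to check that the morphism is an epimorphism of quadratic coalgebras, and this is the step I would flag as the main subtlety. Here \emph{epimorphism} must be read categorically rather than as degreewise surjectivity of the underlying graded vector spaces: the image $\pi^{\otimes n}\big(\bigcap_{i+j+2=n} V^{\otimes i}\otimes R \otimes V^{\otimes j}\big)$ need not coincide with $\bigcap_{i+j+2=n} Z^{\otimes i}\otimes R^Z \otimes Z^{\otimes j}$, since the image of an intersection need not be the intersection of the images, so one cannot expect surjectivity in each degree. Instead I would again pass through the equivalence and verify the right-cancellation property in the category of pairs: given $g_1, g_2 : (Z, R^Z) \to (W, T)$ with $g_1 \circ \pi = g_2 \circ \pi$, the underlying linear maps agree after precomposition with $\pi$, and surjectivity of $\pi$ forces $g_1 = g_2$. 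Thus $\pi$ is an epimorphism in the category of pairs, and the equivalence transports this to the assertion that $\langle V; R \rangle \to \langle Z; R^Z \rangle$ is an epimorphism of quadratic coalgebras.
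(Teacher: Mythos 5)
Your proposal is correct. The paper itself states this proposition without proof (it is part of the review of standard material on quadratic coalgebras), so there is no argument to compare against; yours is the natural one: constructing the morphism and its uniqueness through the equivalence of Proposition \ref{prop:quadratic_coalgebras} together with the adjunction defining $\ctensor$, and—crucially—reading ``epimorphism'' categorically rather than as degreewise surjectivity, in accordance with Remark \ref{rem:epimorphism_not_surjection}, then verifying right-cancellability in the category of pairs, where it follows from surjectivity of the underlying linear map $V \twoheadrightarrow Z$ and transporting it across the equivalence.
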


\begin{rem}
\label{rem:epimorphism_not_surjection}
	In general  $\langle V; R \rangle \rightarrow \langle Z; R^Z \rangle$ is not a surjection of $\nat$-graded  $\kfield$-vector spaces.
\end{rem}

\subsection{The quadratic dual coalgebra}

To give the usual definition of the quadratic dual coalgebra, a finiteness hypothesis is required.

\begin{hyp}
\label{hyp:ft}
	Let $\langle V ; R \rangle $ be a quadratic coalgebra. Suppose that	$V$ is a $\zed$-graded $\kfield$-vector
space, $R \leq V^{\otimes 2}$ is a graded (homogeneous) subspace and $V \otimes V$ is a vector space of finite type.
\end{hyp}

\begin{nota}
\
\begin{enumerate}
 \item
Let $V^*$ denote the graded dual of the  graded $\kfield$-vector space $V$.
\item
Let $\ftquadgrcoalg$ denote the full subcategory of quadratic coalgebras satisfying Hypothesis \ref{hyp:ft} and
let $\ftquadgralg$ denote the analogous category of quadratic algebras of finite type.
\end{enumerate}
\end{nota}

\begin{lem}
Graded vector space duality induces an equivalence of categories
\[
	\Big( \ftquadgrcoalg \Big)
	\op
	\stackrel{\cong}{\rightarrow}
	\ftquadgralg.
\]
\end{lem}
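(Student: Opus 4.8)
The plan is to realise this equivalence concretely through the orthogonality operation on relations. Under Hypothesis \ref{hyp:ft} the space $V$ is of finite type, hence so is every tensor power $V^{\otimes n}$ and, since $\langle V; R\rangle_n \subseteq V^{\otimes n}$, the coalgebra $\langle V; R\rangle$ is of finite type; graded duality is therefore available and, via the natural finite-type isomorphism $(A \otimes B)^* \cong A^* \otimes B^*$, carries counital comonoids to unital monoids (dualising comultiplication to multiplication). I would set $R^\perp \leq (V^*)^{\otimes 2}$ to be the annihilator of $R$ under the identification $(V^*)^{\otimes 2} \cong (V^{\otimes 2})^*$, and claim that graded duality sends $\langle V; R\rangle$ to the quadratic algebra $\{V^*; R^\perp\}$. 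By Proposition \ref{prop:quadratic_coalgebras} a quadratic coalgebra is determined by the pair $(V, R)$, so it suffices to track this pair.

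The heart of the argument is the identification of $\langle V; R\rangle^*$ as a graded algebra. First I would dualise the defining monomorphism $\langle V; R\rangle \hookrightarrow \ctensor (V)$. The graded dual of the tensor coalgebra $\ctensor(V) = \bigoplus_n V^{\otimes n}$ (deconcatenation) is the tensor algebra $T(V^*) = \bigoplus_n (V^*)^{\otimes n}$ (concatenation), so dualising yields a surjection of graded algebras $T(V^*) \twoheadrightarrow \langle V; R\rangle^*$. It then remains to identify its kernel degreewise. Since the outer factors below are full spaces, one has $\big(V^{\otimes i} \otimes R \otimes V^{\otimes j}\big)^\perp = (V^*)^{\otimes i} \otimes R^\perp \otimes (V^*)^{\otimes j}$ inside $(V^*)^{\otimes n}$, and, as annihilators convert intersections into sums in finite type,
\[
  \big(\langle V; R\rangle_n\big)^\perp
  = \Big( \bigcap_{i+j+2=n} V^{\otimes i} \otimes R \otimes V^{\otimes j} \Big)^\perp
  = \sum_{i+j+2=n} (V^*)^{\otimes i} \otimes R^\perp \otimes (V^*)^{\otimes j}.
\]
The right-hand side is exactly the degree-$n$ component of the two-sided ideal $\langle R^\perp\rangle$, so the kernel of $T(V^*) \twoheadrightarrow \langle V; R\rangle^*$ is $\langle R^\perp\rangle$ and hence $\langle V; R\rangle^* \cong \{V^*; R^\perp\}$ as graded algebras.

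Finally, I would assemble this into an equivalence. A morphism of pairs $(V, R) \to (W, T)$, that is a linear map $f\colon V \to W$ with $f^{\otimes 2}(R) \subseteq T$, dualises to $f^*\colon W^* \to V^*$ with $(f^*)^{\otimes 2}(T^\perp) \subseteq R^\perp$, so $(V, R) \mapsto (V^*, R^\perp)$ is a contravariant functor, giving a functor $(\ftquadgrcoalg)\op \to \ftquadgralg$. The quasi-inverse is graded duality in the other direction, $\{W; T\} \mapsto \langle W^*; T^\perp\rangle$; the two composites are naturally isomorphic to the identities through the double-duality isomorphism $V \cong V^{**}$, under which $R^{\perp\perp} = R$, both valid because of the finiteness of $V \otimes V$. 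I expect the only delicate point to be the degreewise annihilator computation above, in particular the identity $(\bigcap_k U_k)^\perp = \sum_k U_k^\perp$ together with the compatibility of orthogonality with the tensor factors, which is precisely where the finite-type hypothesis is indispensable.
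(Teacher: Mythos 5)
The paper states this lemma without proof, so there is no argument of the author's to compare against; judged on its own terms, your proposal contains a genuine gap at its very first step. You assert that $V$ is of finite type and ``hence so is every tensor power $V^{\otimes n}$''. For $\zed$-graded vector spaces this inference is false, and the point is not a technicality: Hypothesis \ref{hyp:ft} imposes finiteness on $V \otimes V$ rather than on $V$ precisely because tensoring does not preserve finite type in the $\zed$-graded setting, and even finiteness of $V^{\otimes 2}$ does not propagate to $V^{\otimes 3}$. Concretely, let $V_d = \kfield$ for $d \in \{4^k : k \in \nat\} \cup \{-2\cdot 4^k : k \in \nat\}$ and $V_d = 0$ otherwise. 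Uniqueness of base-$4$ expansions shows that every integer is a sum of two elements of this set in at most finitely many ways, so $V \otimes V$ is of finite type and Hypothesis \ref{hyp:ft} holds (for any graded $R$); but $4^k + 4^k - 2\cdot 4^k = 0$ for every $k$, so $(V^{\otimes 3})_0$ is infinite dimensional. Everything your argument does in degrees $n \geq 3$ --- the identification of the graded dual of $\ctensor(V)$ with $T(V^*)$, and the identity $(V^{\otimes i}\otimes R \otimes V^{\otimes j})^\perp = (V^*)^{\otimes i}\otimes R^\perp \otimes (V^*)^{\otimes j}$ inside $(V^{\otimes n})^*$ --- rests on $(V^{\otimes n})^* \cong (V^*)^{\otimes n}$, which fails here. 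Indeed, taking $R = V^{\otimes 2}$ in this example, $\langle V; R\rangle = \ctensor(V)$ satisfies Hypothesis \ref{hyp:ft}, yet its graded dual strictly contains $T(V^*)$ in degree $3$ and is not generated in degree one, so it is not isomorphic to the quadratic algebra $\{V^*; R^\perp\} = T(V^*)$ at all.

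The way to prove the lemma under Hypothesis \ref{hyp:ft} alone is to stay at the level of the cogenerating data, which is the natural reading of ``induced by graded vector space duality''. By Proposition \ref{prop:quadratic_coalgebras}, $\ftquadgrcoalg$ is equivalent to the category of pairs $(V, R \leq V^{\otimes 2})$ subject to Hypothesis \ref{hyp:ft}, and likewise $\ftquadgralg$; the contravariant assignment $(V,R) \mapsto (V^*, R^\perp)$, $f \mapsto f^*$, is then an equivalence because finiteness of $V \otimes V$ by itself gives $(V^{\otimes 2})^* \cong (V^*)^{\otimes 2}$, $V \cong V^{**}$ and $R^{\perp\perp} = R$ --- exactly the bookkeeping of your final paragraph, none of which involves $V^{\otimes n}$ for $n \geq 3$. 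Your stronger degreewise identification $\langle V; R\rangle^* \cong \{V^*; R^\perp\}$ (in substance the first part of Proposition \ref{prop:duality_quadratic}) is correct, by precisely your annihilator computation, whenever all tensor powers $V^{\otimes n}$ are of finite type --- for instance when $V$ is of finite type and concentrated in degrees bounded on one side, as happens for $\bialg^+$ in the applications later in the paper --- but that is an additional hypothesis, not a consequence of Hypothesis \ref{hyp:ft}, and it cannot serve as the definition of the functor in the generality in which the lemma is stated.
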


\begin{defn}
	Let $C:= \langle V ; R \rangle $ be a quadratic coalgebra satisfying hypothesis \ref{hyp:ft}. The quadratic dual
$C^!$ of $C$ is the quadratic coalgebra $\{ V^* , R^\perp \}$, where $R^\perp $ is the kernel of $V^* \otimes V^*
\twoheadrightarrow R^*$.
\end{defn}

\begin{prop}
\label{prop:duality_quadratic}
	Let $C:= \langle V; R \rangle $ be an object of $\ftquadgrcoalg$.
	\begin{enumerate}
		\item
		The quadratic coalgebra $C^!$ is the dual to the quadratic algebra $\{ V; R \}$ (that is $\langle V; R
\rangle ^! \cong \{ V ; R \} ^*$).
		\item
		There is a natural isomorphism of quadratic coalgebras $C \cong (C^{!})^!$.
		\item
		Quadratic duality induces an equivalence of categories
		\[
			(\ftquadgrcoalg)\op \stackrel{\cong}{\rightarrow} \ftquadgrcoalg.
		\]
	\end{enumerate}
\end{prop}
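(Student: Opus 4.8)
The plan is to reduce all three statements to the behaviour of the quadratic data $(V,R)$ under graded vector space duality, exploiting the fact (Proposition \ref{prop:quadratic_coalgebras} together with the subsequent equivalence with quadratic algebras) that a quadratic coalgebra, and a morphism between quadratic coalgebras, is completely determined by its restriction to degrees $\leq 2$. Everything then comes down to the standard finite-type duality between a graded subspace and its annihilator, which is where Hypothesis \ref{hyp:ft} intervenes: since $V \otimes V$ is of finite type, each graded component of $V$ is finite-dimensional, so the biduality map $V \to V^{**}$ is an isomorphism and $(R^\perp)^\perp = R$ for any graded subspace $R \leq V^{\otimes 2}$.

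For part (1) I would compute the graded dual of the quadratic algebra $A := \{V; R\}$ directly. By the Lemma, $A^*$ is a quadratic coalgebra, so it suffices to identify its components in degrees $1$ and $2$. In degree $1$ one has $(A^*)_1 = A_1^* = V^*$. The multiplication $A_1 \otimes A_1 = V^{\otimes 2} \twoheadrightarrow A_2 = V^{\otimes 2}/R$ is the canonical surjection with kernel $R$; dualizing, and using finite type to identify $(V^{\otimes 2})^*$ with $(V^*)^{\otimes 2}$, gives an injection $(A_2)^* \hookrightarrow (V^*)^{\otimes 2}$ whose image is precisely the annihilator $R^\perp$. Hence the comultiplication exhibits $A^*$ as the one-cogenerated coalgebra with degree-$2$ datum $R^\perp$, that is $A^* \cong \langle V^*; R^\perp \rangle = C^!$, which is the asserted isomorphism $\langle V; R\rangle^! \cong \{V; R\}^*$.

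Part (2) then follows by iterating the construction: since $C^! = \langle V^*; R^\perp\rangle$, a second application gives $(C^!)^! = \langle (V^*)^*; (R^\perp)^\perp \rangle$, and the two finite-type identities recalled above, namely $V \cong V^{**}$ naturally and $(R^\perp)^\perp = R$, identify this with $\langle V; R\rangle = C$. For part (3) I would first check that $(-)^!$ is a well-defined contravariant functor on $\ftquadgrcoalg$: a morphism $(V,R) \to (W,T)$ is a map $f : V \to W$ with $f^{\otimes 2}(R) \leq T$, and its dual $f^* : W^* \to V^*$ satisfies $(f^*)^{\otimes 2}(T^\perp) \leq R^\perp$, so $f^*$ defines a morphism $C'^! \to C^!$ in the opposite direction. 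Combined with the natural isomorphism $\mathrm{Id} \cong (-)^{!!}$ of part (2), this shows that $(-)^!$ is its own quasi-inverse and hence an equivalence $(\ftquadgrcoalg)\op \stackrel{\cong}{\to} \ftquadgrcoalg$.

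The main obstacle is bookkeeping rather than conceptual: one must ensure that the finite-type hypothesis as stated, finiteness of $V \otimes V$ rather than of $V$, really does deliver $V \cong V^{**}$ and $(R^\perp)^\perp = R$ degreewise, and that the isomorphisms produced in parts (1) and (2) are compatible with comultiplication and with the equivalence of Proposition \ref{prop:quadratic_coalgebras}, so that they are genuinely natural and not merely degreewise. Verifying the containment $(f^*)^{\otimes 2}(T^\perp) \leq R^\perp$ and the naturality of biduality are the points requiring the most care.
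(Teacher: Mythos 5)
Your proposal is correct, but there is nothing in the paper to compare it against: Proposition \ref{prop:duality_quadratic}, like the Lemma preceding it, is stated without proof, being quoted as standard material on quadratic duality (cf.\ \cite{posit_vishik}, \cite{polis_posit}). Your argument fills in the details in the expected way. Parts (2) and (3) genuinely reduce to bookkeeping in degrees $\leq 2$, and your key observations there are the right ones: since $V_n\otimes V_n$ embeds in $(V\otimes V)_{2n}$, Hypothesis \ref{hyp:ft} forces each $V_n$ to be finite-dimensional, whence $V\cong V^{**}$ and $(R^\perp)^\perp=R$ degreewise; and the containment $(f^*)^{\otimes 2}(T^\perp)\leq R^\perp$ is exactly the functoriality check needed for (3).

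Two remarks on part (1). First, your reduction to degrees $\leq 2$ leans on the Lemma to know that $\{V;R\}^*$ is a quadratic coalgebra at all (strictly, this requires knowing that the quasi-inverse of the duality equivalence is again given by dualization). Since that Lemma is itself unproved in the paper, a more self-contained route is the degreewise computation: $\{V;R\}_n = V^{\otimes n}/\sum_{i+j+2=n}V^{\otimes i}\otimes R\otimes V^{\otimes j}$, the dual of a quotient is the annihilator of the kernel, and the annihilator of a sum is the intersection of the annihilators, so $(\{V;R\}_n)^*\cong \bigcap_{i+j+2=n}(V^*)^{\otimes i}\otimes R^\perp\otimes (V^*)^{\otimes j}=\langle V^*;R^\perp\rangle_n$, which proves quadraticity of the dual and identifies it in one stroke. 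Second, either route identifies $(V^{\otimes n})^*$ with $(V^*)^{\otimes n}$ for all $n$, whereas Hypothesis \ref{hyp:ft} literally controls only $n=2$; for a general $\zed$-graded $V$, finite type of $V\otimes V$ does not imply finite type of $V^{\otimes 3}$, so finiteness of the higher tensor powers (automatic when the grading of $V$ is bounded on one side, as in the paper's applications) is being used tacitly. That looseness is in the paper's formulation of the hypothesis, not something your argument introduces.
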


\subsection{Admissibility}

\begin{nota}
	Let $\kfield [\cali ]$ denote the free $\kfield$-vector space on the set $\cali$. The canonical generator of $\kfield [\cali]$ associated to the element $i
\in \cali$ is denoted by $[i]$.
\end{nota}

To specify an isomorphism of $\kfield$-vector spaces $\kfield [\cali ] \stackrel{\cong}{\rightarrow} V$ is equivalent to
giving a choice of basis indexed by $\cali$.

\begin{lem}
\label{lem:proj-inj}
	Let $\calx, \caly$ be sets.
	\begin{enumerate}
\item
There are  canonical isomorphisms $\kfield [\calx \times \caly] \cong \kfield
[\calx] \otimes \kfield [\caly]$ and $\kfield [\calx \amalg \caly] \cong \kfield [\calx] \oplus \kfield
[\caly]$ of vector spaces.
\item
		There is a canonical monomorphism of $\kfield$-vector spaces  $\kfield [\calx ] \hookrightarrow \kfield
[\calx \amalg \caly]$  and a canonical split epimorphism $\kfield [\calx \amalg \caly ]
\twoheadrightarrow \kfield [\calx]$ given by
		\[
		[w \in \calx \amalg \caly]
		\mapsto
		\left\{
		\begin{array}{ll}
			0 & w \in \caly \\
			{[}w\in  \cals{]}& w \in \calx .
		\end{array}
\right.
		\]
	\end{enumerate}
\end{lem}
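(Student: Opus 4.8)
The plan is to derive all four statements from the universal property of the free vector space functor $\kfield[-] \colon \mathrm{Set} \rightarrow \vs$, which is left adjoint to the forgetful functor and hence preserves colimits. In particular it carries the coproduct $\calx \amalg \caly$ in $\mathrm{Set}$ to the coproduct $\kfield[\calx] \oplus \kfield[\caly]$ in $\vs$, which gives the second isomorphism of (1) canonically and functorially. For the tensor-product isomorphism I would exhibit the assignment $[(x,y)] \mapsto [x] \otimes [y]$ and extend linearly: since $\{[(x,y)] : (x,y) \in \calx \times \caly\}$ is a basis of $\kfield[\calx \times \caly]$ and $\{[x] \otimes [y]\}$ is a basis of $\kfield[\calx] \otimes \kfield[\caly]$, this is a bijection on bases and therefore an isomorphism. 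The underlying content is that $\kfield[-]$ is strong symmetric monoidal from $(\mathrm{Set}, \times)$ to $(\vs, \otimes)$.

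For (2) I would read off both maps from the direct sum decomposition just established. The monomorphism is the structural inclusion of the first summand $\kfield[\calx] \hookrightarrow \kfield[\calx] \oplus \kfield[\caly] \cong \kfield[\calx \amalg \caly]$, which on bases sends $[x]$ to the corresponding generator of $\kfield[\calx \amalg \caly]$; injectivity is immediate, as it takes a basis to part of a basis. The split epimorphism is the projection onto this summand, which is precisely the displayed map sending the generators indexed by $\caly$ to $0$ and fixing those indexed by $\calx$. That this projection splits the inclusion is the defining property of the biproduct $\oplus$. Equivalently, one may define the epimorphism directly by prescribing the displayed values on the basis $\calx \amalg \caly$ --- legitimate since a linear map may be assigned arbitrarily on a basis --- and then check surjectivity and the splitting relation on basis elements.

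There is no real obstacle here: the lemma is formal. The only points meriting care are confirming well-definedness of each map on the chosen bases (automatic, since the free functor is determined by its values on generators) and verifying that the tensor isomorphism of (1) is natural in $\calx$ and $\caly$ rather than a mere set bijection; the latter reduces to commuting squares that can be checked one basis element at a time.
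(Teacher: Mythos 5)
Your proof is correct. The paper offers no proof of this lemma at all --- it is stated as a standard fact about the free vector space functor $\kfield[-]$ --- and your argument (preservation of coproducts by the left adjoint, the basis-matching/strong monoidality argument for $\kfield[\calx \times \caly] \cong \kfield[\calx] \otimes \kfield[\caly]$, and the summand inclusion and projection for part (2)) is exactly the standard verification the paper is taking for granted.
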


\begin{nota}
	Write $\cals'$ for the complement of a subset $\cals \subset \cali \times \cali $.
\end{nota}

\begin{defn}
	Let $(\cali , \cals \subset \cali \times \cali ) $ be a pair of sets. The quadratic coalgebra $\langle V; R
\rangle $ is $(\cali, \cals)$-admissible if there exists an isomorphism $\kfield [\cali] \stackrel{\cong}{\rightarrow}
V$ such that the composite $R\rightarrow \kfield [\cals]$ defined by the diagram
	\[
		\xymatrix{
		R \ar@{^(->}[r]
		\ar[d]_\cong
		&
		V \otimes V
			\ar[d]^\cong
				\\
		\kfield [\cals]
&
\kfield [\cali \times \cali ]
		\ar@{->>}[l]
		}
	\]
is an isomorphism, where the isomorphism $V \otimes V \cong \kfield [\cali \times \cali]$ is induced by the canonical
isomorphism  $\kfield[\cali \times \cali ] \cong \kfield [\cali] \otimes \kfield [\cali]$ and the given isomorphism
$\kfield [\cali ] \cong V$.
\end{defn}

\begin{rem}
	Compare \cite[Chapter 4, Section 1]{polis_posit} for conditions which establish the existence of admissible
bases (in the case of quadratic algebras).
\end{rem}

\begin{prop}
	Let $(\cali , \cals ) $ be as above.  The set of  $(\cali, \cals)$-admissible quadratic coalgebras correspond bijectively to the set of  functions
	\[
		f: \cals \times \cals' \rightarrow \kfield
	\]
such that, for each $s \in S$, the restriction $f (s, - ) : \cals' \rightarrow \kfield$ has finite support.
\end{prop}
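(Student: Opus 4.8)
The plan is to fix the admissible identification, read off $R$ as the graph of a linear map, and then record that map by a matrix of scalars subject to a finiteness condition along each row.

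First I would use the chosen admissible isomorphism to identify $V$ with $\kfield[\cali]$, so that by Lemma \ref{lem:proj-inj}(1) we obtain $V^{\otimes 2} \cong \kfield[\cali] \otimes \kfield[\cali] \cong \kfield[\cali \times \cali]$; since $\cali \times \cali = \cals \amalg \cals'$, a further application of the same lemma gives $\kfield[\cali \times \cali] \cong \kfield[\cals] \oplus \kfield[\cals']$. Under this decomposition the epimorphism $\kfield[\cali \times \cali] \twoheadrightarrow \kfield[\cals]$ appearing in the admissibility diagram is precisely the split epimorphism of Lemma \ref{lem:proj-inj}(2), which kills $\kfield[\cals']$ and restricts to the identity on $\kfield[\cals]$. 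By Proposition \ref{prop:quadratic_coalgebras} an $(\cali,\cals)$-admissible quadratic coalgebra equipped with its admissible basis is the same datum as the subspace $R \leq \kfield[\cals] \oplus \kfield[\cals']$, and admissibility asserts exactly that the restriction to $R$ of the projection $p \colon \kfield[\cals] \oplus \kfield[\cals'] \to \kfield[\cals]$ is an isomorphism. Thus the set to be counted is the set of such subspaces $R$.

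Next I would invoke the elementary fact that a subspace $R$ of a direct sum $A \oplus B$ for which $p|_R \colon R \to A$ is an isomorphism is exactly the graph of a linear map $g \colon A \to B$, namely $g := q \circ (p|_R)^{-1}$, where $q$ denotes the projection onto $B$; conversely every such graph has this property. Applying this with $A = \kfield[\cals]$ and $B = \kfield[\cals']$ produces a bijection between the admissible subspaces $R$ and the linear maps $g \colon \kfield[\cals] \to \kfield[\cals']$.

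Finally I would encode each $g$ by its matrix. Because $\kfield[\cals]$ is free on the generators $[s]$, $s \in \cals$, the map $g$ is determined by the elements $g([s]) \in \kfield[\cals']$, and each such element is a \emph{finite} $\kfield$-linear combination $\sum_{s' \in \cals'} f(s,s')[s']$; recording these coefficients yields a function $f \colon \cals \times \cals' \to \kfield$ for which every restriction $f(s,-)$ has finite support, and conversely any such $f$ defines a well-defined linear map $g$ by the same formula. Composing the three bijections gives the statement. The one point requiring care—and the only real content beyond bookkeeping—is this last step: the finite-support hypothesis on $f(s,-)$ is precisely the condition that the formal sum $\sum_{s'} f(s,s')[s']$ lie in the free vector space $\kfield[\cals']$, so that $g$ is genuinely defined. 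Dropping it would permit infinite combinations corresponding to no element of $\kfield[\cals']$, which is exactly why the condition cannot be omitted when $\cali$ (and hence $\cals'$) is infinite.
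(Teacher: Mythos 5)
Your proof is correct and takes essentially the same route as the paper's: both decompose $\kfield[\cali\times\cali]$ as $\kfield[\cals]\oplus\kfield[\cals']$ and record an admissible $R$ by the coefficients of its basis elements along $\kfield[\cals']$, your ``graph of a linear map'' packaging being precisely the paper's formula $[s]\mapsto [s] - \sum_{s'\in\cals'} f(s,s')[s']$ up to an irrelevant sign convention. The finite-support condition enters for the same reason in both arguments, namely that elements of the free vector space $\kfield[\cals']$ are finite linear combinations.
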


\begin{proof}
Suppose that  $\langle \kfield [\cali] ; R \rangle $ is an  $(\cali, \cals)$-admissible quadratic coalgebra.
By hypothesis, there is an  isomorphism $R \cong \kfield [\cals ]$ with respect to which the inclusion $R \hookrightarrow
\kfield[\cali ] \otimes \kfield [\cali] \cong \kfield [\cali \times \cali] $ is defined by
	\begin{eqnarray}
	\label{eqn:basic_admissible}
		[s] \mapsto [s] - \Sigma_{s' \in \cals '} f (s, s') [s'],
	\end{eqnarray}
	for some coefficients $f (s, s') \in \kfield$, such that  the function $f (s,-) : s' \mapsto f(s,s')$ has finite support.

	Conversely, given such a function $f: \cals \times \cals ' \rightarrow \kfield$, the equation
(\ref{eqn:basic_admissible}) defines an inclusion $\kfield [\cals ] \hookrightarrow \kfield [\cali ] \otimes \kfield
[\cali]$ which gives an $(\cali, \cals)$-admissible quadratic coalgebra.
\end{proof}

\begin{nota}
	Denote the $(\cali, \cals)$-admissible quadratic coalgebra $\langle \kfield [\cali] ; R \rangle $ associated to
the
function $f : \cals \times \cals' \rightarrow \kfield$  by $\langle \cali;
\cals, f \rangle $.
\end{nota}

\subsection{Transpose duality}

For the purposes of this paper, a generalization of the quadratic duality functor is required, which allows the
finite-type hypothesis to be relaxed.

\begin{defn}
	The $(\cali, \cals)$-admissible quadratic coalgebra $\langle \cali; \cals, f \rangle $ is dualizable if the
function $f (-, s') : \cals \rightarrow \kfield$ has finite support, for every $s' \in \cals '$.
\end{defn}

\begin{nota}
	For $f: \cals \times \cals' \rightarrow \kfield$ a function, write $f^! : \cals ' \times \cals \rightarrow
\kfield$  for the function
	$f( s', s) = f (s,s')$, $\forall (s', s) \in \cals ' \times \cals$.
	\end{nota}

\begin{defn}
The transpose dual of the  dualizable $(\cali, \cals)$-admissible quadratic coalgebra $\langle \cali; \cals, f \rangle $
	is the quadratic coalgebra
	 $
		\langle \cali ; \cals , f \rangle^! := \langle \cali ; \cals' , f^! \rangle.
	 $
\end{defn}

\begin{rem}
	The terminology transpose duality is used to avoid confusion with that of quadratic duality. The two notions can
be related when $\cali$ is equipped with a $\zed$-grading (that is a function $\cali \rightarrow \zed$) and  $\kfield
[\cali \times \cali ]$ is of finite type with respect to this grading. In this case, the dual basis provides a canonical
isomorphism between
	$
		\kfield [\cali \times \cali ] ^*$ and $ \kfield [\cali \times \cali ]
	$
which relates the two notions.
\end{rem}

\subsection{Admissibility,  pullbacks and pushforwards}
\label{subsect:admissible_push_pull}

In general, the property of admissibility is not preserved under pullbacks or pushforwards; the following result gives an explicit criterion when taking a sub-basis of cogenerators.

\begin{nota}
For  $(\cali, \cals \subset \cali \times \cali) $ sets and $\calj
\subset \cali$ a subset, write $\cals_\calj:=\cals \cap \calj^{\times 2}$.
\end{nota}

\begin{prop}
\label{prop:admissible_pullback}
	Let $\langle \kfield [\cali]; R \rangle $ be an $(\cali, \cals)$-admissible quadratic coalgebra. For $\calj
\subset \cali$ a subset, the pullback quadratic coalgebra $\langle \kfield [\calj] ; R_{\kfield[\calj]} \rangle $ is
$(\calj, \cals_\calj)$-admissible if and only if the function $f : \cals \times \cals' \rightarrow \kfield$ associated
to $\langle \kfield [\cali]; R \rangle $ satisfies
\[
	f (s_\calj, s') =0
\]
for all $s_\calj \in \cals _\calj $ and $s' \in \cals' \backslash \cals'_\calj $.
\end{prop}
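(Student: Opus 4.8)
The plan is to work throughout with the basis of $V$ indexed by $\cali$ coming from the admissible structure, and with the inherited basis of $\kfield[\calj] \subset \kfield[\cali]$ indexed by $\calj$. By (\ref{eqn:basic_admissible}) the inclusion $R \hookrightarrow \kfield[\cali]\otimes\kfield[\cali] \cong \kfield[\cali\times\cali]$ is given on the basis of $R$ by $[s] \mapsto [s] - \sum_{s'\in\cals'} f(s,s')[s']$, so a general element of $R$ reads $x = \sum_{s\in\cals} c_s\big([s] - \sum_{s'\in\cals'} f(s,s')[s']\big)$ with $(c_s)$ of finite support. The first step is to compute the pullback relations $R_{\kfield[\calj]} = R \cap \kfield[\calj\times\calj]$ explicitly in this basis.

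The decisive bookkeeping is that the standard basis vectors $[w]$, $w\in\cali\times\cali$, split according to whether $w$ lies in $\calj\times\calj$, and the external ones ($w\notin\calj\times\calj$) split further into those in $\cals\backslash\cals_\calj$ and those in $\cals'\backslash\cals'_\calj$, where $\cals'_\calj = \cals'\cap(\calj\times\calj)$. Reading off the coefficient of each external basis vector in $x$ gives $x\in\kfield[\calj\times\calj]$ if and only if (i) $c_s=0$ for every $s\in\cals\backslash\cals_\calj$, and, using (i), (ii) $\sum_{s\in\cals_\calj} c_s f(s,s')=0$ for every $s'\in\cals'\backslash\cals'_\calj$. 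Thus $R_{\kfield[\calj]}$ is identified with the space of finitely supported families $(c_s)_{s\in\cals_\calj}$ satisfying the linear constraints (ii).

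Next I would analyse the admissibility composite for the inherited basis, namely $R_{\kfield[\calj]} \hookrightarrow \kfield[\calj\times\calj] \twoheadrightarrow \kfield[\cals_\calj]$. On $x$ as above this projection kills the surviving $[s']$ with $s'\in\cals'_\calj$ and records exactly the family $(c_s)_{s\in\cals_\calj}$; hence it is always injective, and its image is precisely the constraint space cut out by (ii). Therefore this composite is an isomorphism if and only if the constraints (ii) are vacuous, that is if and only if $f(s_\calj,s')=0$ for all $s_\calj\in\cals_\calj$ and all $s'\in\cals'\backslash\cals'_\calj$ — the asserted condition — and in that case the inherited basis witnesses $(\calj,\cals_\calj)$-admissibility.

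The step I expect to be the main obstacle is that admissibility only requires the existence of \emph{some} basis realizing the isomorphism, whereas the computation above uses the inherited one. I would dispose of this using the injectivity just established together with a degreewise dimension count (each graded piece being finite-dimensional): admissibility with index $\cals_\calj$ forces $R_{\kfield[\calj]}$ to have dimension $|\cals_\calj|$ in each degree, while if the displayed condition fails the constraints (ii) are nontrivial, so the injective image is a proper subspace of $\kfield[\cals_\calj]$ and the dimension drops. Hence no basis can realize admissibility when the condition fails, whereas the inherited basis does so when it holds, which establishes the equivalence.
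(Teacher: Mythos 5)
Your first three paragraphs are correct and amount to the paper's own proof written out in coordinates: the paper assembles the pullback square defining $R_{\kfield[\calj]}$ together with the canonical inclusions and projections of Lemma \ref{lem:proj-inj} into a commutative diagram, deduces that the composite $R_{\kfield[\calj]}\hookrightarrow\kfield[\calj\times\calj]\twoheadrightarrow\kfield[\cals_\calj]$ is a monomorphism, and reads off that it is an isomorphism exactly when $f(s_\calj,s')=0$ for all $s_\calj\in\cals_\calj$ and $s'\in\cals'\backslash\cals'_\calj$. Your identification of $R_{\kfield[\calj]}$ with the finitely supported families $(c_s)_{s\in\cals_\calj}$ satisfying your constraints (ii), and of the projection as recording the family $(c_s)$, is exactly that argument in explicit form.

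The weak point is your final paragraph. The subtlety you raise is genuine --- admissibility is defined by the \emph{existence} of some basis, while the computation uses the inherited one --- but the repair you propose has no footing in this generality. The admissibility framework is set up precisely to avoid Hypothesis \ref{hyp:ft}: no grading on $\cali$ is assumed, nothing is required to be of finite type, and in the paper's main application ($\cali=\zed$, the coalgebra $\bialg$) the relevant spaces are infinite-dimensional. When $\cals_\calj$ is infinite, the proper subspace of $\kfield[\cals_\calj]$ cut out by your constraints (ii) can be abstractly isomorphic to $\kfield[\cals_\calj]$ itself, so ``the dimension drops'' rules out nothing, and excluding an exotic basis that realizes admissibility would require a different argument. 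You should be aware, however, that the paper's proof is silent on exactly the same point: it tacitly interprets admissibility of the pullback with respect to the basis inherited from the given admissible structure, and for that reading your first three paragraphs constitute a complete proof of the equivalence. So your write-up matches the paper's actual content; either state the result in that (inherited-basis) form, or note explicitly that your concluding dimension count is valid only under a finiteness hypothesis such as degreewise finite-dimensionality, which is not part of the Proposition's hypotheses.
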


\begin{proof}
	Under the hypothesis on $\langle \kfield [\cali]; R \rangle$, there is a commutative diagram:
\[
	\xymatrix{
R_{\kfield[\calj]}
\ar@{^(->}[r]
\ar@{^(->}[d]
&
\kfield [\calj \times \calj]
\ar@{->>}[r]
\ar@{^(->}[d]
&
\kfield [\cals_\calj]
\ar@{^(->}[d]
\\
R
\ar[r]
\ar@/_1pc/[rr]_\cong
&
\kfield [\cali \times \cali]
\ar[r]
&
\kfield [\cals],
}
\]
where the left hand square is a pullback (by definition) and the right hand square is defined using the canonical
inclusions and projections of Lemma \ref{lem:proj-inj}; commutativity is a consequence of the definition of
$\cals_\calj$.

It follows that the composite of the top row is a monomorphism; commutativity of the left hand square shows that it is an isomorphism if and only if the given condition is satisfied.
\end{proof}

Similarly, for the pushforward of an admissible quadratic coalgebra:

\begin{prop}
\label{prop:admissible_pushforward}
	Let $\langle \kfield[\cali]; R \rangle $ be an $(\cali, \cals)$-admissible quadratic coalgebra. For $\calj
\subset \cali$ a subset,   the
pushforward quadratic coalgebra $\langle \kfield [\calj] ; R^{\kfield[\calj]} \rangle $ is $(\calj,
\cals_\calj)$-admissible if and only if the function $f : \cals \times \cals' \rightarrow \kfield$ associated to
$\langle \kfield [\cali]; R \rangle $ satisfies
\[
	f (s, s') =0
\]
for all $s  \in  \cals \backslash \cals _\calj $ and $s' \in  \cals'_\calj$.
\end{prop}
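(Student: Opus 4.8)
The plan is to prove this by transposing the argument for the pullback (Proposition \ref{prop:admissible_pullback}): where that proof exhibited a monomorphism $R_{\kfield[\calj]} \to \kfield[\cals_\calj]$ that is an isomorphism exactly under the stated hypothesis, here I will exhibit an \emph{epimorphism} with the analogous property. First I record the decomposition $\calj^{\times 2} = \cals_\calj \amalg \cals'_\calj$, obtained by intersecting $\cali \times \cali = \cals \amalg \cals'$ with $\calj^{\times 2}$. This identifies $\cals'_\calj$ as precisely the complement of $\cals_\calj$ inside $\calj \times \calj$, which is what makes $(\calj, \cals_\calj)$-admissibility meaningful and furnishes the canonical split projection $\kfield[\calj \times \calj] \twoheadrightarrow \kfield[\cals_\calj]$ of Lemma \ref{lem:proj-inj}. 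Writing $\pi : \kfield[\cali] \twoheadrightarrow \kfield[\calj]$ for the split epimorphism of that Lemma, the pushforward is $R^{\kfield[\calj]} = (\pi \otimes \pi)(R)$, and I assemble the commutative diagram
\[
\xymatrix{
R \ar@{->>}[d] \ar@{^(->}[r] \ar@/^1pc/[rr]^{\cong} & \kfield [\cali \times \cali] \ar@{->>}[d] \ar@{->>}[r] & \kfield [\cals] \ar@{->>}[d] \\
R^{\kfield[\calj]} \ar@{^(->}[r] & \kfield [\calj \times \calj] \ar@{->>}[r] & \kfield [\cals_\calj]
}
\]
in which the left-hand vertical is surjective by definition of the pushforward, the middle and right verticals are split projections, and the top composite is an isomorphism by the assumed $(\cali, \cals)$-admissibility of $\langle \kfield[\cali]; R \rangle$. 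Commutativity of the right-hand square is checked on a basis element $[(a,b)]$: both composites send it to $[(a,b)]$ when $(a,b) \in \cals_\calj$ and to $0$ otherwise.

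From the diagram the bottom composite $R^{\kfield[\calj]} \to \kfield[\cals_\calj]$ is a surjection, being the factorisation of the surjection $R \xrightarrow{\cong} \kfield[\cals] \twoheadrightarrow \kfield[\cals_\calj]$ through the surjection $R \twoheadrightarrow R^{\kfield[\calj]}$; this is the exact dual of the fact that the corresponding map is always a monomorphism in the pullback case. Hence $\langle \kfield[\calj]; R^{\kfield[\calj]} \rangle$ is $(\calj, \cals_\calj)$-admissible, with respect to the inherited basis $\calj$, precisely when this surjection is in addition injective, i.e. an isomorphism. Since the composite $R \twoheadrightarrow R^{\kfield[\calj]} \to \kfield[\cals_\calj]$ is the surjection $\kfield[\cals] \twoheadrightarrow \kfield[\cals_\calj]$ under the identification $R \cong \kfield[\cals]$, injectivity of $R^{\kfield[\calj]} \to \kfield[\cals_\calj]$ amounts to the equality of kernels $\ker(R \to \kfield[\cals_\calj]) = \ker(R \twoheadrightarrow R^{\kfield[\calj]})$; as the former always contains the latter, it suffices to test whether $\ker(R \to \kfield[\cals_\calj])$ is annihilated by $\pi \otimes \pi$.

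This reduces the statement to a computation on the admissible generators $r_s := [s] - \sum_{s' \in \cals'} f(s,s')[s']$ of $R$. Under $R \cong \kfield[\cals]$ the kernel $\ker(R \to \kfield[\cals_\calj])$ is spanned by the $r_s$ with $s \in \cals \setminus \cals_\calj$, so the condition is that $(\pi \otimes \pi)(r_s) = 0$ for each such $s$ (testing on the generators one at a time). For $s \in \cals \setminus \cals_\calj$ one has $s \notin \calj^{\times 2}$, whence $(\pi \otimes \pi)[s] = 0$, while $(\pi \otimes \pi)[s'] = [s']$ for $s' \in \cals'_\calj$ and $0$ for all other $s'$; thus $(\pi \otimes \pi)(r_s) = -\sum_{s' \in \cals'_\calj} f(s,s')[s']$. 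Because the $[s']$ with $s' \in \cals'_\calj$ are linearly independent, the vanishing of all these elements is equivalent to $f(s,s') = 0$ for every $s \in \cals \setminus \cals_\calj$ and $s' \in \cals'_\calj$, which is the asserted criterion.

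The step I expect to require the most care is the bookkeeping of the two complements: one must keep $\cals'_\calj$, the complement of $\cals_\calj$ \emph{inside} $\calj \times \calj$, distinct from $\cals' \setminus \cals'_\calj$, and verify that the obstruction to injectivity is supported exactly on the generators indexed by $\cals \setminus \cals_\calj$ and detected exactly by the $\cals'_\calj$-coordinates. Once the decomposition $\calj^{\times 2} = \cals_\calj \amalg \cals'_\calj$ is fixed, everything else is the formal transpose of the pullback argument.
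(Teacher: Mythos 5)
Your proof is correct and is precisely the ``formal dualization'' that the paper invokes: its entire proof of this proposition is the single sentence that it is formally dual to Proposition \ref{prop:admissible_pullback}, and your diagram (epimorphisms replacing monomorphisms, with the same outer isomorphism $R \cong \kfield[\cals]$) is exactly that dual argument. The only difference is one of detail, not of approach: you make explicit the computation on the admissible generators $r_s = [s] - \sum_{s' \in \cals'} f(s,s')[s']$ and the kernel comparison, which the paper leaves implicit even in the pullback case.
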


\begin{proof}
	The proof is formally dual to that of Proposition \ref{prop:admissible_pullback}.
\end{proof}

These conditions are related by transpose duality.

\begin{cor}
\label{cor:admissible_quadratic_duality}
	Let $\langle \kfield [\cali] ; R \rangle \cong \langle \cali; \cals , f \rangle $ be a dualizable quadratic
coalgebra, with transpose dual $\langle \kfield [\cali] ; T \rangle \cong \langle \cali; \cals ', f^! \rangle$ and
$\calj \subset \cali$ be a subset. The following conditions are equivalent:
\begin{enumerate}
	\item
the pullback quadratic coalgebra $\langle \kfield [\calj] ; R_{\kfield[\calj]} \rangle $ is $(\calj,
\cals_\calj)$-admissible;
\item
the pushforward quadratic coalgebra $\langle \kfield [\calj] ; T^{\kfield[\calj]} \rangle $ is $(\calj,
\cals'_\calj)$-admissible.
\end{enumerate}
\end{cor}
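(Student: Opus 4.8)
The plan is to reduce each of the two admissibility conditions to one and the same vanishing equation on the structure function $f$, by invoking Propositions \ref{prop:admissible_pullback} and \ref{prop:admissible_pushforward} and then comparing the outputs using only the definition $f^!(s',s) = f(s,s')$ of transpose duality. Since both propositions already convert admissibility of a pullback (resp. pushforward) of a sub-basis into an explicit condition on coefficients, the corollary is essentially a matter of matching those conditions.

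First I would apply Proposition \ref{prop:admissible_pullback} directly to $\langle \kfield[\cali]; R\rangle \cong \langle \cali; \cals, f\rangle$. This identifies condition (1)---that the pullback $\langle \kfield[\calj]; R_{\kfield[\calj]}\rangle$ be $(\calj,\cals_\calj)$-admissible---with the vanishing
\[
f(s_\calj, s') = 0 \qquad \text{for all } s_\calj \in \cals_\calj,\ s' \in \cals' \setminus \cals'_\calj .
\]

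Next I would apply Proposition \ref{prop:admissible_pushforward} to the transpose dual $\langle \kfield[\cali]; T\rangle \cong \langle \cali; \cals', f^!\rangle$. Here the dualizability hypothesis on $\langle \cali;\cals,f\rangle$ is exactly what guarantees that $f^!(s',-)\colon s\mapsto f(s,s')$ has finite support, so that the transpose dual is a genuine $(\cali,\cals')$-admissible quadratic coalgebra to which the proposition applies. In specializing Proposition \ref{prop:admissible_pushforward} one must carry out the combinatorial book-keeping carefully: for the transpose dual the selected subset is $\cals'$ and its complement is $\cals$, the associated function is $f^!\colon \cals'\times\cals\to\kfield$, and the relevant sub-basis is $\cals'_\calj$ with complement $\cals_\calj$. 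With these substitutions, condition (2)---that the pushforward $\langle \kfield[\calj]; T^{\kfield[\calj]}\rangle$ be $(\calj,\cals'_\calj)$-admissible---becomes equivalent to
\[
f^!(s', s) = 0 \qquad \text{for all } s' \in \cals' \setminus \cals'_\calj,\ s \in \cals_\calj .
\]

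Finally, using $f^!(s',s) = f(s,s')$, the two displayed conditions range over exactly the same pairs $(s_\calj,s') \in \cals_\calj \times (\cals'\setminus\cals'_\calj)$ and assert the same vanishing; hence (1) and (2) are equivalent. I expect the only real obstacle to be the correct interchange $\cals\leftrightarrow\cals'$ and $\cals_\calj\leftrightarrow\cals'_\calj$ (together with the double-complement identity $(\cals')'=\cals$) when feeding the transpose dual into Proposition \ref{prop:admissible_pushforward}; once this substitution is performed accurately, the equivalence follows immediately from the symmetry built into the definition of $f^!$.
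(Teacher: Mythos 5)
Your proof is correct and follows exactly the paper's own argument: apply Proposition \ref{prop:admissible_pullback} to $\langle \cali; \cals, f \rangle$, apply Proposition \ref{prop:admissible_pushforward} to the transpose dual $\langle \cali; \cals', f^! \rangle$ (interchanging the roles of $\cals$ and $\cals'$), and identify the two vanishing conditions via $f^!(s',s) = f(s,s')$. The paper compresses this into one sentence; your version merely spells out the substitution book-keeping, including the correct observation that dualizability is what makes the transpose dual a legitimate admissible coalgebra to which the pushforward proposition applies.
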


\begin{proof}
The condition of Proposition \ref{prop:admissible_pushforward} applied to the transpose dual (thus replacing $(f, \cals)
$ by $(f^! , \cals')$) is equivalent to the condition of Proposition \ref{prop:admissible_pullback}.
\end{proof}

\subsection{The weak coPBW property and surjectivity}

In general, an epimorphism of quadratic graded coalgebras is not surjective as a morphism of graded vector spaces, (see Remark \ref{rem:epimorphism_not_surjection}). In the presence of suitable bases, the surjectivity as a morphism of
graded vector spaces can be established.

\begin{defn}
	For $\cali$ a set and $\cals \subset \cali \times \cali$, let $\cals ^{(n)}\subset \cali^n$ ($n \in \nat$) be
the  subsets defined by $\cals^{(0)} = \{\emptyset \}$, $\cals^{(1)} = \cali$ and, for $n \geq 2$, $\cals^{(n)}$
recursively:
	\[
		\cals^{(n)} := (\cals^{(n-1)} \times \cali ) \cap (\cali^{\times n-2} \times \cals) .
	\]
\end{defn}

	The following definition is inspired by the notion of a PBW basis, introduced by Priddy \cite{priddy}.

\begin{defn}
	An $(\cali, \cals)$-admissible quadratic coalgebra $\langle \kfield [\cali]; R \rangle$ satisfies the weak coPBW
property if, for all $n \in \nat$, the following composite is an isomorphism
	\[
		\xymatrix{
\langle \kfield [\cali]; R \rangle_n \ar@{^(->}[r]
\ar[d]_\cong
&
\kfield [\cali]^{\otimes n}
\ar[d]^\cong
\\
\kfield [\cals^{(n)}]
&
\kfield[\cali^{\times n}].
\ar@{->>}[l]
}
	\]
(The condition is satisfied  for $n \leq 2$ by the admissibility hypothesis.)
\end{defn}

The following is clear:

\begin{lem}
	For  $(\cali, \cals)$,  $\calj$ as above and $n \in \nat$,
	$
		\cals_\calj^{(n)}
		\cong
		\cals^{(n)} \cap \calj^{\times n}.
	$
\end{lem}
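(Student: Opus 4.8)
The plan is to prove the statement by induction on $n$, regarding both $\cals_\calj^{(n)}$ and $\cals^{(n)} \cap \calj^{\times n}$ as subsets of $\calj^{\times n}$, so that the asserted isomorphism is just the identity inclusion (equality of subsets). Here $\cals_\calj^{(n)}$ is formed by applying the recursive construction $(-)^{(n)}$ to the pair $(\calj, \cals_\calj)$, recalling $\cals_\calj = \cals \cap \calj^{\times 2}$.

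First I would dispose of the base cases $n=0$ and $n=1$. For $n=0$ both sides reduce to $\{\emptyset\}$, and for $n=1$ the left side is $\cals_\calj^{(1)} = \calj$ while the right side is $\cals^{(1)} \cap \calj = \cali \cap \calj = \calj$; both follow immediately from the definitions of $\cals^{(-)}$ and $\cals_\calj$. (The case $n=2$ is then subsumed by the inductive step, where it reproduces exactly the defining equation $\cals_\calj = \cals \cap \calj^{\times 2}$.)

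For the inductive step, assuming $\cals_\calj^{(n-1)} = \cals^{(n-1)} \cap \calj^{\times n-1}$, I would unwind the recursive definition $\cals_\calj^{(n)} = (\cals_\calj^{(n-1)} \times \calj) \cap (\calj^{\times n-2} \times \cals_\calj)$, substitute the inductive hypothesis together with $\cals_\calj = \cals \cap \calj^{\times 2}$, and apply distributivity of the Cartesian product over intersection. This rewrites $\cals_\calj^{(n)}$ as $(\cals^{(n-1)} \times \calj) \cap (\calj^{\times n-2} \times \cals) \cap \calj^{\times n}$, which must then be compared with $\cals^{(n)} \cap \calj^{\times n} = (\cals^{(n-1)} \times \cali) \cap (\cali^{\times n-2} \times \cals) \cap \calj^{\times n}$.

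The one point requiring care — and there is no genuine obstacle beyond it — is that, once one has intersected with $\calj^{\times n}$, any constraint of the form ``the $i$-th coordinate lies in $\cali$'' is automatically satisfied, since $\calj \subseteq \cali$. Concretely, inside $\calj^{\times n}$ one has $(\cals^{(n-1)} \times \cali) \cap \calj^{\times n} = (\cals^{(n-1)} \times \calj) \cap \calj^{\times n}$ and $(\cali^{\times n-2} \times \cals) \cap \calj^{\times n} = (\calj^{\times n-2} \times \cals) \cap \calj^{\times n}$. Substituting these two identities into the expression for $\cals^{(n)} \cap \calj^{\times n}$ shows it coincides with the expression obtained for $\cals_\calj^{(n)}$, completing the induction. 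The only thing to watch is the coordinate-by-coordinate bookkeeping that matches the $\cali$-factors against the corresponding $\calj$-factors.
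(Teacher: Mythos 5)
Your proof is correct: the induction on $n$, with the key observation that inside $\calj^{\times n}$ the constraints ``$i$-th coordinate in $\cali$'' become vacuous because $\calj \subseteq \cali$, is exactly the routine verification underlying this statement. The paper offers no proof at all (the lemma is introduced with ``The following is clear''), so your argument simply makes explicit the bookkeeping the author deemed obvious.
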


\begin{prop}
\label{prop:copbw_surjectivity}
	Let $\langle \kfield [\cali]; R\rangle$ be an $(\cali, \cals)$-admissible quadratic coalgebra and $\calj \subset
\cali$ be a subset for which $\langle \kfield [\calj]; R^{\kfield [\calj]} \rangle$ is $(\calj,
\cals_\calj)$-admissible.

If the coalgebras $\langle \kfield [\cali]; R\rangle$ and $\langle \kfield [\calj]; R^{\kfield
[\calj]} \rangle $ both satisfy the weak coPBW property, then the induced morphism of quadratic coalgebras
	\[
		C: = \langle \kfield [\cali ]; R \rangle
		\rightarrow
		C^\calj : = \langle \kfield [\calj]; R^{\kfield [\calj]} \rangle
	\]
is surjective as a morphism of $\nat$-graded $\kfield$-vector spaces.
\end{prop}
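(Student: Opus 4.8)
The plan is to compare the two coalgebras degree by degree through the monomial models supplied by the weak coPBW property, and to show that the map induced between these models is simply a split coordinate projection.

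First I would fix $n \in \nat$ and record the two coPBW isomorphisms. Since $\langle \kfield[\cali]; R\rangle$ is $(\cali,\cals)$-admissible and satisfies the weak coPBW property, the composite
\[
\phi_n \colon C_n \hookrightarrow \kfield[\cali]^{\otimes n} \cong \kfield[\cali^{\times n}] \stackrel{\pi^\cali_n}{\twoheadrightarrow} \kfield[\cals^{(n)}]
\]
is an isomorphism, the last arrow being the split projection of Lemma \ref{lem:proj-inj}. By the admissibility hypothesis on $C^\calj = \langle \kfield[\calj]; R^{\kfield[\calj]}\rangle$ together with its weak coPBW property, the analogous composite $\psi_n \colon C^\calj_n \xrightarrow{\cong} \kfield[\cals_\calj^{(n)}]$, via $\pi^\calj_n$, is also an isomorphism. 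I would then recall that, by its defining property, the pushforward morphism $g\colon C \to C^\calj$ is compatible with the inclusions into the tensor coalgebras; since functoriality of $\ctensor$ realises the degree-$n$ component of $\ctensor(f)$ as $f^{\otimes n}$, the induced map on degree-$n$ tensor powers is the split epimorphism $p_n := f^{\otimes n} \colon \kfield[\cali^{\times n}] \twoheadrightarrow \kfield[\calj^{\times n}]$, where $f \colon \kfield[\cali] \twoheadrightarrow \kfield[\calj]$ is the projection sending $[w]$ to $[w]$ for $w \in \calj$ and to $0$ otherwise. Concretely, $p_n[w] = [w]$ if $w \in \calj^{\times n}$ and $p_n[w] = 0$ otherwise.

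The key step is the commutativity of the resulting square of projections. Writing $\rho_n \colon \kfield[\cals^{(n)}] \twoheadrightarrow \kfield[\cals_\calj^{(n)}]$ for the split projection (well defined because $\cals_\calj^{(n)} = \cals^{(n)} \cap \calj^{\times n} \subseteq \cals^{(n)}$ by the preceding Lemma), I claim that $\pi^\calj_n \circ p_n = \rho_n \circ \pi^\cali_n$ as maps $\kfield[\cali^{\times n}] \to \kfield[\cals_\calj^{(n)}]$. This is checked on a basis element $[w]$, $w \in \cali^{\times n}$: both composites send $[w]$ to $[w]$ when $w \in \cals_\calj^{(n)}$ and to $0$ otherwise, the point being precisely that membership in $\cals_\calj^{(n)}$ is, by the identity $\cals_\calj^{(n)} = \cals^{(n)} \cap \calj^{\times n}$, equivalent to membership in both $\cals^{(n)}$ and $\calj^{\times n}$. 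Restricting this identity along the inclusion $C_n \hookrightarrow \kfield[\cali^{\times n}]$ and using the commutative square defining the pushforward morphism then gives $\psi_n \circ g_n = \rho_n \circ \phi_n$.

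Finally, since $\phi_n$ and $\psi_n$ are isomorphisms and $\rho_n$ is a (split) surjection, the identity $\psi_n \circ g_n = \rho_n \circ \phi_n$ forces $g_n = \psi_n^{-1} \circ \rho_n \circ \phi_n$ to be surjective; as this holds for every $n \in \nat$, the morphism $C \to C^\calj$ is surjective as a map of $\nat$-graded $\kfield$-vector spaces. I expect the only genuine obstacle to be the verification that this square of projections commutes, that is, the identification of the induced map on the monomial models with $\rho_n$; everything else is a formal consequence of the coPBW isomorphisms and the defining property of the pushforward, and that verification itself reduces, as indicated, to the set-theoretic identity $\cals_\calj^{(n)} = \cals^{(n)} \cap \calj^{\times n}$.
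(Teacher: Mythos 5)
Your proof is correct and follows essentially the same route as the paper's: the same commutative diagram comparing $C_n \hookrightarrow \kfield[\cali^{\times n}] \twoheadrightarrow \kfield[\cals^{(n)}]$ with its $\calj$-counterpart, with the left square commuting by the construction of the pushforward morphism and the right square by the identity $\cals_\calj^{(n)} = \cals^{(n)} \cap \calj^{\times n}$, after which surjectivity of $g_n = \psi_n^{-1}\circ\rho_n\circ\phi_n$ is immediate. The only difference is that you spell out the basis-element verification of the right square's commutativity, which the paper leaves as ``naturality of the projection.''
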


\begin{proof}
For $n \in \nat$,
	the hypotheses ensure that there is a commutative diagram
	\[
		\xymatrix{
		C_n
		\ar@{^(->}[r]
		\ar[d]
		\ar@/^1pc/[rr]^\cong
		&
\kfield[\cali ^{\times n} ]
\ar@{->>}[r]
\ar@{->>}[d]
&
\kfield [\cals^{(n)}]
\ar@{->>}[d]
\\
C^\calj_n
\ar@{^(->}[r]
\ar@/_1pc/[rr]_\cong
&
\kfield[\calj ^{\times n} ]
\ar@{->>}[r]
&
\kfield [\cals_\calj^{(n)}].
		}
	\]
(The commutativity of the right hand square is a consequence of naturality of the projection and the commutativity of
the left hand square follows from the construction of the morphism.) The result follows.
\end{proof}

\section{Quadratic self-duality}
\label{sect:coalg_dual}

The concept of quadratic self-duality is important; for instance, it leads to a bijection between the set of pullback
quadratic coalgebras and the set of pushforward quadratic coalgebras of a quadratically self-dual coalgebra. Two flavours of self-duality
are considered here: the standard approach via vector-space duality and an approach in the admissible setting which allows the
finiteness hypotheses to be relaxed slightly.

\subsection{Self-duality via vector space duality}

To begin the discussion of quadratic self-duality, suppose that $\langle V; R \rangle $ is a quadratic coalgebra for
which Hypothesis \ref{hyp:ft} holds.

\begin{defn}
	The quadratic coalgebra $C:= \langle V; R \rangle $ is quadratically self-dual if there exists an isomorphism
$\phi : V \stackrel{\cong}{\rightarrow} V^*$ which induces an isomorphism of quadratic coalgebras
\[
	C= \langle V; R \rangle \stackrel{\cong} {\rightarrow} C^! = \langle V^* ; R^\perp \rangle.
\]
This condition is equivalent to the assertion that $\phi$ induces a commutative diagram
\[
	\xymatrix{
R
\ar@{^(->}[r]
\ar[d]_\cong
&
V^{\otimes 2}
\ar[d]^{\phi^{\otimes 2}}
\\
R^\perp
\ar@{^(->}[r]
&
(V^*)^{\otimes 2}.
}
\]
\end{defn}

\begin{prop}
\
\begin{enumerate}
	\item
	Let $\langle V; R\rangle $ be a quadratically self-dual coalgebra, where $\dim V$ is finite. Then
$2 \dim R = (\dim V) ^2$; in particular, $\dim V$ is even.
\item
A quadratic coalgebra $\langle V ; R \rangle $ which satisfies Hypothesis \ref{hyp:ft} is quadratically self-dual if and
only if $\langle V ; R \rangle^!$ is quadratically self-dual.
\end{enumerate}
\end{prop}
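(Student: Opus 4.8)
The plan is to treat the two parts separately: the first by a dimension count, the second by exploiting the contravariant equivalence furnished by quadratic duality together with the double-dual isomorphism of Proposition \ref{prop:duality_quadratic}.

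For the first part, I would start from the defining exact sequence for $R^\perp$, namely
\[
	0 \to R^\perp \to V^* \otimes V^* \to R^* \to 0,
\]
which exhibits $R^\perp$ as the kernel of the surjection dual to the inclusion $R \hookrightarrow V^{\otimes 2}$. Since $\dim V$ is finite, $\dim (V^* \otimes V^*) = (\dim V)^2$ and $\dim R^* = \dim R$, whence $\dim R^\perp = (\dim V)^2 - \dim R$. Quadratic self-duality provides an isomorphism $\phi^{\otimes 2}$ restricting to an isomorphism $R \xrightarrow{\cong} R^\perp$, so that $\dim R = \dim R^\perp$. Substituting gives $2 \dim R = (\dim V)^2$; in particular $(\dim V)^2$ is even, forcing $\dim V$ to be even. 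This part is routine, the only point requiring care being the identification of $\dim R^\perp$.

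For the second part, I would argue that self-duality is preserved by the duality functor $(-)^!$. Suppose $C = \langle V; R \rangle$ is self-dual, witnessed by $\phi : V \xrightarrow{\cong} V^*$ inducing $C \xrightarrow{\cong} C^!$. The double-dual isomorphism of Proposition \ref{prop:duality_quadratic}(2) supplies a canonical isomorphism $\eta : C \xrightarrow{\cong} (C^!)^!$, whose degree-one component is the canonical identification $V \xrightarrow{\cong} V^{**}$ (available since $V$ is of finite type in each degree, by Hypothesis \ref{hyp:ft}). I would then form the composite
\[
	\psi := \eta \circ \phi^{-1} : C^! \xrightarrow{\cong} C \xrightarrow{\cong} (C^!)^!,
\]
whose degree-one component $V^* \xrightarrow{\cong} V^{**}$ is an isomorphism of vector spaces. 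It remains to check that $\psi$ is a self-duality of $C^!$ in the sense of the definition, that is, that its degree-one part carries $R^\perp$ isomorphically onto $(R^\perp)^\perp$. This follows from the naturality of the two constituents: $\phi^{-1}$ carries $R^\perp$ onto $R$ (since $\phi$ carries $R$ onto $R^\perp$), while $\eta$, inducing a coalgebra isomorphism $C \to (C^!)^!$, carries $R$ onto $(R^\perp)^\perp$. The converse is then immediate, applying the forward implication with $C^!$ in place of $C$ and invoking $(C^!)^! \cong C$ once more.

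The main obstacle in the second part is precisely the verification that the transported isomorphism $\psi$ respects the subspace data defining $C^!$ as a quadratic coalgebra, rather than being merely an abstract coalgebra isomorphism; this is where one must unwind the definition of quadratic self-duality and invoke the naturality of the double-dual and orthogonal-complement constructions. Everything else is formal.
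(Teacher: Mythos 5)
Your proof is correct: the dimension count for part (1) and the transport of the self-duality isomorphism through the double-dual identification $C \cong (C^!)^!$ for part (2) are exactly the routine verifications intended. The paper itself offers no details (its proof reads simply ``Straightforward''), so your argument is the natural filling-in of the standard approach rather than a genuinely different route.
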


\begin{proof}
	Straightforward.
\end{proof}

\begin{exam}
	Let $\kfield$ be a field and $\cali$ be the set $\{ x, y \}$.
\begin{enumerate}
	\item
Consider the quadratic coalgebra $\langle \kfield [\cali]; R \rangle$, where $R$ is generated  by $[x]\otimes [y]$ and
$[y] \otimes [x]$. Writing $\eta_x, \eta_y$ for the dual basis of $\kfield [\cali]^* \cong \kfield^\cali$, $R^\perp$ is
generated by $\eta_x \otimes \eta_x, \eta_y \otimes \eta_y$. This quadratic coalgebra is not quadratically self-dual.
\item
The quadratic coalgebra $\langle \kfield [\cali]; T \rangle$, where $T$ is generated  by $[x] \otimes [x]$ and $[x ]
\otimes [y]$ has quadratic dual $\langle \kfield^\cali; T^\perp \rangle$, where $T^\perp $ is generated by $\eta_y
\otimes \eta_x$ and $\eta_y \otimes \eta_y$. The isomorphism $\phi$ defined by $x \mapsto \eta_y $ and $y \mapsto
\eta_x$
shows that $\langle \kfield [\cali]; T \rangle$ is quadratically self-dual.
\item
In \cite{mos}, the authors consider a $\cx$-algebra $A$ defined as the path algebra of a quiver; the algebra $A$ is quadratically self-dual \cite[page
1132]{mos}, hence dualizing gives a quadratically self-dual coalgebra.
\end{enumerate}
\end{exam}

\begin{prop}
	Let $C:= \langle V; R \rangle$ be a quadratic coalgebra which satisfies Hypothesis \ref{hyp:ft} and which is
quadratically self-dual. Then the dual quadratic algebra $C^*$ is isomorphic to the quadratic algebra $\{ V ; R \}$.
\end{prop}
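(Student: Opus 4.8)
The plan is to compute the dual quadratic algebra $C^* := \langle V; R \rangle^*$ explicitly in terms of the quadratic data $(V,R)$ and then to feed in the self-duality isomorphism, so that the statement reduces to transporting an isomorphism of coalgebras across the equivalence between quadratic coalgebras and quadratic algebras.

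First I would identify $C^*$. By the vector-space duality equivalence $(\ftquadgrcoalg)\op \cong \ftquadgralg$, the graded dual of $C$ is a quadratic algebra, and degreewise $(C^*)_1 \cong V^*$ and $(C^*)_2 \cong R^*$. The product $V^* \otimes V^* \rightarrow R^*$ is dual to the structure map $R \hookrightarrow V^{\otimes 2}$ of the coalgebra $C$, hence is the canonical surjection $V^* \otimes V^* \twoheadrightarrow R^*$, whose kernel is $R^\perp$ by definition. Therefore $C^* \cong \{ V^* ; R^\perp \}$. (Equivalently, this follows by dualizing the identification $\langle V; R \rangle^! \cong \{ V; R \}^*$ of Proposition \ref{prop:duality_quadratic}(1), using that duality is an equivalence under Hypothesis \ref{hyp:ft}.)

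Next I would pass through the \emph{covariant} correspondence $\langle V; R \rangle \leftrightarrow \{ V; R \}$ between quadratic coalgebras and quadratic algebras. Under this equivalence the quadratic dual coalgebra $C^! = \langle V^*; R^\perp \rangle$ corresponds precisely to the quadratic algebra $\{ V^* ; R^\perp \}$, which by the previous step is $C^*$. Finally I would invoke quadratic self-duality: the hypothesis supplies an isomorphism $\phi : V \stackrel{\cong}{\rightarrow} V^*$ inducing an isomorphism of quadratic coalgebras $C \cong C^!$. Applying the covariant correspondence to this coalgebra isomorphism yields an isomorphism of quadratic algebras $\{ V; R \} \cong \{ V^*; R^\perp \} = C^*$, which is the assertion.

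The only real care needed — and the point I would flag as the main obstacle — is to keep the \emph{contravariant} vector-space duality $C \mapsto C^*$ cleanly distinct from the \emph{covariant} correspondence $\langle V; R \rangle \leftrightarrow \{ V; R \}$, and to confirm that $\phi^{\otimes 2}$ carries $R$ onto $R^\perp$; but this last compatibility is exactly the commutative square in the definition of quadratic self-duality, so no further computation is required.
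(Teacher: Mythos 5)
Your proposal is correct and takes essentially the same approach as the paper: both arguments rest on the identification of the quadratic dual with the vector-space dual (Proposition \ref{prop:duality_quadratic}) combined with the self-duality isomorphism $C \cong C^!$. The only difference is where vector-space duality is applied --- the paper concludes $C \cong C^! \cong \{V;R\}^*$ and (implicitly) dualizes at the end, whereas you first compute $C^* \cong \{V^*;R^\perp\}$ and then transport the self-duality isomorphism through the covariant coalgebra--algebra correspondence; this is the same argument rearranged.
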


\begin{proof}
	Under the finiteness hypothesis, Proposition \ref{prop:duality_quadratic} implies that $C^!$ is dual to the
quadratic algebra $\{V;R\}$. The quadratic self-duality yields an isomorphism of quadratic coalgebras $C \cong C^!$,
which gives the result.
\end{proof}

\subsection{Strict self-duality}

In the case of dualizable admissible quadratic coalgebras, there is a strict version of quadratic self-duality, restricting the class of isomorphisms considered.

\begin{defn}
	Let $\cali$, $\cals \subset \cali^{\times 2}$ be sets and $C := \langle \cali; \cals , f \rangle $ be an
$(\cali, \cals)$-admissible quadratic coalgebra which is dualizable.

The coalgebra $C$ is strictly self-dual if there exists a bijection $\sigma : \cali \stackrel{\cong}{\rightarrow} \cali$
of sets such that
\begin{enumerate}
	\item
$\sigma$ restricts to a bijection $\sigma : \cals \stackrel{\cong}{\rightarrow} \cals '$;
\item
the function $f: \cals \times \cals' \rightarrow \kfield $ satisfies the identity
$
	f = f ^! \circ (\sigma \times \sigma).
$
\end{enumerate}
\end{defn}

\begin{prop}
Suppose that $\langle \cali; \cals ,f \rangle$ is dualizable and strictly self-dual with respect to $\sigma : \cali
\stackrel{\cong}{\rightarrow} \cali$, then $\kfield [\sigma] : \kfield [\cali]
\stackrel{\cong}{\rightarrow} \kfield [\cali]$ induces an isomorphism of quadratic coalgebras $\langle \cali; \cals , f
\rangle
\stackrel{\cong}{\rightarrow} \langle \cali; \cals' , f^! \rangle$.
\end{prop}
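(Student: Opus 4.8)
The plan is to verify the claim through the equivalence of Proposition \ref{prop:quadratic_coalgebras}, which reduces the problem to a statement about the relation subspaces. Writing $R \leq \kfield[\cali]^{\otimes 2}$ for the relation space of $\langle \cali; \cals, f\rangle$ and $T$ for that of $\langle \cali; \cals', f^!\rangle$ (consistently with Corollary \ref{cor:admissible_quadratic_duality}), a vector-space isomorphism $\kfield[\sigma]$ induces a morphism of quadratic coalgebras exactly when $\kfield[\sigma]^{\otimes 2}$ carries $R$ into $T$; because $\kfield[\sigma]$ is invertible (with inverse $\kfield[\sigma^{-1}]$), it suffices to prove the stronger equality $\kfield[\sigma]^{\otimes 2}(R) = T$, and the induced coalgebra map is then automatically an isomorphism. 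Under the identification $\kfield[\cali]^{\otimes 2} \cong \kfield[\cali \times \cali]$ of Lemma \ref{lem:proj-inj}, the operator $\kfield[\sigma]^{\otimes 2}$ is the basis permutation $\kfield[\tau]$ induced by $\tau := \sigma \times \sigma$, and the two clauses of strict self-duality should be read through this identification: clause (1) says $\tau(\cals) = \cals'$, whence, $\tau$ being a bijection of $\cali \times \cali$, also $\tau(\cals') = \cals$. I would also record at the outset that dualizability is precisely the condition making the defining inclusion of $T$ (an instance of (\ref{eqn:basic_admissible}) for the pair $(\cals', f^!)$) involve only finite sums, so that $\langle \cali; \cals', f^!\rangle$ is a genuine admissible quadratic coalgebra.

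First I would write down generators. By (\ref{eqn:basic_admissible}), $R$ is spanned by the vectors $[s] - \sum_{s' \in \cals'} f(s,s')[s']$ for $s \in \cals$, and $T$ by the vectors $[t] - \sum_{u \in \cals} f^!(t,u)[u] = [t] - \sum_{u \in \cals} f(u,t)[u]$ for $t \in \cals'$. Applying $\kfield[\tau]$ to the generator of $R$ indexed by $s$ gives $[\tau(s)] - \sum_{s' \in \cals'} f(s,s')[\tau(s')]$; here $\tau(s) \in \cals'$ by clause (1), and substituting $u = \tau(s')$ (which ranges over $\tau(\cals') = \cals$) rewrites this as $[\tau(s)] - \sum_{u \in \cals} f(s, \tau^{-1}(u))[u]$.

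The crux is matching coefficients with the generator of $T$ indexed by $t := \tau(s) \in \cals'$, and this is exactly where clause (2) is needed. Setting $a = \tau^{-1}(t) = s \in \cals$ and $b = \tau^{-1}(u) \in \cals'$, the identity $f = f^! \circ (\sigma \times \sigma)$ evaluated at $(a,b)$ gives $f(s, \tau^{-1}(u)) = f(a,b) = f^!(\tau(a), \tau(b)) = f^!(t, u) = f(u,t)$, so the image of the generator of $R$ indexed by $s$ is precisely the generator of $T$ indexed by $t$. As $s$ runs over $\cals$, $\tau(s)$ runs bijectively over $\cals'$, so every generator of $T$ arises and $\kfield[\tau](R) = T$, completing the argument. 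The computation itself is short; I expect the only real obstacle to be the index bookkeeping, namely holding the transpose convention $f^!(s',s) = f(s,s')$ fixed while tracking how $\tau$ and $\tau^{-1}$ interchange the complementary sets $\cals$ and $\cals'$. Once the substitutions are laid out correctly, clause (2) closes the argument in one line.
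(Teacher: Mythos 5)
Your proof is correct and is exactly the routine verification the paper leaves implicit (its proof is simply ``Clear.''): identify $\kfield[\sigma]^{\otimes 2}$ with the permutation of $\kfield[\cali\times\cali]$ induced by $\sigma\times\sigma$, and use the two clauses of strict self-duality together with dualizability to check that the spanning vectors of $R$ from (\ref{eqn:basic_admissible}) map bijectively onto those of the relation space of $\langle \cali;\cals',f^!\rangle$. The index bookkeeping, including the use of clause (2) at the pair $(s,\tau^{-1}(u))$, is handled correctly, so nothing is missing.
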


\begin{proof}
 Clear.
\end{proof}

\subsection{Quadratic duality, pushforward and pullback}
\label{subsect:quadratic_push_pull}

Suppose that $\langle V; R \rangle $ satisfies Hypothesis \ref{hyp:ft}. Consider a subspace $W \leq V$, which defines
the restriction $R_W$ and the commutative diagram
\[
	\xymatrix{
R_W
\ar@{^(->}[r]
\ar@{^(->}[d]
&
W^{\otimes 2}
\ar@{->>}[r]
\ar@{^(->}[d]
&
W^{\otimes 2}/R_W
\ar@{^(->}[d]
\\
R
\ar@{^(->}[r]
&
V^{\otimes 2}
\ar@{->>}[r]
&
V^{\otimes 2}/R,
}
\]
in which the right hand vertical morphism is a monomorphism, since the left hand square is a pullback.

Vector space duality yields the diagram
\[
	\xymatrix{
R^\perp
\ar@{^(->}[r]
\ar@{->>}[d]
&
(V^*)^{\otimes 2}
\ar@{->>}[r]
\ar@{->>}[d]
&
R^*
\ar@{->>}[d]
\\
R^\perp_W
\ar@{^(->}[r]
&
(W^*)^{\otimes 2}
\ar@{->>}[r]
&
R_W^*
}
\]
and hence a canonical morphism of quadratic coalgebras
\[
	\langle V; R \rangle ^! = \langle V^* ; R^\perp \rangle
\rightarrow
\langle W^* ; R_W^\perp \rangle = \langle W; R_W \rangle ^!.
\]

\begin{prop}
\label{prop:self-dual-reciprocity}
	Let $\langle V; R \rangle $ be a quadratic coalgebra which satisfies Hypothesis \ref{hyp:ft} and which is
quadratically self-dual.
\begin{enumerate}
	\item
For $W \leq V$, the quadratic duality $\phi : V \stackrel{\cong}{\rightarrow} V^*$ induces  a canonical surjection of
quadratic coalgebras
\[
	\langle V; R\rangle
\rightarrow
\langle W; R_W \rangle ^!
\]
with underlying morphism $V\stackrel{\phi}{\rightarrow} V^* \rightarrow W^*$.
\item
For $V \twoheadrightarrow Z$, the quadratic duality $\phi$ induces  a canonical monomorphism of quadratic coalgebras
\[
	\langle Z; R^Z \rangle ^!
\hookrightarrow
\langle V; R \rangle
\]
with underlying morphism $Z ^* \rightarrow V^* \stackrel{\phi^{-1}}{\rightarrow} V$.
\item
These constructions are mutually inverse and define a bijection between the set of  pullback quadratic coalgebras of $\langle V;
R \rangle$ and the set of pushforward quadratic  coalgebras of $\langle V; R \rangle$.
\end{enumerate}
\end{prop}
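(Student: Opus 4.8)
The plan is to exploit the self-duality isomorphism $\phi : V \stackrel{\cong}{\rightarrow} V^*$ to transport the general pullback/pushforward constructions of Section \ref{subsect:pushforward_pullback} through the quadratic dual, and then to check that the two resulting assignments are mutually inverse. For part (1), I would start from the canonical morphism of quadratic coalgebras $\langle V; R\rangle^! \rightarrow \langle W; R_W\rangle^!$ constructed just before the statement (arising from the dualized diagram with surjective vertical arrows). Precomposing with the self-duality isomorphism $C \cong C^! = \langle V^*; R^\perp\rangle$ gives a morphism $\langle V; R\rangle \rightarrow \langle W; R_W\rangle^!$ whose underlying degree-one map is $V \stackrel{\phi}{\rightarrow} V^* \twoheadrightarrow W^*$, the second arrow being the dual of the inclusion $W \hookrightarrow V$. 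Since $\phi$ is an isomorphism and $V^* \twoheadrightarrow W^*$ is surjective, the composite on cogenerators is surjective; because both source and target are quadratic, this suffices to conclude that the morphism is an epimorphism of quadratic coalgebras, as asserted.

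For part (2), I would run the formally dual argument. Given $V \twoheadrightarrow Z$, the general pushforward construction yields an inclusion $\langle Z; R^Z\rangle \hookrightarrow \langle V; R\rangle$ at the level of quadratic algebras, and dualizing produces a monomorphism $\langle Z; R^Z\rangle^! \hookrightarrow \langle V; R\rangle^!$ on cogenerators given by $Z^* \hookrightarrow V^*$. Postcomposing with $\phi^{-1} : V^* \stackrel{\cong}{\rightarrow} V$ and using the self-duality identification $C^! \cong C$ delivers the desired monomorphism with underlying map $Z^* \rightarrow V^* \stackrel{\phi^{-1}}{\rightarrow} V$. Here the key point is that $\phi$ being an isomorphism interchanges monomorphisms on cogenerators with epimorphisms on cogenerators, which is exactly what converts a pushforward datum into a pullback datum.

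The heart of the proof, and the step I expect to require the most care, is part (3): the claim that these two constructions are mutually inverse bijections between pullbacks and pushforwards. The idea is that $\phi$ sets up a bijection between subspaces $W \leq V$ and quotients $V \twoheadrightarrow Z$ via $W \mapsto V^*/W^\perp$ (equivalently via the annihilator correspondence transported through $\phi$), and under Hypothesis \ref{hyp:ft} this correspondence is an involution because $\phi$ is an isomorphism of finite-type graded spaces. Concretely, I would show that starting from $W \leq V$, forming $\langle W; R_W\rangle^!$ as a pushforward quotient $\langle V; R\rangle \twoheadrightarrow \langle W; R_W\rangle^!$, and then applying the construction of part (2) to the corresponding surjection of cogenerators returns the original pullback $\langle W; R_W\rangle$, and symmetrically in the other direction. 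The main obstacle is verifying that the relation $R_W = R \cap W^{\otimes 2}$ is carried to the image relation $R^\perp{}^{Z}$ (and back) compatibly with $\phi^{\otimes 2}$; this reduces, via the commuting squares displayed before the statement together with the self-duality square defining $\phi$, to the finite-dimensional linear-algebra fact that intersection with a subspace and image in a quotient are interchanged under the perp operation, which holds under Hypothesis \ref{hyp:ft}. Once this compatibility is established, the mutual-inverse statement and hence the bijection follow by a diagram chase.
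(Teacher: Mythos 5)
Your parts (1) and (2) coincide with the paper's own proof: the paper builds the canonical morphism $\langle V; R\rangle^! \rightarrow \langle W; R_W\rangle^!$ from the dualized diagram displayed just before the statement and precomposes with the self-duality isomorphism $C \cong C^!$, with (2) obtained by the formally dual argument. One small caveat: your stated reason for epimorphy in (1) (surjectivity on cogenerators) only gives a categorical epimorphism; what makes the target a genuine pushforward — and what part (3) actually needs — is the exact identification of $R_W^\perp$ with the image of $R^\perp$, which comes from the surjective vertical arrows of the dualized diagram that you cite.

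The genuine gap is in part (3), which the paper's one-line proof passes over in silence. You claim the subspace/quotient correspondence transported through $\phi$ is an involution ``because $\phi$ is an isomorphism of finite-type graded spaces''; that inference is not valid. Track the indexing data: construction (1) sends $W \leq V$ to the quotient of $V$ with kernel $\phi^{-1}(W^\perp)$, and construction (2) sends a quotient with kernel $K$ to the subspace $\phi^{-1}(K^\perp)$, so the round trip is $W \mapsto \phi^{-1}\bigl((\phi^{-1}(W^\perp))^\perp\bigr) = \phi^{-1}(\phi^*(W))$, where $\phi^* : V \cong V^{**} \rightarrow V^*$ is the transpose of $\phi$. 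Finite type gives the double-annihilator identity $W^{\perp\perp} = W$, but it does not make $\phi^{-1}\phi^*$ the identity: that holds for every $W$ only if $\phi^* = \lambda \phi$, i.e.\ only if the bilinear form $(x,y) \mapsto \phi(x)(y)$ is reflexive, which is not part of the hypothesis of quadratic self-duality (already $\phi = \left(\begin{smallmatrix} 1 & 1 \\ 0 & 1 \end{smallmatrix}\right)$ on a two-dimensional space moves the line spanned by the first basis vector under this round trip). To close the argument you should instead either interpret ``mutually inverse'' up to the canonical isomorphism $(C^!)^! \cong C$ of Proposition \ref{prop:duality_quadratic} — the composite of the two constructions applied to $\langle W; R_W\rangle$ returns $(\langle W; R_W\rangle^!)^!$ — or observe that each construction is separately a bijection, being the annihilator bijection (finite type) composed with the bijection on subspaces induced by $\phi^{\pm 1}$; literal mutual-inverseness on sub/quotient data requires the extra compatibility $\phi^* = \phi$ up to scalar, which does hold in the paper's application (strict self-duality via the involution $\sigma$, as for $\bialg$) but not in the stated generality.
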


\begin{proof}
	The construction of the associated quotient is outlined above and the second
construction is formally dual.
\end{proof}

In the case of admissible quadratic coalgebras, the above result has an analogue.

\begin{prop}
\label{prop:admissible-reciprocity}
	Let $\langle \kfield [\cali]; R \rangle \cong \langle \cali; \cals ,f  \rangle$ be a dualizable $(\cali,
\cals)$-admissible quadratic coalgebra, which is strictly self-dual with respect to the bijection $\sigma : \cali
\stackrel{\cong}{\rightarrow} \cali$. The following conditions on a set $\calj \subset \cali$ are equivalent:
\begin{enumerate}
	\item
the pullback quadratic coalgebra $\langle \kfield [\cali]; R_{\kfield [\calj]} \rangle $ is $(\calj,\cals_\calj )
$-admissible;
\item
the pushforward quadratic coalgebra $\langle \kfield [\cali]; R^{\kfield [\sigma (\calj)]} \rangle $ is $(\calj,
\cals_{\sigma (\calj)})$ admissible.
\end{enumerate}
\end{prop}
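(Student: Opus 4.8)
The plan is to reduce both admissibility conditions to explicit vanishing conditions on the structure function $f$, and then to match these using the strict self-duality identity. First I would apply Proposition \ref{prop:admissible_pullback} to rewrite condition (1): the pullback $\langle \kfield[\cali]; R_{\kfield[\calj]}\rangle$ is $(\calj,\cals_\calj)$-admissible if and only if $f(s,s')=0$ for all $s\in\cals_\calj$ and all $s'\in\cals'\setminus\cals'_\calj$. Dually, applying Proposition \ref{prop:admissible_pushforward} with the subset $\sigma(\calj)\subset\cali$ rewrites condition (2): the pushforward to $\sigma(\calj)$ is $(\sigma(\calj),\cals_{\sigma(\calj)})$-admissible if and only if $f(s,s')=0$ for all $s\in\cals\setminus\cals_{\sigma(\calj)}$ and all $s'\in\cals'_{\sigma(\calj)}$.

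The heart of the argument is then the bijection $\sigma\times\sigma$ of $\cali\times\cali$. Because $\sigma$ restricts to a bijection $\cals\to\cals'$ and $\cals\amalg\cals'=\cali\times\cali$, the map $\sigma\times\sigma$ interchanges $\cals$ and $\cals'$. Combined with $(\sigma\times\sigma)(\calj^{\times 2})=\sigma(\calj)^{\times 2}$, this yields the two set-theoretic identities $(\sigma\times\sigma)(\cals_\calj)=\cals'_{\sigma(\calj)}$ and $(\sigma\times\sigma)(\cals'\setminus\cals'_\calj)=\cals\setminus\cals_{\sigma(\calj)}$, which I would verify directly by intersecting with $\cals$, with $\cals'$, and with $\sigma(\calj)^{\times 2}$.

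Finally I would invoke the strict self-duality identity $f=f^!\circ(\sigma\times\sigma)$, which unwinds (using $f^!(a',a)=f(a,a')$ together with the fact that $\sigma\times\sigma$ swaps $\cals$ and $\cals'$) to $f(s,s')=f\big((\sigma\times\sigma)(s'),(\sigma\times\sigma)(s)\big)$ for $s\in\cals$, $s'\in\cals'$. Since $(s,s')\mapsto\big((\sigma\times\sigma)(s'),(\sigma\times\sigma)(s)\big)$ is a bijection of $\cals\times\cals'$ which, by the identities above, carries $\cals_\calj\times(\cals'\setminus\cals'_\calj)$ onto $(\cals\setminus\cals_{\sigma(\calj)})\times\cals'_{\sigma(\calj)}$, the vanishing of $f$ on the first product set is equivalent to its vanishing on the second. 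This is precisely the equivalence of conditions (1) and (2). Conceptually this is the combination of Corollary \ref{cor:admissible_quadratic_duality}, which relates the pullback to $\calj$ with the pushforward of the transpose dual, with the strict self-duality isomorphism $\kfield[\sigma]$ identifying the transpose dual with the original coalgebra.

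I expect the main obstacle to be the bookkeeping of the index sets under $\sigma\times\sigma$: keeping straight in which copy of $\cals$ or $\cals'$ each coordinate lives and confirming that the relevant product of index sets is carried \emph{exactly} onto the product required by Proposition \ref{prop:admissible_pushforward}, rather than onto a larger or smaller set. Care is also needed because the transpose-dual/relabelling route of Corollary \ref{cor:admissible_quadratic_duality} naturally produces the subset $\sigma^{-1}(\calj)$; one must check that the self-duality identity forces the condition for $\sigma^{-1}(\calj)$ and that for $\sigma(\calj)$ to coincide, so that the statement may be phrased with $\sigma(\calj)$ as above.
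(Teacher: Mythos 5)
Your proposal is correct and follows essentially the same route as the paper's proof: both reduce conditions (1) and (2) to vanishing conditions on $f$ via Propositions \ref{prop:admissible_pullback} and \ref{prop:admissible_pushforward}, and then identify these conditions using the strict self-duality identity $f(s,s')=f(\sigma(s'),\sigma(s))$ together with the fact that $\sigma\times\sigma$ carries $\cals_\calj$ onto $\cals'_{\sigma(\calj)}$ and $\cals'\setminus\cals'_\calj$ onto $\cals\setminus\cals_{\sigma(\calj)}$. Your closing worry about $\sigma^{-1}(\calj)$ versus $\sigma(\calj)$ is harmless: the direct computation you outline lands on $\sigma(\calj)$ exactly as in the paper (and both variants are in fact equivalent, since $f$ is invariant under the diagonal action of $(\sigma\times\sigma)^2$).
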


\begin{proof}
The result follows directly  from Corollary \ref{cor:admissible_quadratic_duality}; it is nevertheless  worthwhile to make the key step of the argument explicit.

By strict duality,  $f (s, s') = f^! (\sigma (s), \sigma(s'))  = f (\sigma (s'), \sigma(s))$, hence the following two conditions are equivalent:
\begin{itemize}
	\item
$f(s, s') = 0$ for all $s \in \cals_\calj$ and $s' \in \cals' \backslash \cals'_\calj$;
\item
$f(s, s') = 0$ for all $s  \in \cals \backslash \cals_{\sigma (\calj)}$ and $s' \in \cals'_{\sigma (\calj)}$,
\end{itemize}
since $\sigma$ induces a bijection between $\cals$ and $\cals'$.

The result  follows by applying Propositions \ref{prop:admissible_pullback} and \ref{prop:admissible_pushforward}.
\end{proof}

\part{Applications related to the Steenrod algebra}
\section{Bialgebras associated to additive polynomials over $\field$}
\label{sect:bialgebras}

\subsection{Graded bialgebras}

The following notion corresponds to that of a coalgebra with products introduced by Singer in \cite[Definition 2.1]{singer_hopf_ops}, although a simpler terminology
is preferred here, working over $\field$.

\begin{defn}
	The category of $\nat$-graded bialgebras over $\field$ is the category of $\nat$-graded counital comonoids in the symmetric monoidal category of commutative graded $\field$-algebras.
\end{defn}

\begin{defn}
	An $\nat$-graded bialgebra $B$ is quadratic if the underlying $\nat$-graded coalgebra is quadratic.
\end{defn}

\subsection{Monoids associated to additive polynomials over $\field$}

The terminology graded monoid is used to denote an $\nat$-graded monoid, so that the monoid is given by sets $\{ M_n | n \in \nat \}$
and the product has the form $M_i \times M_j \rightarrow M_{i+j}$. All monoids considered will be unital (with unit in $M_0$). Here, a graded monoid functor is a functor from commutative $\field$-algebras to graded monoids.

\begin{defn}\
\begin{enumerate}
	\item
	Let $\monoid $ denote the graded monoid functor defined on a commutative $\field$-algebra $A$ by
\[
\monoid_n (A) :=
	\{
\sum _{i=0} ^n \alpha_i x^{2^i} | \alpha_0=1 ,  \alpha_n \in A ^\times
\},
\]
with product  given by the composition of polynomials $(f, g ) \mapsto  f\circ g$ and  unit $x \in \monoid_0 (A)$.
\item
Let $\monoid^+$ denote the graded monoid functor defined by
\[
\monoid_n ^+(A) :=
	\{
\sum _{i=0} ^n \alpha_i x^{2^i} | \alpha_0=1 \},
\]
with product given by composition,
equipped with the monomorphism of monoid functors: $\monoid \hookrightarrow \monoid^+$ which forgets the invertibility
condition.
\end{enumerate}
\end{defn}

\begin{prop}
\label{prop:rep-graded-monoids}
\
\begin{enumerate}
	\item
The  graded monoid functor $\monoid$ is represented by the connected graded bialgebra $\bialg$, where
\[
	\bialg_n = \field [\alpha_1, \ldots , \alpha_{n-1}, \alpha_n^{\pm 1} ]
\]
and $\monoid_n (\bialg_n)$ contains the universal element $\sum _{i=0} ^n \alpha_i x^{2^i}$.
\item
For each $n \in \nat$, the commutative algebra $\bialg_n$ is graded, where $|\alpha_i| = 1- 2^i$, so that the expression
$\Sigma_i \alpha_i x^{2^i}$ is homogeneous of degree $1$, when $x$ is given degree $1$.
\item
The inclusion of graded monoid functors $\monoid \hookrightarrow \monoid^+$ is represented by a sub graded bialgebra
$\bialg^+\hookrightarrow \bialg$, where $\bialg_n^+ \cong \field[\alpha_1, \ldots, \alpha_n]$.
\item
For each $n \in \nat$, the algebra $\bialg^+_n$ is a graded coconnected algebra (concentrated in negative degrees) and
$\bialg_n \cong \bialg^+_n [\alpha_n^{-1}]$.
\end{enumerate}
\end{prop}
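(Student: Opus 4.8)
The plan is to obtain parts (1) and (3) from the functor-of-points formalism, reading off the bialgebra structure by Yoneda, and then to deduce the grading assertions (2) and (4) from the explicit formula for composition of additive polynomials.

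First I would check that $\monoid$ and $\monoid^+$ are genuine graded monoid functors. Over $\field$ a polynomial $\sum_{i=0}^n \alpha_i x^{2^i}$ is additive, the Frobenius being additive in characteristic $2$; composing $f \in \monoid_i(A)$ with $g \in \monoid_j(A)$ and using $\big(\sum_l \beta_l x^{2^l}\big)^{2^k} = \sum_l \beta_l^{2^k} x^{2^{k+l}}$ gives
\[
	f \circ g = \sum_{m=0}^{i+j} \Big( \sum_{k+l = m} \alpha_k \beta_l^{2^k} \Big) x^{2^m},
\]
whose linear coefficient is $\alpha_0 \beta_0 = 1$ and whose top coefficient is $\alpha_i \beta_j^{2^i}$, a product of units. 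Hence $f \circ g \in \monoid_{i+j}(A)$, and dropping the invertibility condition gives the corresponding closure for $\monoid^+$; associativity and unitality are inherited from composition of polynomials.

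Unwinding the definitions, $\monoid_n(A)$ is the set of tuples $(\alpha_1, \ldots, \alpha_{n-1}, \alpha_n)$ with $\alpha_i \in A$ and $\alpha_n \in A^\times$, which is naturally $\mathrm{Hom}_{\field}(\field[\alpha_1, \ldots, \alpha_{n-1}, \alpha_n^{\pm 1}], A)$, with universal element the universal polynomial; likewise $\monoid^+_n$ is represented by $\field[\alpha_1, \ldots, \alpha_n]$. Since the coproduct of commutative $\field$-algebras is the tensor product, $\monoid_i \times \monoid_j$ is represented by $\bialg_i \otimes \bialg_j$, so Yoneda converts the monoid product into a comultiplication $\Delta \colon \bialg_{i+j} \to \bialg_i \otimes \bialg_j$, with coassociativity, counitality and connectedness ($\bialg_0 = \field$) forced by the monoid axioms. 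The displayed formula gives $\Delta(\gamma_m) = \sum_{k+l=m} \alpha_k \otimes \beta_l^{2^k}$ on coefficients; note that $\Delta(\gamma_{i+j}) = \alpha_i \otimes \beta_j^{2^i}$ is invertible, which is exactly what makes $\Delta$ well defined on $\bialg$ after inverting the top coefficient.

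For part (2) I would set $|\alpha_i| := 1 - 2^i$, so that $\alpha_i x^{2^i}$ has degree $(1 - 2^i) + 2^i = 1$ and the universal polynomial is homogeneous of degree $1$. The one point requiring real verification — and the main obstacle — is that $\Delta$ preserves this internal grading: each summand $\alpha_k \otimes \beta_l^{2^k}$ of $\Delta(\gamma_m)$ must have degree $|\gamma_m| = 1 - 2^m$, which holds by the identity $(1 - 2^k) + 2^k(1 - 2^l) = 1 - 2^{k+l}$, the factor $2^k$ arising precisely because the Frobenius twist $\beta_l \mapsto \beta_l^{2^k}$ scales internal degree by $2^k$. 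As Frobenius is a $\field$-algebra endomorphism, $\Delta$ is a morphism of graded algebras. Finally, for (3) the inclusion $\monoid \hookrightarrow \monoid^+$ forgetting invertibility is a morphism of monoid functors represented by the localization $\field[\alpha_1, \ldots, \alpha_n] \hookrightarrow \field[\alpha_1, \ldots, \alpha_{n-1}, \alpha_n^{\pm 1}]$, exhibiting $\bialg^+$ as a sub-bialgebra; and for (4) the generators $\alpha_i$ ($i \geq 1$) have strictly negative degree $1 - 2^i$, so $\bialg^+_n$ is concentrated in non-positive degrees with degree-zero part $\field$ (coconnected), while $\bialg^+_n[\alpha_n^{-1}] = \bialg_n$ is immediate.
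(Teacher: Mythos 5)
Your proof is correct and follows essentially the same route as the paper, whose proof is simply ``Straightforward'' together with the single key observation---which you make explicit via Yoneda---that the image of the top coefficient under each diagonal $\Delta_{i,j}$ is a unit, so the coproduct is defined on the localization $\bialg_n = \bialg_n^+[\alpha_n^{-1}]$. The only discrepancy is notational: your formula $\Delta(\gamma_m)=\sum_{k+l=m}\alpha_k\otimes\beta_l^{2^k}$ is the tensor-flip (co-opposite) of the convention the paper fixes in the subsequent remark, where $\Delta_{1,1}(\alpha_2)=\alpha_1^2\otimes\alpha_1$ (i.e.\ the Frobenius twist lands in the first factor); this is harmless, since the statement of the proposition does not depend on that choice.
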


\begin{proof}
	Straightforward. (A key point is that the image of $\alpha_n \in \bialg_n$ under a diagonal $\Delta_{i, j}$ is an
invertible element
	of $\bialg_i \otimes \bialg_j$.)
\end{proof}

\begin{rem}
\
\begin{enumerate}
\item
	To fix the conventions used in defining the bialgebra structure, consider $\Delta_{1,1} : \bialg_2 \rightarrow
\bialg_1 \otimes \bialg_1$, which is the morphism of $\field$-algebras $\field [ \alpha_1 , \alpha_2 ^{\pm 1}]$ which is
defined by
\begin{eqnarray*}
\alpha_1 & \mapsto & 1 \otimes \alpha_1 + \alpha _1 \otimes 1 \\
\alpha_2 & \mapsto & \alpha_1 ^2 \otimes \alpha_1 .
\end{eqnarray*}

\item
	The bialgebra $\bialg^+$  determines the bialgebra $\bialg$ by the localization which corresponds to inverting
the element
	$\alpha_n$ of $\bialg_n$, for each $n$.
\item
For each $n$, the algebra $\bialg^+_n$ is of finite type; for $n\geq 2$, this is not true of $\bialg_n$.
\end{enumerate}
\end{rem}

\subsection{Group completion}

There is a familiar ungraded monoid functor, which can be interpreted as having underlying functor $\monoid_\infty^+$, by passing to  formal power series:

\begin{defn}
Let $\monoid_\infty^+$ be the monoid functor defined by
\[
	\monoid_\infty^+ (A )= \{ \sum _{i=0} ^\infty \alpha_i x^{2^i} | \alpha_0=1 \},
\]
with product given by the composition of formal power series.
\end{defn}

\begin{rem}
	This is  a group functor, since composition inverses can be constructed working with formal power
series.
\end{rem}

The following is well-known:

\begin{prop}
	The functor $\monoid_\infty^+$ is represented by the Hopf algebra $\dualst \cong \field [\xi_i | i \geq 0, \xi_0
=1] $, the dual Steenrod algebra.
\end{prop}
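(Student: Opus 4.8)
The plan is to use the functor-of-points method: first exhibit a natural bijection of the underlying set-valued functors, and then verify that it respects the monoid structures, the latter being exactly Milnor's coproduct formula. By Milnor's description of the dual Steenrod algebra, $\dualst$ is the free commutative $\field$-algebra on generators $\xi_i$, $i \geq 1$ (with the convention $\xi_0 = 1$). Hence a morphism of $\field$-algebras $\phi : \dualst \to A$ is determined freely by the images $\alpha_i := \phi(\xi_i)$, $i \geq 1$, and, setting $\alpha_0 := 1$, this gives a bijection
\[
\mathrm{Hom}_{\field\text{-alg}}(\dualst, A) \xrightarrow{\cong} \monoid_\infty^+(A), \qquad \phi \mapsto \sum_{i \geq 0} \phi(\xi_i)\, x^{2^i},
\]
natural in the commutative $\field$-algebra $A$. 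Equivalently, the universal element is the tautological series $\xi(x) := \sum_{i \geq 0} \xi_i x^{2^i} \in \monoid_\infty^+(\dualst)$, which corresponds to the identity of $\dualst$.

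The content of the proposition is that this bijection carries the composition product on $\monoid_\infty^+(A)$ to the product on $\mathrm{Hom}_{\field\text{-alg}}(\dualst, A)$ induced by the comultiplication of $\dualst$. To check this I would argue universally over $\dualst \otimes \dualst$: writing $\xi'(x)$ and $\xi''(x)$ for the images of $\xi(x)$ under the two coprojections $a \mapsto a \otimes 1$ and $a \mapsto 1 \otimes a$, additivity of the Frobenius over $\field$ gives
\[
\xi''\bigl(\xi'(x)\bigr) = \sum_{j \geq 0} \xi_j'' \Bigl( \sum_{i \geq 0} \xi_i'\, x^{2^i}\Bigr)^{2^j} = \sum_{n \geq 0}\Bigl(\sum_{i+j=n} (\xi_i')^{2^j}\,\xi_j''\Bigr)\, x^{2^n},
\]
so that the $n$-th coefficient of the composite $\xi'' \circ \xi'$ is $\sum_{i+j=n} \xi_i^{2^j} \otimes \xi_j$. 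This is precisely Milnor's coproduct $\Delta(\xi_n)$, and is the infinite analogue of the conventions recorded for $\bialg$ after Proposition \ref{prop:rep-graded-monoids} (for instance $\alpha_2 \mapsto \alpha_1^2 \otimes \alpha_1$). By the Yoneda lemma, this identity of universal elements says exactly that the displayed bijection is an isomorphism of monoid functors.

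To finish, I would identify the remaining structure: the unit series $x$ corresponds to the augmentation $\xi_i \mapsto 0$ ($i \geq 1$), which is the counit; and since composition inverses of such series exist (as already remarked), $\monoid_\infty^+$ is a group functor whose inverse supplies the antipode, so $\dualst$ is the representing Hopf algebra. The one point needing genuine care is the bookkeeping of the order of composition against the convention for $\Delta$; I expect the main obstacle to be confirming that the coefficient computation reproduces Milnor's diagonal rather than its opposite, which is exactly what the explicit expansion above settles.
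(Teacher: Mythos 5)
Your proof is correct. For comparison: the paper gives no argument at all here --- the proposition is prefaced by ``The following is well-known'' and is simply a functor-of-points restatement of Milnor's theorem --- so your proposal supplies the verification that the paper omits, and it does so along the standard lines. Your key expansion is right: over $\field$ the Frobenius is additive, so
\[
\xi''\bigl(\xi'(x)\bigr)=\sum_{n\geq 0}\Bigl(\sum_{i+j=n}(\xi_i')^{2^j}\xi_j''\Bigr)x^{2^n},
\]
and the coefficient $\sum_{i+j=n}\xi_i^{2^j}\otimes\xi_j$ is Milnor's diagonal; this is moreover consistent with the convention the paper records for $\bialg$ after Proposition \ref{prop:rep-graded-monoids}, whose bidegree $(1,1)$ component is exactly $\alpha_2\mapsto\alpha_1^{2}\otimes\alpha_1$. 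The bookkeeping point you flag at the end is genuine but harmless: with the usual pairing of the first Cartesian factor to the left-hand tensor factor, Milnor's diagonal represents the product $(f,g)\mapsto g\circ f$ rather than $f\circ g$; the two choices give co-opposite coproducts, and for a commutative Hopf algebra the antipode (here conjugation, i.e.\ compositional inversion of series) is an isomorphism onto the co-opposite, so the representing Hopf algebra is $\dualst$ under either convention. Your identification of the counit with the augmentation and of the antipode with series inversion, using that $\monoid_\infty^+$ is a group functor, correctly completes the Hopf-algebra structure.
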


\begin{nota}
	Let $\catmonoid$ denote the category of monoids (with unit) and $\grcatmon$ the category of $\nat$-graded
monoids (with unit).
\end{nota}

There is a forgetful functor $\calo :\grcatmon \rightarrow \catmonoid$, which sends a graded monoid $(M_n| n \in \nat)$
to the monoid $\amalg_n M_n$.

\begin{prop}
	The functor $\calo :\grcatmon \rightarrow \catmonoid$ admits a right adjoint, $\gamma : \catmonoid \rightarrow
\grcatmon$, which associates to a monoid $N$ the graded monoid $\gamma N$ with $(\gamma N)_s = N$ for all $s$ and
structure morphisms induced by the product of $N$.
\end{prop}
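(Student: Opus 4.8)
The plan is to produce, for a graded monoid $M$ and a monoid $N$, a bijection
\[
\mathrm{Hom}_{\catmonoid}(\calo M, N) \cong \mathrm{Hom}_{\grcatmon}(M, \gamma N)
\]
natural in both variables; this exhibits $\gamma$ as right adjoint to $\calo$, so that $\calo \dashv \gamma$.

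First I would check that $\gamma N$ is a well-defined object of $\grcatmon$ and that $\gamma$ is a functor. The structure maps $(\gamma N)_i \times (\gamma N)_j = N \times N \to N = (\gamma N)_{i+j}$ are all given by the multiplication of $N$, so associativity and unitality---with unit $1_N \in N = (\gamma N)_0$---are inherited directly from the monoid structure of $N$; functoriality is immediate, since a monoid homomorphism $N \to N'$ induces the same map in each degree.

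The core of the argument is the identification of the two Hom-sets by unpacking definitions. Since a map out of a disjoint union is determined componentwise, a morphism of monoids $\calo M = \amalg_n M_n \to N$ is the same as a family of set maps $\phi_n : M_n \to N$; the homomorphism condition reads $\phi_{i+j}(x \cdot y) = \phi_i(x) \cdot \phi_j(y)$ for $x \in M_i$, $y \in M_j$ (because the product of $\calo M$ restricted to $M_i \times M_j$ is the graded product landing in $M_{i+j}$), together with $\phi_0(1) = 1_N$. On the other side, a morphism of graded monoids $M \to \gamma N$ is a family $\psi_n : M_n \to (\gamma N)_n = N$ compatible with the graded products and units, and since the product on $\gamma N$ is the multiplication of $N$, this is exactly the same system of equations. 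Hence the assignment $(\phi_n) \mapsto (\psi_n := \phi_n)$ is the desired bijection, visibly natural in $M$ (by precomposition) and in $N$ (by postcomposition).

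There is no genuine obstacle here---the content is bookkeeping about coproducts---but the step that most repays care is the treatment of degree zero and the unit: one must confirm that the unit of $\calo M$ (the unit of $M$, lying in $M_0$) corresponds under the family $(\phi_n)$ to the unit $1_N$ of $\gamma N$ in degree $0$, so that the unit-preservation conditions on the two sides genuinely coincide. With this matched, the bijection respects all of the structure and the adjunction follows.
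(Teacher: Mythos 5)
Your proof is correct and follows exactly the direct verification that the paper leaves to the reader (its proof is simply ``Straightforward''): unwinding a monoid map out of the coproduct $\amalg_n M_n$ into a family of componentwise maps and observing that the homomorphism and unit conditions coincide with those for a graded map into $\gamma N$. Nothing further is needed.
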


\begin{proof}
	Straightforward.
\end{proof}

This allows the following connection to be made between the dual Steenrod algebra and the graded bialgebra $\bialg^+$.

\begin{prop}
	There is a morphism of monoid functors
\[
	\calo (\monoid^+) \rightarrow \monoid^+_\infty,
\]
induced by considering a polynomial as a formal power series.

The adjoint morphism $
	\monoid^+ \rightarrow \gamma \monoid^+_\infty
$
is induced by the morphism of graded bialgebras
 $
	\gamma \dualst \rightarrow \bialg^+
 $
(where $\gamma$ is the analogue for bialgebras of the functor defined above), which has components
\[
	\theta _n : \field [\xi_i ] \rightarrow \bialg_n^+ \cong \field [\alpha_1, \ldots , \alpha_n]
\]
$\xi_i \mapsto \alpha _i$, for $1 \leq i \leq n$, and $\xi_i \mapsto 0$, for $i>n$.
\end{prop}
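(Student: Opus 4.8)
The plan is to establish the three assertions in turn, deducing the second and third formally from the first via the adjunction $\calo \dashv \gamma$ and representability.

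First I would construct the morphism of monoid functors $\calo(\monoid^+) \to \monoid_\infty^+$. For a commutative $\field$-algebra $A$, an element of $\calo(\monoid^+)(A) = \amalg_n \monoid_n^+(A)$ is a polynomial $\sum_{i=0}^n \alpha_i x^{2^i}$ with $\alpha_0 = 1$, recorded together with its degree $n$; the assignment sends it to the formal power series with the same coefficients, that is, with $\alpha_i = 0$ for $i > n$. Naturality in $A$ is immediate, so the only point requiring verification is that this is a homomorphism of monoids, which amounts to the compatibility of the inclusion of polynomials into power series with composition. Over $\field$ the Frobenius gives $g(x)^{2^i} = \sum_j \beta_j^{2^i} x^{2^{i+j}}$ for $g = \sum_j \beta_j x^{2^j}$, so $f \circ g$ is computed by the same formula whether one works with polynomials or power series; moreover for $f \in \monoid_m^+(A)$ and $g \in \monoid_n^+(A)$ the top term of $f \circ g$ lies in degree $m+n$, which is exactly the compatibility with the graded product $\monoid_m^+ \times \monoid_n^+ \to \monoid_{m+n}^+$ needed for the source to be the monoid $\calo(\monoid^+)$.

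Next I would pass to the adjoint. By the adjunction $\calo \dashv \gamma$ (applied pointwise in $A$), the morphism above corresponds to a morphism of graded monoid functors $\monoid^+ \to \gamma \monoid_\infty^+$ whose degree-$n$ component is simply the restriction of the above map to $\monoid_n^+$, namely $f \mapsto \bar f$. I would then observe that $\gamma$ is compatible with representability: since $\gamma N$ repeats $N$ in every degree with structure maps induced by the product of $N$, the functor $\gamma \monoid_\infty^+$ is represented by the graded bialgebra $\gamma \dualst$, whose components are all $\dualst$ and whose comultiplications are all the coproduct of $\dualst$ encoding composition of power series; this is a graded bialgebra precisely because that coproduct is coassociative and counital.

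Finally, as both $\monoid^+$ and $\gamma \monoid_\infty^+$ are representable, the above morphism is induced by a unique morphism of graded bialgebras $\gamma \dualst \to \bialg^+$, and its components $\theta_n : \dualst \to \bialg_n^+$ are read off by Yoneda. Concretely, $\theta_n$ is the image of the universal polynomial $\sum_{i=0}^n \alpha_i x^{2^i} \in \monoid_n^+(\bialg_n^+)$ (corresponding to $\mathrm{id}_{\bialg_n^+}$) under the degree-$n$ component $\monoid_n^+(\bialg_n^+) \to \monoid_\infty^+(\bialg_n^+) = \mathrm{Hom}(\dualst, \bialg_n^+)$; comparing the coefficients of the resulting power series with those of the universal series $\sum_i \xi_i x^{2^i}$ of $\dualst$ yields exactly $\xi_i \mapsto \alpha_i$ for $1 \leq i \leq n$ and $\xi_i \mapsto 0$ for $i > n$. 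The only genuine computation is the elementary Frobenius check in the first step; what might look like the main obstacle, namely verifying that the $\theta_n$ are compatible with the two comultiplications so as to form a morphism of graded bialgebras, requires no separate argument, since representability is a fully faithful contravariant functor and the $\theta_n$ arise from a bona fide morphism of graded monoid functors.
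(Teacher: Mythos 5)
Your proposal is correct, and it fills in exactly the argument the paper treats as immediate (its proof is simply ``Clear''): polynomials include into formal power series compatibly with composition (the Frobenius identity $g(x)^{2^i}=\sum_j \beta_j^{2^i}x^{2^{i+j}}$ being the only computation), the adjunction $\calo \dashv \gamma$ transposes this to $\monoid^+ \to \gamma\monoid_\infty^+$, and Yoneda applied to the representing objects yields the bialgebra morphism $\gamma\dualst \to \bialg^+$ with components $\theta_n$ read off from the universal element. No gaps; this is the same route, just made explicit.
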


\begin{proof}
	Clear.
\end{proof}

\begin{rem}
	The morphism $\calo (\monoid^+) \rightarrow \monoid^+_\infty$ can be viewed as
 a group completion of the monoid $\calo (\monoid^+)$.
\end{rem}

\subsection{Change of generators for $\bialg$}

There is a second standard choice of generators for the algebra $\bialg_n$ (for each $n \in \nat$). In the following, by
convention, $\alpha_0 =1$.

\begin{nota}
	Let $Q_{n,i}$ denote the element $\frac{\alpha_i}{\alpha_n}$ (for $0\leq i \leq n$) of $\bialg_n$. (For other
integers $i$, $Q_{n,i}$ is taken to be zero.) Hence
$|Q_{n,i}| = 2^n - 2^i$.
\end{nota}

\begin{lem}
\label{lem:Delta_1,1_Q}
	For $n \in \nat$, there is an isomorphism of algebras:
\[
	\bialg_n \cong \field [Q_{n,0}^{\pm 1}, Q_{n,1}, \ldots, Q_{n,n-1}].
\]
	The diagonal $\Delta_{1,1}: \bialg_2 \rightarrow \bialg _1 \otimes \bialg_1 $ is the morphism of
$\field$-algebras determined by
\begin{eqnarray*}
	Q_{2,0} & \mapsto & Q_{1,0}^2 \otimes Q_{1,0}
\\
Q_{2,1} & \mapsto & Q_{1,0} \otimes Q_{1,0} + Q_{1,0}^2 \otimes 1.
\end{eqnarray*}
\end{lem}

\begin{proof}
	Straightforward.
\end{proof}

\begin{rem}
In \cite{singer_invt_lambda}, Singer uses invariant theory to construct a graded coalgebra $\Gamma$, where $\Gamma_n
\cong \field [Q_{n,0}^{\pm 1} , \ldots, Q_{n,n-1}]$. The coproducts of the generators correspond to the coproducts of
the Dyer-Lashof algebra (Cf. \cite[(2.14)]{singer_invt_lambda}).
\end{rem}

\begin{prop}
\label{prop:Gamma_iso_bialg}
The graded coalgebra $\Gamma$ has the structure of a graded bialgebra.
\end{prop}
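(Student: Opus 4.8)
The plan is to realize the coalgebra $\Gamma$ as the bialgebra $\bialg$ of Proposition \ref{prop:rep-graded-monoids}, so that the bialgebra structure already present on $\bialg$ supplies the required structure on $\Gamma$.

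First I would match the underlying graded algebras. Lemma \ref{lem:Delta_1,1_Q} identifies the degree-$n$ part of $\bialg$ with the polynomial algebra $\field[Q_{n,0}^{\pm 1}, Q_{n,1}, \ldots, Q_{n,n-1}]$, which is precisely Singer's presentation of $\Gamma_n$ recalled above. Sending each generator $Q_{n,i}$ to the class of the same name thus yields, in every degree, an isomorphism of commutative $\field$-algebras $\Gamma_n \stackrel{\cong}{\rightarrow} \bialg_n$, and hence an isomorphism of the underlying graded vector spaces $\Gamma \cong \bialg$.

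It then remains to check that this identification carries Singer's comultiplication on $\Gamma$ to the comultiplication of $\bialg$. Since $\bialg$ is a bialgebra, its comultiplication is a morphism of algebras into $\bigoplus_{i+j=n} \bialg_i \otimes \bialg_j$, and is therefore determined by its values on the generators $Q_{n,i}$; these values are computed from the composition product of additive polynomials, the case $\Delta_{1,1}$ being recorded explicitly in Lemma \ref{lem:Delta_1,1_Q}. Singer's coproducts on the generators of $\Gamma$ are the Dyer-Lashof formulas of \cite[(2.14)]{singer_invt_lambda}, extended multiplicatively. I would verify that the two sets of formulas agree on generators; granting this, multiplicativity forces the comultiplications to coincide, so the above identification is an isomorphism of graded coalgebras. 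As $\bialg$ is a bialgebra and the identification is an algebra isomorphism in each degree, the comultiplication of $\Gamma$ is an algebra map, which is to say that $\Gamma$ is a graded bialgebra.

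The main obstacle is precisely this comparison of coproducts on generators, across all degrees. One must reconcile the invariant-theoretic normalization of the classes $Q_{n,i}$ used in \cite{singer_invt_lambda} with the normalization $Q_{n,i} = \alpha_i/\alpha_n$ arising from the monoid $\monoid$, and confirm that Singer's coproducts, given on generators, are extended multiplicatively in the same fashion as the bialgebra comultiplication of $\bialg$. The representative calculation is the case $\Delta_{1,1}$ of Lemma \ref{lem:Delta_1,1_Q} matched against \cite[(2.14)]{singer_invt_lambda}; the higher components $\Delta_{i,j}$ are then controlled by coassociativity together with the multiplicativity of both comultiplications, so that no essentially new computation is required beyond the base case.
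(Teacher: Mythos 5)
Your proposal is correct and follows the paper's own route: the paper's proof is precisely ``a generalization of Lemma \ref{lem:Delta_1,1_Q}'', i.e.\ identify $\Gamma_n$ with $\bialg_n$ via the generators $Q_{n,i} = \alpha_i/\alpha_n$ and check that the diagonals computed from composition of additive polynomials agree on generators with Singer's Dyer--Lashof coproducts, so that $\Gamma$ inherits the bialgebra structure of $\bialg$. Your write-up simply makes explicit the details the paper leaves implicit.
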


\begin{proof}
A generalization of  Lemma \ref{lem:Delta_1,1_Q}.
\end{proof}

\begin{rem}
The graded bialgebra $\Gamma$ contains the sub-bialgebra $\Gamma^-$, which is defined by
$
	\Gamma^-_n := \field [Q_{n,0} , \ldots, Q_{n,n-1}];
$
this defines a sub-bialgebra $\bialg^-$ of $\bialg$.
\end{rem}

\subsection{The coalgebras $\bialg$, $\bialg^+$ are quadratic}

In this section it is shown that the underlying coalgebras of $\bialg$ and $\bialg^+$ are quadratic. (This result can
also  be deduced from the results of \cite[Section 2]{singer_hopf_ops}.)

\begin{prop}
\label{prop:cogeneration}
	The underlying graded coalgebras  of $\bialg^+ $ and of $\bialg$ are  cogenerated in degree one.
\end{prop}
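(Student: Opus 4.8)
The plan is to verify the defining criterion head on. By definition $\bialg$ is cogenerated in degree one exactly when the canonical map $\bialg \to \ctensor(\bialg_1)$ is a monomorphism, and in degree $n$ this map is the iterated coproduct $\Delta_{1^n} \colon \bialg_n \to \bialg_1^{\otimes n}$ landing in the all\nobreakdash-weight\nobreakdash-one summand $\ctensor(\bialg_1)_n = \bialg_1^{\otimes n}$. So it suffices to show that $\Delta_{1^n}$ is injective for every $n$, the cases $n\le 1$ being the identity. I would treat $\bialg$ first and deduce the case of $\bialg^+$ at the end: since $\bialg^+ \hookrightarrow \bialg$ is a sub\nobreakdash-bialgebra inducing the inclusion $\bialg^+_1 = \field[\alpha_1] \hookrightarrow \bialg_1 = \field[\alpha_1^{\pm 1}]$, naturality of the cogeneration map gives a commuting square; as $\ctensor$ sends this inclusion to a monomorphism $\ctensor(\bialg^+_1) \hookrightarrow \ctensor(\bialg_1)$ (tensor powers are exact over a field), injectivity of $\bialg \to \ctensor(\bialg_1)$ forces injectivity of $\bialg^+ \to \ctensor(\bialg^+_1)$.

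Next I would make $\Delta_{1^n}$ explicit through representability (Proposition \ref{prop:rep-graded-monoids}): $\bialg_n$ represents $\monoid_n$, $\bialg_1^{\otimes n}$ represents $\monoid_1^{\times n}$, and $\Delta_{1^n}$ represents the $n$\nobreakdash-fold composition $(g_1,\dots,g_n)\mapsto g_1\circ\cdots\circ g_n$. Writing $a_r$ for the generator $\alpha_1$ of the $r$\nobreakdash-th tensor factor, so $\bialg_1^{\otimes n} = \field[a_1^{\pm 1},\dots,a_n^{\pm 1}]$ and $g_r = x + a_r x^2$, the composite is an additive polynomial $\sum_{p=0}^n c_p x^{2^p}$ and $\Delta_{1^n}(\alpha_p) = c_p$. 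The $c_p$ obey the recursion coming from splitting off the outermost factor (compatible with the stated coproduct $\Delta_{1,1}(\alpha_2)=\alpha_1^2\otimes\alpha_1$); in particular $c_0 = 1$, $c_1 = a_1 + \cdots + a_n$, and the top coefficient $c_n$ is a single monomial $\prod_r a_r^{2^{\ast}}$, hence a unit of $\bialg_1^{\otimes n}$.

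The crux, and the step I expect to be the main obstacle, is to prove that this algebra map between integral domains is injective, equivalently that $c_1,\dots,c_n$ are algebraically independent over $\field$; injectivity on the polynomial subalgebra $\field[\alpha_1,\dots,\alpha_n]$ then persists after inverting $\alpha_n$, whose image $c_n$ is a unit. The cleanest concrete route is a triangular leading\nobreakdash-term argument: for a suitable lexicographic order on the $a_r$ the recursion shows that the leading monomial of $c_p$ is $\prod_{j=0}^{p-1} a_{\,\ast}^{\,2^{j}}$, involving exactly $p$ of the variables with exponents $1,2,\dots,2^{p-1}$. The resulting matrix of leading exponents is then triangular with diagonal entries $2^{j}\neq 0$, so the leading monomials are multiplicatively independent and $c_1,\dots,c_n$ are algebraically independent.

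The same fact admits a conceptual reading that I would record as a remark: the claim is exactly that the composition morphism $\monoid_1^{\times n} \to \monoid_n$ is dominant. Since $\alpha_0 = 1$, every $f\in\monoid_n(\bar\field)$ is a separable additive polynomial, so its root set is an $n$\nobreakdash-dimensional $\field$\nobreakdash-vector space; a choice of complete flag exhibits $f$ as a composite of $n$ degree\nobreakdash-one additive polynomials, whence the morphism is surjective on $\bar\field$\nobreakdash-points and therefore dominant. Either argument yields injectivity of each $\Delta_{1^n}$, so $\bialg \to \ctensor(\bialg_1)$ is a monomorphism; the reduction of the first paragraph then settles $\bialg^+$.
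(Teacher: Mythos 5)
Your proof is correct, but it follows a genuinely different route from the paper's. The paper reduces in the opposite direction (from $\bialg$ to $\bialg^+$ by clearing denominators) and then only ever looks at the single-step diagonal $\Delta_{n-1,1}\colon \bialg^+_n \rightarrow \bialg^+_{n-1}\otimes\bialg^+_1$, whose injectivity for all $n$ suffices by induction; that injectivity is proved not by leading terms but by composing with the projection $\bialg^+_{n-1}\otimes\field[\alpha_1]\twoheadrightarrow \bialg^+_{n-1}$, $\alpha_1\mapsto 0$, under which the images $\alpha'_1,\dots,\alpha'_{n-1}$ stay algebraically independent while $\alpha'_n$ dies, followed by a minimal-degree contradiction: a least-degree relation $f(\alpha'_n)=0$ has vanishing constant term, so $f(x)/x$ is a smaller relation. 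You instead compute the full iterated diagonal $\Delta_{1^n}$ in one go via representability (Proposition \ref{prop:rep-graded-monoids}), as the coefficients $c_p$ of an $n$-fold composite of the $x+a_rx^2$, and prove algebraic independence of $c_1,\dots,c_n$ by the triangular matrix of leading exponents; your localization step (injectivity on $\bialg^+_n$ persists after inverting $\alpha_n$ since $c_n$ is a unit in the Laurent ring) and the naturality square handling $\bialg^+$ are both sound. What the paper's route buys is economy and reuse: the explicit one-step formulas for $\Delta_{n-1,1}$ and $\Delta_{1,n-1}$ are exactly the computations recycled in the proof of Theorem \ref{thm:bialg_quad}, so nothing is done twice. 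What your route buys is closed-form control of the whole iterated coproduct (the $c_p$ are Dickson-invariant-flavoured symmetric functions) and, in your closing remark, a conceptual explanation the paper never states: the composition morphism $\monoid_1^{\times n}\rightarrow\monoid_n$ is surjective on $\overline{\field}$-points because a separable additive polynomial factors along any complete flag of its kernel, which is precisely the invariant-theoretic picture behind Singer's construction of $\Gamma$ and explains why one-cogeneration should hold at all.
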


\begin{proof}
A straightforward localization argument shows that it is sufficient to show that $\bialg^+$ is cogenerated in degree
one.
Moreover,  it suffices to prove that, for each $n \geq 2$, the diagonal $\Delta_{n-1, 1} : \bialg_n^+ \rightarrow
\bialg_{n-1}^+ \otimes \bialg_1^+ $ is a monomorphism.

The diagonal $\Delta_{n-1, 1} : \field [\alpha_1, \ldots , \alpha_n] \rightarrow
\field[\alpha_1 , \ldots , \alpha_{n-1}] \otimes \field[\alpha_1]$ is determined by
\begin{eqnarray*}
	\alpha_1 & \mapsto & 1 \otimes \alpha_1 + \alpha_1 \otimes 1 \\
\alpha_s & \mapsto & \alpha_{s-1}^2 \otimes \alpha_1 + \alpha_s \otimes 1 \\
\alpha_n & \mapsto  & \alpha_{n-1}^2 \otimes \alpha_1,
\end{eqnarray*}
where $1\leq  s < n$.

We require to prove that the images $\alpha_i'$ of the elements $\alpha_i$ are algebraically independent. By composing
with the projection $p : \field[\alpha_1, \ldots , \alpha_{n-1}] \otimes \field [\alpha_1] \twoheadrightarrow
\field[\alpha_1, \ldots , \alpha_{n-1}]$ induced by  $\field [\alpha_1] \rightarrow \field$, $\alpha_1 \mapsto 0$, it is
straightforward to show that the elements $\alpha_1' ,\ldots , \alpha_{n-1} '$ are algebraically independent.

Suppose that there exists a nonzero polynomial $f \in \field [\alpha_1', \ldots , \alpha_{n-1}'] [x] $ such that
$f(\alpha_n' )=0$, considered as an element of $\field[\alpha_1, \ldots , \alpha_{n-1}] \otimes \field [\alpha_1]$ and
choose such an $f$ of minimal degree (in $x$). Since $p (\alpha'_n)=0$, one has $p (f(\alpha_n'))= p (f_0)=0$. It
follows that $f_0 =0$, hence $f(x)$ can be replaced by $g(x):= f(x)/x$, which is again non-trivial and which satisfies
$g(\alpha'_n)=0$. This contradicts the hypothesis that the degree of $f$ is minimal.
\end{proof}

\begin{nota}
 Let $n$ be a positive integer and $1 \leq a \leq n$ be an integer. Write
 \begin{enumerate}
  \item
  $j_n : \bialg_{n-1}^+ \hookrightarrow \bialg_n^+$ for the canonical inclusion of algebras
  $\field [\alpha_1 , \ldots , \alpha_{n-1}] \hookrightarrow \field [\alpha_1 , \ldots , \alpha_{n}]$;
  \item
$p_a : (\bialg^+_1 ) ^{\otimes n}\twoheadrightarrow  (\bialg^+_1 ) ^{\otimes n-1}$
   for the projection of algebras induced by the augmentation of the $a^{\mathrm{th}}$ factor;
   \item
   $\delta_a : \bialg_n^+ \rightarrow (\bialg^+_1 ) ^{\otimes n-1}$ for the composite of the iterated coproduct with
$p_a$.
 \end{enumerate}
\end{nota}

\begin{lem}
\label{lem:alpha_divisibility}
 Let $n$ be a positive integer and $1 \leq a \leq n$ be an integer.
 \begin{enumerate}
  \item
  The composite
  $\bialg_{n-1}^+ \stackrel{j_n} {\rightarrow } \bialg_n^+ \stackrel{\delta_a} {\rightarrow } (\bialg^+_1 ) ^{\otimes
n-1}$ is the iterated coproduct.
\item
The kernel of $\delta_a : \bialg_n^+ \rightarrow (\bialg^+_1 ) ^{\otimes
n-1}$ is the ideal $\alpha_n \bialg_n^+$.
 \end{enumerate}
 \end{lem}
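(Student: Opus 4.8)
The plan is to make the iterated coproduct completely explicit by dualizing the polynomial composition that defines $\monoid^+$. Recall from Proposition \ref{prop:rep-graded-monoids} that $\bialg^+_n$ represents $\monoid^+_n$ and that the $n$-fold product $\monoid^+_1 \times \cdots \times \monoid^+_1 \to \monoid^+_n$ is composition of degree-one polynomials. Writing $t_i$ for the generator $\alpha_1$ of the $i$-th tensor factor $\bialg^+_1 = \field[t_i]$, and $g_i := x + t_i x^2$ for the corresponding universal element, the iterated coproduct $\Delta^{(n)} \colon \bialg^+_n \to (\bialg^+_1)^{\otimes n}$ is dual to this product; thus $\Delta^{(n)}(\alpha_k)$ is the coefficient of $x^{2^k}$ in the composite $g_n \circ \cdots \circ g_1$ (the Frobenius over $\field$ keeping each composition clean, as in the Remark following Proposition \ref{prop:rep-graded-monoids}). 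The projection $p_a$ applies the augmentation $t_a \mapsto 0$ to the $a$-th factor, which replaces $g_a$ by the identity; hence $\delta_a(\alpha_k)$ is the coefficient of $x^{2^k}$ in the composite $g_n \circ \cdots \circ \widehat{g_a} \circ \cdots \circ g_1$ of the remaining $n-1$ degree-one polynomials.

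For part (1), I would note that for $k \leq n-1$ one has $j_n(\alpha_k) = \alpha_k$, so $\delta_a(j_n(\alpha_k))$ is the coefficient of $x^{2^k}$ in the composite of $g_1, \ldots, g_{a-1}, g_{a+1}, \ldots, g_n$ taken in order. Relabelling the surviving factors $1, \ldots, a-1, a+1, \ldots, n$ as $1, \ldots, n-1$ (preserving their order) identifies this composite with the defining composite for the iterated coproduct of $\bialg^+_{n-1}$; hence $\delta_a \circ j_n$ is exactly that iterated coproduct. The only thing to verify is that this order-preserving relabelling is compatible with the tensor identification of the target, which is immediate.

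For part (2), observe first that $\delta_a$ is a morphism of algebras, being the composite of the algebra maps $\Delta^{(n)}$ and $p_a$, so $\ker\delta_a$ is an ideal. A composite of $n-1$ polynomials of the form $x + t_i x^2$ has top term $x^{2^{n-1}}$, so its coefficient of $x^{2^n}$ vanishes; thus $\delta_a(\alpha_n) = 0$ and $\alpha_n \bialg^+_n \subseteq \ker\delta_a$. For the reverse inclusion I would use that $\delta_a(\alpha_n) = 0$ lets $\delta_a$ factor as $\delta_a = \bar\delta_a \circ q$ through the quotient algebra map $q \colon \bialg^+_n \twoheadrightarrow \bialg^+_n/\alpha_n\bialg^+_n \cong \bialg^+_{n-1}$. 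Since $q$ is a retraction of $j_n$, part (1) gives $\bar\delta_a = \bar\delta_a \circ q \circ j_n = \delta_a \circ j_n = \Delta^{(n-1)}$. Now Proposition \ref{prop:cogeneration} asserts that $\bialg^+$ is cogenerated in degree one, which (by coassociativity and induction, factoring $\Delta^{(m)}$ as $(\Delta^{(m-1)} \otimes \mathrm{id}) \circ \Delta_{m-1,1}$) is precisely the injectivity of each iterated coproduct $\bialg^+_m \to (\bialg^+_1)^{\otimes m}$. Applying this with $m = n-1$ shows $\bar\delta_a = \Delta^{(n-1)}$ is injective, whence $f \in \ker\delta_a \iff q(f) = 0 \iff f \in \alpha_n\bialg^+_n$, giving the claimed equality.

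The main obstacle is the bookkeeping of the first paragraph: one must match the comultiplication convention fixed after Proposition \ref{prop:rep-graded-monoids} with the composition order of the $g_i$, and confirm that augmenting the $a$-th factor genuinely deletes $g_a$ from the composite rather than truncating it. Once this dictionary is established, part (1) is a relabelling and part (2) reduces to the already-proved cogeneration property.
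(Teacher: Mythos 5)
Your proof is correct and takes essentially the same route as the paper: part (1) is the representability argument (identifying $\delta_a \circ j_n$ as dual to composition of the $n-1$ surviving polynomials, which you carry out explicitly on generators), and part (2) rests on the retraction $q$ of $j_n$ — equivalently the paper's $\bialg^+_{n-1}$-module decomposition $\bialg^+_n \cong \bialg^+_{n-1} \oplus \alpha_n \bialg^+_n$ — combined with injectivity of the iterated coproduct supplied by Proposition \ref{prop:cogeneration}. You fill in details the paper leaves implicit (notably $\delta_a(\alpha_n)=0$ for degree reasons), but the underlying ideas coincide.
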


\begin{proof}
The first statement follows by considering the natural transformation which is represented by the composite morphism. In
particular, this
identification shows that the composite is a monomorphism. The second statement follows, by considering the decomposition as $\bialg_{n-1}^+$-modules $\bialg_n^+ \cong \bialg_{n-1}^+ \oplus \alpha_n \bialg_n^+$ given by using the inclusion
$j_n$.
\end{proof}

To avoid notational confusion, write  $\field[u_1, \ldots , u_n]$ for the algebra $(\bialg^+_1) ^{\otimes n}$, where $u_i$ corresponds to the generator
$\alpha_1$ of the $i$th tensor factor; write $\alpha_i'$ for the image in $(\bialg^+_1) ^{\otimes n}$ of the generator $\alpha_i$.

\begin{thm}
\label{thm:bialg_quad}
	The underlying coalgebras of $\bialg$ and of $\bialg^+$ are quadratic.
\end{thm}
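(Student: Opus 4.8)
The plan is to reduce to $\bialg^+$ and then to identify the image of the iterated coproduct with the full quadratic intersection, the technical heart being a divisibility statement for the explicit generators $\alpha_n'$. As in the proof of Proposition \ref{prop:cogeneration}, a localization argument (inverting $\alpha_n$ in degree $n$) reduces the assertion for $\bialg$ to the assertion for $\bialg^+$, so I concentrate on the latter. Write $V := \bialg^+_1 = \field[\alpha_1]$ and let $R \leq V^{\otimes 2}$ be the image $\Delta_{1,1}(\bialg^+_2)$, so that inside $V^{\otimes 2} = \field[u_1,u_2]$ one has $R = \field[\,u_1+u_2,\, u_1^2 u_2\,]$. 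By Proposition \ref{prop:cogeneration} the coalgebra is one-cogenerated, so the iterated coproduct $\Delta^{(n)} : \bialg^+_n \hookrightarrow V^{\otimes n} = \field[u_1,\dots,u_n]$ is an injective morphism of algebras; denote its image by $W_n$. Coassociativity gives the inclusion $W_n \subseteq \langle V;R\rangle_n = \bigcap_{a=1}^{n-1} V^{\otimes(a-1)}\otimes R \otimes V^{\otimes(n-1-a)} =: J_n$, and quadraticity of $\bialg^+$ amounts exactly to the reverse inclusion $J_n \subseteq W_n$.

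I would prove $J_n \subseteq W_n$ by induction on $n$, the cases $n \leq 2$ holding by the definition of $R$. For the inductive step the augmentation $p_n$ of the last tensor factor is the main tool: since $p_n \circ \Delta^{(n)} = \delta_n$ and $\Delta^{(n)}$ is an algebra map, Lemma \ref{lem:alpha_divisibility} produces a short exact sequence $0 \to \alpha_n' W_n \to W_n \xrightarrow{p_n} W_{n-1} \to 0$, in which the kernel $\alpha_n' W_n = \Delta^{(n)}(\alpha_n \bialg^+_n)$ is identified using that $\Delta^{(n)}$ is multiplicative. On the other hand $p_n$ carries $J_n$ into the corresponding intersection $J_{n-1}$ for $\field[u_1,\dots,u_{n-1}]$, which equals $W_{n-1}$ by the inductive hypothesis. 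Hence, given $z \in J_n$, after subtracting $\Delta^{(n)}(b)$ for a suitable $b \in \bialg^+_{n-1}$ chosen so that $\delta_n(b) = p_n(z)$ (Lemma \ref{lem:alpha_divisibility}(1)), I may assume $p_n(z) = 0$, i.e. $z$ is divisible by $u_n$. The step then closes by a downward induction on internal degree, once one knows the divisibility claim below: it writes $z - \Delta^{(n)}(b) = \alpha_n' w$ with $w$ of strictly less negative degree, so $w \in W_n$ by induction and therefore $z \in W_n$.

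The crux is the claim that $z \in J_n$ with $u_n \mid z$ forces $z = \alpha_n' w$ for some $w \in J_n$, where $\alpha_n' = \prod_{k=1}^n u_k^{2^{n-k}}$ is the image of the top generator $\alpha_n$, computed by iterating $\Delta_{n-1,1}$. The mechanism is a propagation of divisibility along the chain of adjacent relations. In the slots $(a,a+1)$ one has $R = \field[\,u_a+u_{a+1},\, u_a^2 u_{a+1}\,]$; since $u_a+u_{a+1}$ has $u_{a+1}$-adic valuation zero while $u_a^2 u_{a+1}$ has valuation one, divisibility by $u_{a+1}^m$ of an element of this subalgebra is equivalent to divisibility by $(u_a^2 u_{a+1})^m$, and in particular forces divisibility by $u_a^{2m}$, with quotient of the same tensor shape. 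Feeding this through $I_{n-1}, I_{n-2}, \dots, I_1$ in turn doubles the exponent at each stage: from $v_n(z) \geq 1$ one obtains $v_k(z) \geq 2^{n-k}$ for every $k$, which is precisely divisibility by $\alpha_n'$. Finally, for each $a$ the factorization $\alpha_n' = (u_a^2 u_{a+1})^{2^{n-a-1}} \cdot \prod_{k \neq a,a+1} u_k^{2^{n-k}}$ exhibits $w = z/\alpha_n'$ as the quotient of an element of $I_a$ first by a power of $u_a^2 u_{a+1}$ (remaining in $I_a$ by the point just made) and then by an outer monomial (which preserves the tensor shape), so that $w \in \bigcap_a I_a = J_n$.

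The main obstacle is this divisibility claim, and within it both the exponent-doubling propagation and the verification that the quotient $w$ again lies in every $I_a$. The argument relies on the facts that $V^{\otimes n} = \field[u_1,\dots,u_n]$ is a unique factorization domain, that each $\bialg^+_n$ is of finite type (so that each internal degree is finite-dimensional and the downward induction is well-founded), and on the explicit monomial form of $\alpha_n'$. Once $\bialg^+$ is treated, the localization of the first paragraph, together with the exactness of localization and its commutation with finite intersections, delivers the statement for $\bialg$.
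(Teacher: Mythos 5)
Your proposal is correct, and while it shares the paper's outer skeleton, its technical core runs along a genuinely different line. Both proofs localize to reduce $\bialg$ to $\bialg^+$, use one-cogeneration (Proposition \ref{prop:cogeneration}) to work inside $\field[u_1,\ldots,u_n]$, perform a subtraction to reach an element divisible by a single variable, and then induct on internal degree via the claim that such an element of the intersection is divisible by $\alpha_n'=\prod_k u_k^{2^{n-k}}$ with quotient again in the intersection. The difference lies in how that divisibility is produced. The paper first uses induction on $n$ to replace the full intersection by the two-block intersection $(\bialg^+_{n-1}\otimes\bialg^+_1)\cap(\bialg^+_1\otimes\bialg^+_{n-1})$ and then bounces between the two block descriptions using Lemma \ref{lem:alpha_divisibility}(2) (the kernel of $\delta_a$ is the principal ideal on the top generator): $u_1$-divisibility gives $\beta_{n-1}$-divisibility, hence $u_{n-1}$-divisibility, hence $\gamma_{n-1}$-divisibility, and a second pass yields $\beta_{n-1}^2u_n=\alpha_n'$. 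You instead keep the full intersection $\bigcap_a I_a$ of adjacent-slot relation spaces and propagate valuations along the chain using only the explicit form $R=\field[u_a+u_{a+1},\,u_a^2u_{a+1}]$: in each slot, $u_{a+1}^m$-divisibility forces $(u_a^2u_{a+1})^m$-divisibility with quotient still of the same shape, so exponents double slot by slot, giving $v_k(z)\geq 2^{n-k}$ for all $k$, which is exactly $\alpha_n'$-divisibility; the same mechanism plus freeness of $I_a$ over the outer variables (in a UFD) keeps the quotient in every $I_a$. Your route is more elementary and self-contained --- it invokes Lemma \ref{lem:alpha_divisibility} only through part (1) in the lifting step, and it makes transparent why the exponents $2^{n-k}$ appear --- whereas the paper's route recycles its principal-kernel lemma and needs only the two outermost constraints. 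One detail to make explicit in a write-up: your valuation claim must be stated for elements of $I_a=\field[u_1,\ldots,u_{a-1}]\otimes R\otimes\field[u_{a+2},\ldots,u_n]$, with coefficients in the outer variables, not merely for elements of the subalgebra $R$ itself; the same substitution argument (setting $u_{a+1}=0$ kills exactly the ideal generated by $u_a^2u_{a+1}$) proves it in that generality, so this is routine rather than a gap.
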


\begin{proof}
	By a  localization argument (clearing fractions), it is sufficent to show that $\bialg^+$ is quadratic.
Moreover,
 by induction upon $n$, it is sufficient to show that, for $n \geq 3$, the intersection of $\bialg^+_{n-1} \otimes
\bialg^+_1$ and $\bialg^+_1 \otimes \bialg^+_{n-1}$ in $(\bialg^+_1) ^{\otimes n}$ is equal to $\bialg^+_n$.

The diagonals $\Delta_{n-1,1}$ and $\Delta_{1,n-1}$ can be calculated explicity; for notational clarity, different sets
of  generators $\{\beta_i
\}$ and $\{ \gamma_i \}$ for $\bialg^+_{n-1}$ are used. By definition, the morphism $\Delta _{n-1,1} : \bialg^+_n \rightarrow \bialg^+_{n-1}
\otimes \bialg^+_1 \cong \field [\beta_1 , \ldots , \beta_{n-1}] \otimes \field [u_n]$ is determined by the coefficients
of the composition
\[
	( x+u_n x^2) \circ (x + \beta_1 x^2 + \beta_2 x^4 + \ldots + \beta_{n-1} x^{2^{n-1}} ),
\]
so $\Delta _{n-1,1}$ is given by:
\begin{eqnarray*}
	\alpha_1 &\mapsto & 1 \otimes u_n + \beta_1 \otimes 1\\
\alpha_s & \mapsto & \beta_{s-1}^2 \otimes u_n + \beta_{s} \otimes 1 \ \  \ \ (1 < s < n) \\
\alpha_n & \mapsto & \beta_{n-1}^2 \otimes u_n.
\end{eqnarray*}

Similarly, $\Delta _{1,n-1} : \bialg^+_n \rightarrow \bialg^+_1 \otimes  \bialg^+_{n-1}  \cong  \field [u_1]\otimes
\field [\gamma_1 , \ldots , \gamma_{n-1}] $ is determined by the coefficients of the composition
\[
	 (x + \gamma_1 x^2 + \gamma_2 x^4 + \ldots + \gamma_{n-1} x^{2^{n-1}} )\circ (x+u_1 x^2),
\]
so that $\Delta _{1,n-1}$ is given by:
\begin{eqnarray*}
	\alpha_1 &\mapsto &  u_1  \otimes 1 + 1 \otimes \gamma_1 \\
\alpha_s & \mapsto & u_1^{2^{s-1}}\otimes \gamma_{s-1} + 1 \otimes \gamma_{s}  \ \ \ \  (1 < s < n) \\
\alpha_n & \mapsto &  u_1^{2^{n-1}} \otimes \gamma_{n-1}.
\end{eqnarray*}

Hence there are isomorphisms of subalgebras of $\field [u_1, \ldots , u_n]$:
\begin{eqnarray*}
	\bialg^+_{n-1} \otimes \bialg^+_1 &\cong & \field [\alpha_1', \ldots , \alpha'_{n-1}, u_n] \\
\bialg^+_1 \otimes \bialg^+_{n-1}  &\cong & \field [\alpha_1', \ldots , \alpha'_{n-1}, u_1].
\end{eqnarray*}

Let  $X$ belong to the intersection; considering $X$ in $\field [\alpha_1', \ldots ,
\alpha'_{n-1}, u_1]$, by subtracting an appropriate element of $\field [\alpha_1', \ldots , \alpha'_{n-1}]$, we may suppose that $X$
is divisible by $u_1$. If $X =0$, there is nothing to prove; otherwise, by an inductive argument based on the degree, it is
sufficient to show that $X$ is divisible by $\alpha'_n$, using the fact that,  if $X = Y \alpha'_n$, then $Y$
belongs to the intersection (this follows from the identifications $\alpha_n' = \beta_{n-1}^2 u_n = u_1^{2^{n-1} }\gamma_{n-1}$).

Considering $X$ as an element of  $\bialg^+_{n-1} \otimes \bialg^+_1$, $X$ belongs to the kernel of $\delta_1 \otimes \bialg^+_1$, hence by Lemma \ref{lem:alpha_divisibility},  $X$ is divisible by $\beta_{n-1}= u_1^{2^{n-2}}\ldots u_{n-1}$, in particular, is divisible by $u_{n-1}$.

Using $u_{n-1}$-divisibility and considering $X$ as an element of  $\bialg^+_1 \otimes \bialg^+_{n-1}$, Lemma \ref{lem:alpha_divisibility} (applied with respect to $\bialg^+_1 \otimes \partial_{n-2}$) implies that $X$ is divisible by $\gamma_{n-1} = u_2^{2^{n-2}}\ldots u_{n-1}^2 u_{n}$, so that the element $ X' := X/ \beta_{n-1}$
 of $ \bialg^+_{n-1} \otimes \bialg^+_1$ is divisible by $u_{n-1}u_n$.

Repeating this argument for $X' \in \bialg^+_{n-1} \otimes \bialg^+_1$, which is $u_{n-1}$-divisible, it follows that $X'$ is divisible by $\beta_{n-1}$ and by $u_n$. Hence, $X$ is divisible by $\alpha'_n = \beta_{n-1}^2 u_n$, as required.
\end{proof}

\section{Admissibility and duality for $\bialg$}
\label{sect:duality}

The main result, Theorem \ref{thm:bialg_selfdual}, of this section shows that the underlying graded coalgebra of $\bialg$ is strictly self-dual.

\subsection{Admissibility for $\bialg$}

The graded bialgebra $\bialg$ has an internal degree;  the underlying graded vector space of $\bialg_1$ is isomorphic to $\field [\zed]$ and the subspace $\bialg^+_1$ is
isomorphic to $\field [\nat]$ as graded vector spaces.
(Here the  {\em cohomological} grading has been adopted.)

\begin{defn}
	Let
\begin{enumerate}
	\item
$\cals \subset \zed \times \zed$ denote the subset $\{(i, j)   | i \geq 2j\}$;
\item
let $\cals'$ denote the complement $\cals':= \{(i, j)   | i < 2j\}$.
\end{enumerate}
\end{defn}

\begin{nota}
	Let $\partial: \zed \times \zed \rightarrow \zed \times \zed$ denote the bijection $(i, j) \mapsto (i+2, j+1)$,
which restricts to bijections
\begin{eqnarray*}
	\partial : \cals &\stackrel{\cong}{\rightarrow} & \cals\\
\partial : \cals' &\stackrel{\cong}{\rightarrow} & \cals'.
\end{eqnarray*}
\end{nota}

\begin{lem}
	Let $(i,j)$ be an element of $\cals$ then $(i, j) = \partial^j (e, 0)$, where $e= i-2j\geq 0$ is the excess, so
that $(e,0) \in \cals$.
\end{lem}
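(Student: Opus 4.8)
The plan is to prove the displayed identity $(i,j) = \partial^j(e,0)$ with $e = i - 2j$ by a direct computation using the explicit formula for $\partial$, together with a short verification that the excess $e$ is nonnegative precisely because $(i,j)$ lies in $\cals$.

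First I would record that $\partial(a,b) = (a+2, b+1)$, so that by an immediate induction on $k \geq 0$ one has $\partial^k(a,b) = (a+2k, b+k)$. This is the only structural fact needed and it follows at once from the definition of $\partial$; I would state it as a one-line observation rather than dignify it with a separate lemma. Applying this with $k = j$, $(a,b) = (e,0)$ gives $\partial^j(e,0) = (e + 2j, j)$, so the claim reduces to checking $e + 2j = i$ and $0 + j = j$. The second coordinate is trivially correct, and the first holds by the very definition $e := i - 2j$.

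Next I would verify the two remaining assertions about membership. Since $(i,j) \in \cals$ means $i \geq 2j$ by definition of $\cals$, we have $e = i - 2j \geq 0$, which is exactly the stated inequality for the excess. Conversely, $(e,0)$ has second coordinate $0$, so the condition $e \geq 2 \cdot 0 = 0$ for membership in $\cals$ is precisely $e \geq 0$, already established; hence $(e,0) \in \cals$. The fact that $\partial$ restricts to a bijection of $\cals$ (recorded in the preceding notation) is what guarantees that iterating $\partial$ keeps us inside $\cals$, making the statement coherent, though it is not logically needed for the identity itself.

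There is no real obstacle here: the entire statement is a bookkeeping consequence of the formula for $\partial$ and the defining inequality of $\cals$. If any care is required, it is only in being explicit that the integer $j$ is nonnegative so that $\partial^j$ makes sense as a nonnegative iterate; this is automatic since $(i,j) \in \cals$ forces $i \geq 2j$, but one should note that the intended range is $j \geq 0$ (the excess $e = i - 2j$ being the canonical representative in the orbit of $\partial$ with second coordinate zero). I would therefore present the proof in two short sentences: the iteration formula for $\partial$, followed by substitution.
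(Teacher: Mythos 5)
Your core computation is exactly the intended argument: the paper states this lemma without proof, treating it as immediate, and the substitution $\partial^k(a,b)=(a+2k,b+k)$, applied with $k=j$ and $(a,b)=(e,0)$, together with the defining inequality $i\geq 2j$ of $\cals$, is all there is to it.

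However, your closing caveat contains a genuine error. You assert that $j\geq 0$ is ``automatic since $(i,j)\in\cals$ forces $i\geq 2j$.'' This is false: here $\cali=\zed$, so $\cals\subset\zed\times\zed$, and the condition $i\geq 2j$ in no way constrains the sign of $j$ --- for instance $(-2,-1)\in\cals$, and for this element the lemma asserts $(-2,-1)=\partial^{-1}(0,0)$. So you cannot restrict to nonnegative iterates of $\partial$. The repair is immediate and is in fact supplied by the paper's setup: $\partial$ is introduced as a \emph{bijection} of $\zed\times\zed$ (restricting to bijections of $\cals$ and of $\cals'$), so $\partial^k$ is defined for every $k\in\zed$, negative powers meaning iterates of the inverse $(i,j)\mapsto(i-2,j-1)$, and the formula $\partial^k(a,b)=(a+2k,b+k)$ holds for all integers $k$ (extend your induction to negative $k$, or simply note both sides define the same action of the group $\zed$). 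With that reading, your two-line proof is complete; without it, the lemma as you have framed it would silently exclude the elements of $\cals$ with negative second coordinate, which do occur and do matter (they correspond to the negative powers of $Q_{n,0}$, i.e.\ precisely the part of $\bialg$ not seen by $\bialg^+$).
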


The following provides an ordering result which is useful for constructing admissible bases (compare \cite[Chapter 4,
Section 1]{polis_posit}).

\begin{lem}
\label{lem:admissible_order}
	Let $(i, j) \in \cals$ and $(l,m) \in \cals'$ such that $i+j = l+m$, then $l < i$.
\end{lem}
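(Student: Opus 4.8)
The plan is to introduce the common total degree $N := i+j = l+m$ and to rephrase both membership conditions as constraints on the first coordinate alone, since the statement is purely about comparing the first coordinates $l$ and $i$ of two points lying on the same ``constant total degree'' line.

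First I would rewrite the condition $(i,j) \in \cals$: since $j = N - i$, the defining inequality $i \geq 2j$ becomes $i \geq 2(N-i)$, that is $3i \geq 2N$, so $i \geq \tfrac{2}{3}N$. Dually, for $(l,m) \in \cals'$, writing $m = N - l$, the strict inequality $l < 2m$ becomes $l < 2(N-l)$, that is $3l < 2N$, so $l < \tfrac{2}{3}N$. Chaining the two bounds then gives $l < \tfrac{2}{3}N \leq i$, hence $l < i$, which is the desired conclusion. All of these are linear manipulations valid over $\zed$, so no positivity or sign hypothesis on the coordinates is required.

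There is essentially no obstacle here. Geometrically, on the line of constant total degree $i+j = N$ the two boundary conditions $i = 2j$ (for $\cals$) and $l = 2m$ (for $\cals'$) both meet this line at the single point of first coordinate $\tfrac{2}{3}N$, and the two regions lie on opposite sides of it; this is precisely what makes the comparison immediate. The only point meriting any attention is the bookkeeping of strictness: the conclusion $l < i$, rather than merely $l \leq i$, relies on the defining inequality of $\cals'$ being strict, whereas the defining inequality of $\cals$ need only be used non-strictly.
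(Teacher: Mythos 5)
Your proof is correct: rewriting both membership conditions along the line $i+j=l+m=N$ as $3i \geq 2N$ and $3l < 2N$ and chaining them gives $l<i$, which is exactly the elementary verification intended here — the paper states Lemma \ref{lem:admissible_order} without proof, regarding it as immediate. Your remark on where strictness enters (only the $\cals'$ inequality need be strict) is accurate and harmless.
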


\begin{prop}
\label{prop:bialg_admissible}
	The graded coalgebra $\bialg$ is $(\cali, \cals)$-admissible.
\end{prop}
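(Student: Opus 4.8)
The plan is to identify the relation space $R$ of the quadratic coalgebra $\bialg$ concretely and to show directly that the projection onto the admissible coordinates is an isomorphism. I would fix the isomorphism $\field[\cali] \cong \bialg_1$ (with $\cali = \zed$) sending the generator $[i]$ to $\alpha_1^i$, so that $\bialg_1^{\otimes 2} \cong \field[\cali \times \cali]$ acquires the basis $\alpha_1^a \otimes \alpha_1^b \leftrightarrow (a,b)$. By Theorem \ref{thm:bialg_quad}, $\bialg \cong \langle \bialg_1; R\rangle$ with $R = \bialg_2$, and the inclusion $R \hookrightarrow \bialg_1^{\otimes 2}$ is the degree-two component of $\bialg \hookrightarrow \ctensor(\bialg_1)$, namely the coproduct $\Delta_{1,1}$. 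Admissibility then reduces to showing that the composite $R \hookrightarrow \field[\cali \times \cali] \twoheadrightarrow \field[\cals]$ (killing the $\cals'$-coordinates) is an isomorphism; producing its inverse exhibits the admissible basis of the form (\ref{eqn:basic_admissible}).

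First I would compute $\Delta_{1,1}$ on the monomial basis $\{\alpha_1^m \alpha_2^n : m \geq 0,\ n \in \zed\}$ of $\bialg_2 = \field[\alpha_1, \alpha_2^{\pm 1}]$. Using $\Delta_{1,1}(\alpha_1) = 1\otimes\alpha_1 + \alpha_1\otimes 1$ and $\Delta_{1,1}(\alpha_2) = \alpha_1^2 \otimes \alpha_1$ together with the Frobenius, one finds over $\field$
\[
\Delta_{1,1}(\alpha_1^m\alpha_2^n) = \sum_{k=0}^m \binom{m}{k}\, \alpha_1^{\,k+2n}\otimes\alpha_1^{\,m-k+n}.
\]
The term of largest first exponent is the one with $k=m$, namely $\alpha_1^{\,m+2n}\otimes\alpha_1^{\,n}$, corresponding to the pair $(m+2n, n)$; since $m \geq 0$ this lies in $\cals$, and the assignment $(m,n) \mapsto (m+2n,n)$ is a degree-preserving bijection from the monomial exponents onto $\cals$, the inverse being $(i,j)\mapsto(i-2j,j)$. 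Thus each basis monomial of $R$ has a well-defined admissible leading pair, and distinct monomials have distinct leading pairs.

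The final step is a triangular elimination with respect to the first coordinate, organised by internal degree. In a fixed degree $D$ a basis pair $(a,b)$ is determined by its first coordinate $a$ (since $a+b=D$), and by Lemma \ref{lem:admissible_order} the admissible first coordinates form an interval bounded below (explicitly $a \geq \lceil 2D/3\rceil$), while every inadmissible pair has strictly smaller first coordinate. Hence, after applying the projection, the image of the monomial with leading pair of first coordinate $a_0$ equals $[a_0]$ plus a finite combination of $[a]$ with $\lceil 2D/3\rceil \le a < a_0$. This is a lower-unitriangular system whose index set is bounded below, so it is invertible: injectivity follows by isolating the largest surviving leading coordinate of any putative kernel element, and surjectivity by ascending induction on $a_0$ from the minimal admissible value. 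Therefore the projection restricted to $R$ is an isomorphism in each degree, and $\bialg$ is $(\cali,\cals)$-admissible. I expect the only genuine subtlety to be that $\bialg$ is not of finite type, so that the relevant graded pieces are infinite-dimensional; this is precisely what the lower bound on admissible first coordinates in a fixed degree (Lemma \ref{lem:admissible_order}) neutralises, guaranteeing that the triangular back-substitution terminates and that each resulting basis element of $R$ has finite support.
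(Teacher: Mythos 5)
Your proof is correct and is essentially the paper's own argument: both identify $R=\bialg_2\hookrightarrow\bialg_1^{\otimes 2}\cong\field[x^{\pm 1},y^{\pm 1}]$ and then run a triangular elimination with respect to the first-coordinate ordering of Lemma \ref{lem:admissible_order} within each fixed internal degree, where the admissible coordinates are bounded below. The only difference is one of direction: you compute the images of the monomial basis $\alpha_1^m\alpha_2^n$ explicitly and invert a unitriangular system, whereas the paper asserts existence and uniqueness of preimages $h_{i,j}$ of the $x^iy^j$ modulo $\field[\cals']$ by the same ``standard method'' (reducing to $h_{(e,0)}$ via multiplication by $\alpha_2^j$, with the explicit recursion supplied in Proposition \ref{prop:coefficients_h(e,0)}).
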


\begin{proof}
	Recall that $\bialg_2\hookrightarrow \bialg_1 \otimes \bialg_1 \cong \field[x^{\pm 1}, y^{\pm 1}] $ is induced
by localization of the algebra morphism $\field [\alpha_1, \alpha_2] \rightarrow \field [x,y]$ given by
\begin{eqnarray*}
\alpha_1 & \mapsto & x+y \\
\alpha_2 & \mapsto & x^2 y .
\end{eqnarray*}
To prove the result, it suffices to show that, for each $(i, j) \in \cals$, there exists a unique element $h_{i,j}$ of
$\field [\alpha_1 , \alpha_2^{\pm 1}] $ such that $h_{i,j}$ maps to $x^iy^j$ modulo $\field [\cals']$. Since $\alpha_2
^j h_{(e,0)}$ satisfies this condition, where $e= i - 2j$, it suffices to construct the elements $h_{(e,0)}$. The
existence and the unicity of these elements can be shown by standard methods, based on Lemma \ref{lem:admissible_order}
and the left lexicographical order on $\zed \times \zed$. (An explicit construction  of the elements $h_{e,0}$ is given by the following result.)
\end{proof}

\begin{prop}
\label{prop:coefficients_h(e,0)}
	The elements $h_{(e,0)}$, for $e \in \nat$, are defined recursively by
\begin{eqnarray*}
	h_{(0,0)} &=& 1\\
	h_{(e+1, 0) }& = &\alpha_1 h_{(e,0)} + \alpha_2 h_{(e-2, 0)},
\end{eqnarray*}
where $h_{(k,0)}$ is taken to be zero if $k <0$.

Moreover, the image of $h_{(e,0)}$ in $\field [x,y]$ is
\[
	x^e + \sum _u \binom{u-1}{2u-e -1} x^{e-u} y^u.
\]
\end{prop}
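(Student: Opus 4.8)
The plan is to introduce auxiliary elements $H_e \in \field[\alpha_1,\alpha_2] \subseteq \bialg_2$ defined by the asserted recursion ($H_0 = 1$, $H_{e+1} = \alpha_1 H_e + \alpha_2 H_{e-2}$, and $H_k = 0$ for $k<0$) and to identify them with the elements $h_{(e,0)}$ produced in Proposition \ref{prop:bialg_admissible}. Since that proposition guarantees that $h_{(e,0)}$ is the \emph{unique} element of $\bialg_2$ whose image in $\field[x,y]$ is congruent to $x^e$ modulo $\field[\cals']$, it suffices to show that each $H_e$ enjoys this same property; the recursion then holds tautologically and the formula becomes a formula for $\phi(h_{(e,0)})$. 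Writing $\phi : \field[\alpha_1,\alpha_2] \to \field[x,y]$ for the algebra map $\alpha_1 \mapsto x+y$, $\alpha_2 \mapsto x^2 y$ of Proposition \ref{prop:bialg_admissible}, the whole problem is thereby transported to an explicit computation with the polynomials $P_e := \phi(H_e)$, which satisfy $P_0 = 1$ and $P_{e+1} = (x+y)P_e + x^2 y\, P_{e-2}$.

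First I would establish the closed formula $P_e = x^e + \sum_{u \ge 1} \binom{u-1}{2u-e-1} x^{e-u} y^u$ by strong induction on $e$, the cases $e \le 1$ being immediate. For the inductive step I substitute the formulae for $P_e$ and $P_{e-2}$ into the recursion and read off the coefficient of a monomial $x^{e+1-u} y^u$. The three contributions come from $x\cdot P_e$, $y\cdot P_e$ and $x^2 y\cdot P_{e-2}$; matching their sum against the target coefficient $\binom{u-1}{2u-e-2}$ reduces, after setting $a = u-1$ and $k = 2u-e-2$, to the identity $\binom{a}{k} \equiv \binom{a}{k+1} + \binom{a-1}{k-1} + \binom{a-1}{k+1} \pmod 2$, which follows from two applications of Pascal's rule, the surplus term $2\binom{a-1}{k+1}$ vanishing modulo $2$. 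A little bookkeeping then handles the leading term $u=0$, the edge case $u=1$ (where the $y\cdot P_e$ and $x^2 y\cdot P_{e-2}$ contributions both meet the leading terms of $P_e$ and $P_{e-2}$, producing the characteristic-two cancellation $1+1 \equiv 0$ that annihilates the two $x^e y$ terms), and the degenerate range $P_{e-2}=0$ for $e<2$.

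It then remains to confirm that $P_e \equiv x^e \pmod{\field[\cals']}$, which forces $H_e = h_{(e,0)}$ by the uniqueness in Proposition \ref{prop:bialg_admissible}. Here I observe that $x^{e-u}y^u$ occurs with nonzero coefficient only when $0 \le 2u-e-1 \le u-1$, i.e. when $(e+1)/2 \le u \le e$; the lower bound gives $3u > e$, hence $e-u < 2u$ and $(e-u,u) \in \cals'$. Thus every term of $P_e$ other than $x^e$ lies in $\field[\cals']$, as needed, and both assertions follow at once: $H_e = h_{(e,0)}$ yields the recursion, while $\phi(h_{(e,0)}) = P_e$ is the stated image. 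The main obstacle is the inductive verification of the binomial formula, whose genuine content is the Pascal identity above together with the characteristic-two cancellation built into the recursion; the remaining ingredients — transport to $\field[x,y]$, the support check placing the correction terms in $\cals'$, and the appeal to uniqueness — are formal.
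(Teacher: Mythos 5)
Your proposal is correct and takes essentially the same route as the paper's proof: both arguments verify by induction (two applications of Pascal's rule, with the surplus term killed by characteristic $2$, and the two $x^e y$ terms cancelling) that the recursion produces the stated polynomial in $\field[x,y]$, observe that every correction term $x^{e-u}y^u$ lies in $\field[\cals']$ because the nonvanishing of $\binom{u-1}{2u-e-1}$ forces $e<3u$, and then invoke the uniqueness in Proposition \ref{prop:bialg_admissible} to identify the recursively defined elements with $h_{(e,0)}$. The only difference is the cosmetic ordering of these steps.
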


\begin{proof}
	The proof is by induction; for $e \leq 2$, the result is immediate, hence we may suppose that $e >2$.
Observe that the binomial coefficient is zero for $2u - e -1<0$; if $2u -e -1 \geq 0$, the excess corresponding to the
monomial $x^{e-u} y^u$ is $(e-u)-2u = e-3u$, which is negative (since $e$ is non-negative and $e-2u \leq -1$). Hence the
given polynomial has the required form. Thus, to establish the result, by unicity, it suffices to show that the given
monomials $h_{(e,0)}$ have the stated images.

The image of $\alpha_1 h_{(e,0)}$ in $\field [x,y]$ is
\[
	x^{e+1} + \Big \{ \sum _u \binom{u-1}{2u-e -1} x^{e-u+1}y^u \Big\} +  x^e y + \sum _v \binom{v-2}{2v-e -3}
x^{e-v+1}y^v,
\]
where the second sum has been reindexed by $v=u+1$.

Similarly, the image of $\alpha_2 h_{(e-2, 0)}$ is
\[
	x^ e y + \sum_v \binom{v-2}{2v-e -1} x^{e-v+1} y ^v.
\]
Using the relations
\[
	\binom {u-2}{2u-e -2} + \binom{u-2}{2 u -  e -3} = \binom {u-1}{2u -e -2} = \binom {u-1}{2u - (e+1) -1}
\]
completes the inductive step.
\end{proof}

\begin{cor}
\label{cor:f_bialg}
The underlying quadratic coalgebra of $\bialg$ is isomorphic to $\langle \zed; \cals , f \rangle$, where the
function $f : \cals \times \cals' \rightarrow \field$  is given on pairs $((i,j), (l,m)) \in \cals \times \cals'$ such
that $i+j = l+m$ by
\[
	f((i,j), (l,m))
=
\binom{m-j-1}{2 (m-j)- e -1}
=
\binom{m-j-1}{m+j-l-1}
,
\]
where $e = i -2j$; on pairs such that $i+j \neq l+m$, the function is trivial.

In particular, the quadratic coalgebra $\langle \zed; \cals , f \rangle$ is dualizable.
\end{cor}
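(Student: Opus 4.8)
The plan is to read the structure function $f$ directly off the admissible basis produced in Proposition \ref{prop:bialg_admissible} and then to verify the two closed forms and the finiteness of support. By that proposition $\bialg$ is $(\zed,\cals)$-admissible, so it is isomorphic to $\langle \zed; \cals, f\rangle$ for a uniquely determined $f$, and the defining relation (\ref{eqn:basic_admissible}) identifies $f\big((i,j),(l,m)\big)$ (for $(l,m)\in\cals'$) as the coefficient of the monomial $x^l y^m$ in the image of the cogenerator $h_{i,j}$ under the inclusion $\bialg_2 \hookrightarrow \bialg_1\otimes\bialg_1 \cong \field[x^{\pm1},y^{\pm1}]$; over $\field$ the sign appearing in (\ref{eqn:basic_admissible}) is irrelevant.

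First I would compute this image explicitly. Using $h_{i,j}=\alpha_2^{\,j}h_{(e,0)}$ with $e=i-2j$ (from the proof of Proposition \ref{prop:bialg_admissible}), together with $\alpha_2\mapsto x^2y$ and the image of $h_{(e,0)}$ recorded in Proposition \ref{prop:coefficients_h(e,0)}, multiplication by $x^{2j}y^{j}$ yields
\[
	h_{i,j}\ \mapsto\ x^{i}y^{j}+\sum_u \binom{u-1}{2u-e-1}\, x^{\,i-u}y^{\,j+u}.
\]
Every monomial occurring satisfies $l+m=(i-u)+(j+u)=i+j$, which accounts for the vanishing of $f$ off the diagonal $i+j=l+m$. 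Putting $u=m-j$ then reads off the first expression $f\big((i,j),(l,m)\big)=\binom{m-j-1}{2(m-j)-e-1}$.

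The second form is a one-line reindexing: with $e=i-2j$ one has $2(m-j)-e-1=2m-i-1$, and on the diagonal $l=i+j-m$, so $2m-i-1=m+j-l-1$, giving $\binom{m-j-1}{m+j-l-1}$. For dualizability I would fix $(l,m)\in\cals'$ and bound the support of $f(-,(l,m))$: writing $k=m-j$ and $d=2m-l$ (with $d>0$ precisely because $(l,m)\in\cals'$), the coefficient becomes $\binom{k-1}{d-k-1}$, which vanishes unless $\lceil d/2\rceil\le k\le d-1$. This confines $j$, hence $(i,j)=(l+m-j,j)$, to a finite set, so $f(-,(l,m))$ has finite support and $\langle\zed;\cals,f\rangle$ is dualizable. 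The computations in the first step are routine; the only points requiring care — and the site of the modest obstacle — are keeping the two binomial presentations consistent and checking the combinatorial vanishing range that delivers finite support.
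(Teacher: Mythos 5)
Your proposal is correct and follows essentially the same route as the paper: both read $f$ off the admissible basis of Proposition \ref{prop:bialg_admissible} using $h_{i,j}=\alpha_2^{\,j}h_{(e,0)}$ together with the coefficients of Proposition \ref{prop:coefficients_h(e,0)}, and both obtain dualizability by bounding the range of $j$ for which the binomial coefficient $\binom{m-j-1}{m+j-l-1}$ can be nonzero. Your explicit reindexing ($k=m-j$, $d=2m-l$) and the verification of the second closed form are just slightly more detailed versions of the steps the paper leaves implicit.
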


\begin{proof}
The proof of Proposition \ref{prop:bialg_admissible} indicates that there is a relation
\[
 f ((i,j), (l,m) ) = f ((i-2j, 0), (l-2j, m-j))
\]
and $i-2j$ is the excess $e$. Moreover, Proposition \ref{prop:coefficients_h(e,0)} shows that
\[
 f((e,0), (u,v)) = \binom {u-1}{2u -e -1}
\]
if $u+v= e$ and is trivial otherwise.

The first identity follows immediately (using the fact that $i+j = l+m$ is equivalent to the corresponding condition
$u+v=e$, where $u=l-2j$, $v=m-j$).

To show that $\langle \zed; \cals , f \rangle$ is dualizable, it is sufficient to show that, for a fixed $(l,m) \in
\cals'$, the set of $(i, j) \in \cals$ such that  $f((i,j),(l,m)) \neq 0$ is finite.

It suffices to consider $(i, j) \in \cals$ such that $i+j = l+m$. The binomial coefficient
\[
 \binom{m-j-1}{m+j-l-1}
\]
is zero if $m-j-1<0$ or if $m+j -l-1<0$. It follows that there is a finite interval of values of $j$ for which the
binomial coefficient can be non-trivial. This implies the result.

\end{proof}

\begin{cor}
	The quadratic coalgebra $\bialg^+$ is $(\nat, \cals_\nat)$-admissible.
\end{cor}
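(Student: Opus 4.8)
The plan is to realise $\bialg^+$ as a \emph{pullback} of $\bialg$ and then to invoke the admissibility criterion of Proposition \ref{prop:admissible_pullback}. Recall from Proposition \ref{prop:rep-graded-monoids} that the inclusion $\monoid \hookrightarrow \monoid^+$ is represented by the sub-bialgebra $\bialg^+ \hookrightarrow \bialg$, which on degree-one cogenerators is the inclusion $\bialg^+_1 = \field[\alpha_1] \hookrightarrow \field[\alpha_1^{\pm 1}] = \bialg_1$. Under the identification $\bialg_1 \cong \field[\zed]$ this is the subspace $\field[\nat] \hookrightarrow \field[\zed]$ indexed by $\calj := \nat \subset \zed = \cali$. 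Since $\bialg^+$ is quadratic and cogenerated in degree one (Theorem \ref{thm:bialg_quad} and Proposition \ref{prop:cogeneration}), the first step is to check that $R \cap (\field[\nat])^{\otimes 2}$ recovers $\bialg^+_2 = \field[\alpha_1,\alpha_2]$ inside $\field[x,y] \subset \field[x^{\pm 1}, y^{\pm 1}]$, so that $\bialg^+ \cong \langle \field[\nat]; R_{\field[\nat]}\rangle$; this amounts to the observation that inverting $\alpha_2 = x^2 y$ necessarily introduces negative powers of $x$ and $y$.

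Granting this identification, $\bialg^+$ is the pullback of $\bialg \cong \langle \zed; \cals, f\rangle$ along $\nat \subset \zed$, so by Proposition \ref{prop:admissible_pullback} it is $(\nat, \cals_\nat)$-admissible if and only if $f((i,j),(l,m)) = 0$ for every $(i,j) \in \cals_\nat$ and every $(l,m) \in \cals' \backslash \cals'_\nat$. By definition of $\cals_\nat$ one has $j \geq 0$ and $i \geq 2j$, while membership in $\cals' \backslash \cals'_\nat$ forces $l < 0$ or $m < 0$. Thus the statement reduces to a vanishing property of the structure function $f$ of Corollary \ref{cor:f_bialg}.

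I would establish this vanishing structurally rather than from the binomial formula. For $(i,j) \in \cals_\nat$ the admissible cogenerator is $h_{(i,j)} = \alpha_2^{j} h_{(i-2j,0)}$; since $j \geq 0$ and $i - 2j \geq 0$, Proposition \ref{prop:coefficients_h(e,0)} shows that this lies in $\field[\alpha_1,\alpha_2] = \bialg^+_2$, and hence its image in $(\bialg^+_1)^{\otimes 2} = \field[x,y]$ is a genuine polynomial, all of whose monomials $x^l y^m$ have $l \geq 0$ and $m \geq 0$. By the defining equation (\ref{eqn:basic_admissible}) of an admissible coalgebra, $f((i,j),(l,m))$ is exactly the coefficient of $x^l y^m$ in this image (the sign being irrelevant over $\field = \mathbb{F}_2$); it therefore vanishes unless $(l,m) \in \nat^{\times 2}$, that is, unless $(l,m) \in \cals'_\nat$. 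In particular $f((i,j),(l,m)) = 0$ whenever $(l,m) \in \cals' \backslash \cals'_\nat$, which is precisely the criterion of Proposition \ref{prop:admissible_pullback}.

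The computations here are negligible; the one step requiring genuine care is the pullback identification $\bialg^+ \cong \langle \field[\nat]; R_{\field[\nat]}\rangle$, i.e.\ checking that the intersection $R \cap (\field[\nat])^{\otimes 2}$ is no larger than $\bialg^+_2$. Once positivity of the exponents is used to pin this down, the remaining vanishing of $f$ is immediate from the polynomiality of $h_{(i,j)}$ for $j \geq 0$, so the main obstacle is organisational: correctly matching the pullback construction to the sub-bialgebra $\bialg^+ \hookrightarrow \bialg$.
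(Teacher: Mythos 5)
Your route is the same as the paper's: its proof of this corollary is precisely ``an immediate consequence of Proposition \ref{prop:admissible_pullback}'', with the required vanishing of $f$ read off from the binomial formula of Corollary \ref{cor:f_bialg} (for $(i,j)\in\cals_\nat$, $(l,m)\in\cals'\setminus\cals'_\nat$ and $i+j=l+m$, either $m<0$, which makes the upper index $m-j-1$ negative, or $l<0$, which makes the lower index $m+j-l-1$ exceed the upper one). Your verification of the vanishing is a structural variant, and it is correct: for $j\geq 0$ and $i-2j\geq 0$, the element $h_{(i,j)}=\alpha_2^{j}h_{(i-2j,0)}$ lies in $\field[\alpha_1,\alpha_2]=\bialg^+_2$ by Proposition \ref{prop:coefficients_h(e,0)}, its image in $\field[x,y]$ is a genuine polynomial, and by (\ref{eqn:basic_admissible}) the values $f((i,j),(l,m))$ are its coefficients on $\cals'$-monomials, hence vanish off $\nat^{\times 2}$. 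This is essentially the ``direct'' argument the paper alludes to in its parenthetical remark.

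The one real gap is the step you flag yourself: the identification $\bialg^+\cong\langle \field[\nat];R_{\field[\nat]}\rangle$, i.e.\ $R\cap\field[x,y]=\bialg^+_2$. ``Inverting $\alpha_2=x^2y$ necessarily introduces negative powers'' is not a proof: a general element of $\bialg_2=\bialg^+_2[\alpha_2^{-1}]$ is a \emph{sum} of such terms, and you must exclude cancellations producing a polynomial that nonetheless lies outside $\field[\alpha_1,\alpha_2]$. Two ways to close it. (a) Graded divisibility: it suffices to show $\bialg^+_2\cap\alpha_2\field[x,y]=\alpha_2\bialg^+_2$ and induct on the power of $\alpha_2$; modulo $\alpha_2\bialg^+_2$ an element of $\bialg^+_2$ is a polynomial $p(x+y)$, and $p(x+y)\in(x^2y)\field[x,y]$ forces $p=0$ upon setting $y=0$. (b) Better, reuse your own machinery: write $X\in R\cap\field[x,y]$ as $\sum_{(i,j)\in\cals}c_{i,j}h_{(i,j)}$; since the image of $h_{(i,j)}$ is $x^iy^j$ plus $\cals'$-monomials and $\cals\cap\cals'=\emptyset$, polynomiality of $X$ forces $c_{i,j}=0$ whenever $j<0$, i.e.\ whenever $(i,j)\notin\cals_\nat$, whence $X\in\bialg^+_2$. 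Note that with (b) in hand you could skip the pullback mechanism entirely: the $h_{(i,j)}$ with $(i,j)\in\cals_\nat$ form a basis of $\bialg^+_2$ (unitriangular against the monomials $\alpha_1^{i-2j}\alpha_2^{j}$ in each internal degree) projecting to the canonical basis of $\field[\cals_\nat]$, which is literally the definition of $(\nat,\cals_\nat)$-admissibility. In fairness, the paper is just as silent about this identification as you are; your write-up at least isolates it as the point requiring care.
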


\begin{proof}
	An immediate consequence of Proposition \ref{prop:admissible_pullback}. (The result can be seen directly.)
\end{proof}

\subsection{The weak coPBW property}

\begin{prop}
\label{prop:weak_coPBW}
 The $(\zed, \cals)$-admissible quadratic bialgebra $\bialg$ satisfies the weak coPBW property.
\end{prop}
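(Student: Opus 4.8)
The plan is to use the bialgebra structure to present $\bialg_n$ concretely inside $(\bialg_1)^{\otimes n}$ and then to run a leading-term argument with respect to a monomial order, dual to the standard detection of a PBW basis for a quadratic algebra.

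Since $\bialg$ is a bialgebra which is cogenerated in degree one (Proposition \ref{prop:cogeneration}), the iterated coproduct $\Delta^{(n)}$ embeds $\bialg_n$ as the subalgebra of $(\bialg_1)^{\otimes n} \cong \field[u_1^{\pm 1}, \dots, u_n^{\pm 1}]$ generated by the images $\alpha'_1, \dots, \alpha'_{n-1}, (\alpha'_n)^{\pm 1}$ of the algebra generators $\alpha_k$. With respect to this identification the monomials $u_1^{i_1}\cdots u_n^{i_n}$ form the basis of $(\bialg_1)^{\otimes n}$ indexed by $\cali^{\times n} = \zed^{n}$, and $\cals^{(n)}$ is precisely the set of exponent vectors $(i_1, \dots, i_n)$ with $i_k \geq 2 i_{k+1}$ for $1 \leq k < n$. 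The weak coPBW morphism is the projection onto the span of these admissible monomials, so it suffices to exhibit a basis of $\bialg_n$ which is unitriangular over $\{ u^{(i)} \mid (i) \in \cals^{(n)} \}$.

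The central computation is the leading monomial of $\alpha'_k$ for the lexicographic order in which $u_1$ is most significant and a larger exponent is ranked higher (so that, for $n=2$, Lemma \ref{lem:admissible_order} says the admissible leading term is the largest). Factoring $\Delta^{(n)} = (\Delta^{(n-1)} \otimes \mathrm{id}) \circ \Delta_{n-1,1}$ and inserting the explicit formulas for $\Delta_{n-1,1}$ obtained in the proof of Theorem \ref{thm:bialg_quad}, one finds that $\alpha'_k$ is the sum of $u_n$ times the square of the image of $\beta_{k-1}$ and the image of $\beta_k$; by induction on $n$ the image of $\beta_j$ has leading monomial $\prod_{i=1}^{j} u_i^{2^{j-i}}$, and comparing the two summands shows $\mathrm{lead}(\alpha'_k) = \prod_{i=1}^{k} u_i^{2^{k-i}}$. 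As $(\bialg_1)^{\otimes n}$ is a Laurent polynomial domain the order is multiplicative, so the basis monomial $M_a := \prod_k (\alpha'_k)^{a_k}$ (with $a_k \geq 0$ for $k < n$ and $a_n \in \zed$) has leading term $u^{\iota(a)}$, where $\iota(a)_j = \sum_{k \geq j} a_k 2^{k-j}$. Since $i_j - 2 i_{j+1} = a_j$, the map $a \mapsto \iota(a)$ is a bijection onto $\cals^{(n)}$: the inequalities $i_j \geq 2 i_{j+1}$ are exactly the conditions $a_j \geq 0$.

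Projecting, each $M_a$ is sent to $u^{\iota(a)}$ together with a combination of strictly smaller admissible monomials of the same internal degree. The remaining point — and the main obstacle, since for $n \geq 2$ the graded pieces of $\bialg_n$ are infinite-dimensional — is to invert this unitriangular system. For this I would verify that, within a fixed internal degree, the admissibility inequalities force the first coordinate $i_1$ to be bounded below, and that for each value of $i_1$ the remaining coordinates range over a finite set (by induction on $n$); hence the lexicographic order restricts on each graded piece to a well-order of type at most $\omega$ with finite initial segments. The change of basis can then be inverted degree by degree and, within a degree, along this well-order, yielding that $\bialg_n \to \kfield[\cals^{(n)}]$ is an isomorphism for all $n$, which is the weak coPBW property.
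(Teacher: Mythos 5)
Your proof is correct, and it is in fact more complete than the paper's, which consists only of indications: the paper reduces to $\bialg^+$ by a localization argument and then asserts that standard inductive calculations apply, generalizing the methods of Theorem \ref{thm:bialg_quad} and Proposition \ref{prop:bialg_admissible} (citing Singer for an analogous statement). Your route is the same in spirit --- a lexicographic leading-term/triangularity argument, exactly as in the proof of Proposition \ref{prop:bialg_admissible} --- but differs genuinely in execution. First, you work directly with $\bialg$ over $\cali = \zed$ instead of reducing to the finite-type subcoalgebra $\bialg^+$; this forces you to confront the infinite-dimensionality of the graded pieces, and your resolution is sound: summing the inequalities $i_k \leq i_1/2^{k-1}$ over $k$ gives $i_1 \geq d/(2 - 2^{1-n})$ in internal degree $d$, so initial segments of the lex order on the admissible monomials of a fixed degree are finite and the unitriangular system inverts by finite recursion. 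The paper's reduction to $\bialg^+$ is designed precisely to avoid this issue, since there each graded piece is finite-dimensional and injectivity (from distinct leading terms) plus a dimension count via your bijection $a \mapsto \iota(a)$ already suffices; what your approach buys is a proof that does not pass through the colimit/localization comparison of $\kfield[\cals^{(n)}]$ with shifted copies of $\kfield[(\cals_\nat)^{(n)}]$. Second, your leading-term computation is organized multiplicatively: you compute $\mathrm{lead}(\alpha'_k) = \prod_{i \leq k} u_i^{2^{k-i}}$ once and obtain the whole triangular basis $\{M_a\}$ from multiplicativity of the order in a domain, rather than constructing triangular elements one at a time as the recursive construction of the $h_{(e,0)}$ suggests. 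Two small points to make explicit: the identification of $\langle \field[\zed]; R \rangle_n$ with the image of $\bialg_n$ under the iterated coproduct uses quadraticity (Theorem \ref{thm:bialg_quad}), not merely one-cogeneration (Proposition \ref{prop:cogeneration}); and for negative powers of $\alpha'_n$ you should note that $\alpha'_n$ is itself the monomial $\prod_{i} u_i^{2^{n-i}}$, so $(\alpha'_n)^{a_n}$ with $a_n \in \zed$ causes no difficulty with leading terms. Injectivity, which you do not address separately, is immediate from your setup (the lex-largest monomial of a nonzero element of $\bialg_n$ is admissible, hence survives the projection), so nothing is missing.
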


\begin{proof}
 (Indications.) It is straightforward to see that it is sufficient to prove the result for $\bialg^+$, which is $(\nat,
\cals_\nat)$-admissible. Here standard inductive calculations apply, generalizing the methods used in the proof that
$\bialg^+$ is quadratic and that it is $(\nat, \cals_\nat)$-admissible. (Cf. \cite[Lemma 2.7]{singer_invt_lambda}, which
 establishes an analogous result.)
\end{proof}

\begin{rem}
\begin{enumerate}
\item
 A detailed proof is not given, since this result is related to the fact that the universal Steenrod algebra has a PBW
basis. (See the next section.)
\item
 The existence of such a basis is  intimately related to the argument used by Singer in \cite[Proposition 8.1]{singer_invt_lambda} to prove that the dual of the opposite
 of the Lambda algebra is a quotient coalgebra of $\Gamma$ (compare Proposition \ref{prop:copbw_surjectivity}).
\end{enumerate}
\end{rem}

\subsection{Self-duality}

Recall that the coalgebra underlying the bialgebra $\bialg$ is  isomorphic to the quadratic coalgebra $\langle \zed;
\cals , f \rangle $ and is dualizable;  the transpose dual coalgebra is $\langle \zed; \cals', f^! \rangle$.

\begin{thm}
\label{thm:bialg_selfdual}
 The quadratic coalgebra $\langle \zed; \cals , f \rangle $ is strictly  self-dual with respect to the
bijection $\sigma : \zed \rightarrow \zed$, $n \mapsto 1 -n $.
\end{thm}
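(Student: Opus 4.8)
The plan is to verify directly the two defining conditions of strict self-duality for the bijection $\sigma : n \mapsto 1-n$, which is evidently an involution of $\zed$. Throughout I write $\sigma$ also for the induced involution $(i,j) \mapsto (1-i,1-j)$ of $\zed \times \zed$. For the first condition I must show that $\sigma$ carries $\cals$ bijectively onto $\cals'$; since $\sigma$ is an involution it suffices to check one inclusion. This is the elementary equivalence $i \geq 2j \Leftrightarrow 1-i < 2(1-j)$ (both reduce to $i \geq 2j$ over the integers), which gives $\sigma(\cals) = \cals'$ and hence $\sigma(\cals') = \cals$.

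For the second condition I must establish $f = f^! \circ (\sigma \times \sigma)$, which (using $f^!(\sigma(s),\sigma(s')) = f(\sigma(s'),\sigma(s))$) unwinds to the identity
\[
	f\big((i,j),(l,m)\big) = f\big((1-l,1-m),(1-i,1-j)\big)
\]
for all $(i,j) \in \cals$ and $(l,m) \in \cals'$. First I would check that both sides vanish unless $i+j = l+m$: by the first part $(1-l,1-m) \in \cals$ and $(1-i,1-j) \in \cals'$, and their total degrees are $2-l-m$ and $2-i-j$, so the support condition on the right is precisely $l+m = i+j$, matching the left. On this diagonal I then substitute into the explicit formula of Corollary \ref{cor:f_bialg}.

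The computation then reduces to binomial symmetry. Writing $f((i,j),(l,m)) = \binom{m-j-1}{2m-i-1}$, the left-hand side is $\binom{m-j-1}{2m-i-1}$, while setting $(I,J)=(1-l,1-m)$, $(L,M)=(1-i,1-j)$ gives the right-hand side $\binom{M-J-1}{2M-I-1} = \binom{m-j-1}{l-2j}$. Both binomials have the same upper entry $m-j-1$, and substituting $l = i+j-m$ shows the lower entries are complementary:
\[
	(2m-i-1) + (l-2j) = m-j-1.
\]
Hence the symmetry $\binom{N}{k} = \binom{N}{N-k}$ yields the desired equality. With the convention of Corollary \ref{cor:f_bialg} that $\binom{N}{k}$ vanishes whenever $N<0$, $k<0$, or $k>N$, this symmetry holds with no exceptional cases (when $m-j-1<0$ both sides are zero), so the identity is valid over $\field$.

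The argument carries essentially no analytic content; the only genuine work is the index bookkeeping, namely transporting the involution $\sigma$ through the binomial formula and verifying the complementarity of the lower indices. That single arithmetic step is where an off-by-one slip could occur, so I would carry it out carefully, double-checking via the relation $2m-i-1 = 2(m-j)-e-1$ with $e = i-2j$; but it presents no real obstacle, and the two conditions together establish strict self-duality.
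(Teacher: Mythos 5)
Your proof is correct and takes essentially the same route as the paper's: verify that $\sigma$ exchanges $\cals$ and $\cals'$, reduce to the diagonal $i+j=l+m$, transport the formula of Corollary \ref{cor:f_bialg} through $\sigma$, and conclude by the complementary symmetry $\binom{N}{k}=\binom{N}{N-k}$ of the lower indices. Your handling of the support condition and the vanishing conventions for the binomial coefficients is a bit more explicit than the paper's, but the underlying computation is identical.
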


\begin{proof}
 It is elementary to check that $\sigma$ restricts to a bijection between $\cals$ and $\cals'$ (note  that
$\sigma^2$ is the identity). Hence, it suffices to show that, for $((i,j),(l,m)) \in \cals \times \cals'$, there is an
equality
 \[
  f ((i,j) , (l,m)) = f ((\sigma (l,m) , \sigma (i,j))= f ((1-l, 1-m), (1-i, 1-j)).
 \]
Clearly it is sufficient to restrict to the case $i+j = l+m$. The right hand side is given by the binomial coefficient
\[
 \binom{(1-j)-(1-m)-1}{(1-j)+(1-m) - (1-i)-1 } = \binom {m-j-1}{i-m-j}.
\]
This is equal to $f ((i,j) , (l,m))$, since $i-m-j=l-2j$ (using $i+j = l+m$) and
$m+j -l-1 = (m-j-1)-(l-2j)$.
\end{proof}

\section{Applications to the universal Steenrod algebra}
\label{sect:universal}

\subsection{The universal Steenrod algebra}

The universal Steenrod algebra was introduced by May in \cite{may_gen_alg} (under the name big Steenrod algebra) in his
general algebraic approach to
the construction of Steenrod operations; here we consider only the mod-$2$ case. The terminology universal Steenrod
algebra was introduced by Lomonaco, since the Dyer-Lashof algebra, the opposite of the Lambda algebra and the Steenrod
algebra all appear as sub-quotients of the algebra. No algebraic universal property is claimed.

The following presentation suffices here:

\begin{defn}(Cf. \cite{bcl_cohomology}, for example.)
 The universal Steenrod algebra over $\field$ is the quadratic algebra $\univsteen$ generated by elements $\{ y^i | i
\in \nat \}$, with $y_i$ of internal degree $i$, subject to the generalized Adem relations
\[
 y_{2k -1 -n} y_k
 =
 \sum_j
 \binom {n-1-j}{j} y_{2k -1 -j} y_{k+j -n}.
\]
\end{defn}

\begin{lem}
\label{lem:gen_Adem_relations}
	The generalized Adem relations are equivalent to:
\[
	y_u y_v =
\sum_m
\binom{v-m-1}{v+m-u-1} y_{u+v -m} y_m,
\]
where $u <2v$.
\end{lem}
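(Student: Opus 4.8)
The plan is to show that the two families of relations coincide after a change of the indexing variables, so that the equivalence is purely a matter of reindexing the summation. First I would introduce new parameters by setting $v := k$ and $u := 2k - 1 - n$ (equivalently $k = v$ and $n = 2v - 1 - u$), so that the left-hand side $y_{2k-1-n} y_k$ of the generalized Adem relation becomes exactly $y_u y_v$. Under this dictionary the nonnegativity constraint $n \geq 0$ implicit in the original relation translates precisely into the stated hypothesis $u < 2v$, since $n = 2v - 1 - u \geq 0$ holds if and only if $u \leq 2v - 1$, that is $u < 2v$; the correspondence $(k,n) \leftrightarrow (u,v)$ is thus a bijection between admissible index pairs.

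Next I would reindex the sum on the right. The substitution is $m := k + j - n$, that is $j = m + n - k$, which is a bijective shift of the summation variable $j \mapsto m$. Under it the product $y_{2k-1-j} y_{k+j-n}$ becomes $y_{u+v-m} y_m$: the second factor is $y_m$ by the definition of $m$, and for the first one checks, using $u + v = 3k - 1 - n$, that $2k - 1 - j = 2k - 1 - (m+n-k) = 3k - 1 - n - m = u + v - m$. It then remains to match the binomial coefficients, and I would verify by direct substitution of $j = m + n - k$ together with $u = 2k-1-n$ and $v = k$ that the numerator satisfies $n - 1 - j = v - m - 1$ and the denominator satisfies $j = v + m - u - 1$. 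This gives the term-by-term identity $\binom{n-1-j}{j} = \binom{v-m-1}{v+m-u-1}$, completing the translation.

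The argument presents no genuine obstacle beyond bookkeeping; the only point requiring a little care is confirming that the change of variable $j \mapsto m$ is a true bijection between the two ranges of summation, so that no terms are gained or lost. Since both sums run over all integers and the $\field$-valued binomial coefficients vanish outside a finite window, this is automatic. I would display the computation in a way that makes the passage manifestly reversible, so that each relation is recovered from the other, which is what the stated equivalence requires.
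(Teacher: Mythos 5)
Your proof is correct and matches the paper's (implicit) approach: the paper states this lemma without proof, treating it as the routine reindexing you carry out, namely $(u,v)=(2k-1-n,\,k)$, $m=k+j-n$, under which the constraint $n\geq 0$ becomes $u<2v$ and $\binom{n-1-j}{j}$ becomes $\binom{v-m-1}{v+m-u-1}$. Your substitutions check out exactly, and the remark about bijectivity of the index shift and finiteness of the sums is the right (if minor) point of care.
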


\begin{defn}
 Let  $\atilde$ denote the quotient of $\univsteen$ by the ideal generated by $\{ y_j | j <0 \}$. Equivalently,
$\atilde$ is the quadratic algebra generated by the
elements $\{ Sq ^i | i \in \nat \}$ (with internal degree $|Sq^i | = i$) subject to the Adem relations (without the
condition $Sq^0= 1$). (This algebra is denoted $\mathbf{\mathcal{B}}$ in \cite{singer_hopf_ops}.)
\end{defn}

\subsection{Self-duality and the relationship between $\bialg$ and $\univsteen$}

Transpose duality  for admissible quadratic coalgebras has an obvious counterpart for quadratic algebras, as does  self-duality:

\begin{defn}
Let $\cali$ be a set and $\cals \subset \cali \times \cali$ be a subset. A quadratic algebra $\{ \kfield [\cali]; R
\}$  is $(\cali, \cals)$-admissible if and only if the quadratic coalgebra
$\langle \kfield [\cali]; R \rangle$ is $(\cali, \cals)$-admissible; in this case, the quadratic algebra $\{ \kfield
[\cali]; R
\}$
\begin{enumerate}
\item
is dualizable if and only if $\langle \kfield [\cali]; R \rangle$ is dualizable;
\item
is strictly self-dual if and only if $\langle \kfield [\cali]; R \rangle$ is strictly self-dual.
\end{enumerate}
\end{defn}

Recall that the bialgebra $\bialg$ is isomorphic (as a graded coalgebra)  to the admissible coalgebra
$\langle \zed; \cals , f \rangle$, in the notation of Section \ref{sect:duality}.

\begin{thm}
\label{thm:univsteen_self-dual}
\
\begin{enumerate}
\item
 The universal Steenrod algebra $\univsteen$ is isomorphic to the quadratic algebra associated to the quadratic
coalgebra $\langle \zed; \cals' , f ^! \rangle$
\item
 The universal Steenrod algebra $\univsteen$ is dualizable and is strictly self-dual.
\end{enumerate}
\end{thm}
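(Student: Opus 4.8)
The plan is to deduce part (1) by recognizing the generalized Adem relations as the defining relations, written in admissible form, of the transpose dual coalgebra, and then to obtain part (2) formally from the strict self-duality of $\bialg$ established in Theorem \ref{thm:bialg_selfdual}.

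For part (1), I would read off the relations of the quadratic algebra associated to $\langle \zed; \cals', f^!\rangle$ directly from the admissibility data. By Corollary \ref{cor:f_bialg} the coalgebra underlying $\bialg$ is $\langle \zed; \cals, f\rangle$ with $f((i,j),(l,m)) = \binom{m-j-1}{m+j-l-1}$ whenever $i+j=l+m$ and zero otherwise. Forming the transpose dual interchanges $\cals$ and $\cals'$ and replaces $f$ by $f^!$, so by equation (\ref{eqn:basic_admissible}) applied to $\langle \zed; \cals', f^!\rangle$ the relation subspace is spanned, for each $(l,m)\in\cals'$, by $[(l,m)] - \sum_{(i,j)\in\cals} f((i,j),(l,m))\,[(i,j)]$. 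Under the equivalence $\langle V;R\rangle \leftrightarrow \{V;R\}$, and writing $y_n$ (for $n\in\zed$) for the generator indexed by $n$, this is the relation
\[
y_l y_m = \sum_{(i,j)\in\cals,\; i+j=l+m} f((i,j),(l,m))\, y_i y_j \qquad (l<2m).
\]
Substituting $i=l+m-j$ and taking $j$ as the summation variable, the coefficient is $\binom{m-j-1}{m+j-l-1}$, which is exactly the coefficient in the form of the generalized Adem relations given in Lemma \ref{lem:gen_Adem_relations} (with $(u,v)=(l,m)$). Thus $\univsteen$ is $(\zed,\cals')$-admissible with structure function $f^!$, and so is isomorphic to the quadratic algebra associated to $\langle \zed; \cals', f^!\rangle$.

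For part (2), I would use the definitions opening Section \ref{sect:universal}, by which a quadratic algebra is dualizable (resp. strictly self-dual) precisely when the associated coalgebra is; it therefore suffices to treat $\langle \zed; \cals', f^!\rangle$. Dualizability is immediate: admissibility of $\langle \zed; \cals, f\rangle$ (Proposition \ref{prop:bialg_admissible}) gives finite support of each $f(s,-)$ and the dualizability of Corollary \ref{cor:f_bialg} gives finite support of each $f(-,s')$, which are exactly the two finiteness conditions making the transpose dual admissible and dualizable. For strict self-duality, Theorem \ref{thm:bialg_selfdual} supplies the bijection $\sigma: n\mapsto 1-n$ with $\sigma(\cals)=\cals'$, $\sigma^2=\mathrm{id}$, and $f=f^!\circ(\sigma\times\sigma)$. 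Precomposing with $\sigma\times\sigma$ and using $\sigma^2=\mathrm{id}$ gives $f^! = f\circ(\sigma\times\sigma) = (f^!)^!\circ(\sigma\times\sigma)$, while $\sigma$ carries $\cals'$ bijectively onto $\cals=(\cals')'$; this is precisely strict self-duality of $\langle \zed; \cals', f^!\rangle$ with respect to the same $\sigma$. Transporting back along part (1) shows that $\univsteen$ is dualizable and strictly self-dual.

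The step I expect to be most delicate is the index bookkeeping in part (1): correctly distinguishing the integer labelling a generator from the pair labelling a quadratic monomial, reading the relations of the transpose dual off the admissibility function without transposing or reindexing erroneously, and checking that the binomial coefficient and the summation range coincide with Lemma \ref{lem:gen_Adem_relations} on the nose rather than after a further change of variables. Part (2), by contrast, is essentially formal once the involution $\sigma^2=\mathrm{id}$ is exploited.
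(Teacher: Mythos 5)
Your proposal is correct and follows the same route as the paper, whose proof is just the two-line instruction to compare the coefficients in Lemma \ref{lem:gen_Adem_relations} with the function $f$ of Corollary \ref{cor:f_bialg} and to deduce the second part from Theorem \ref{thm:bialg_selfdual}; your write-up simply makes both comparisons explicit (including the correct reading of the relations of $\langle \zed; \cals', f^!\rangle$ via equation (\ref{eqn:basic_admissible}) and the formal $\sigma^2=\mathrm{id}$ argument for transporting strict self-duality to the transpose dual). The only point you flag but do not verify in detail, that the summation range in Lemma \ref{lem:gen_Adem_relations} matches the sum over $(i,j)\in\cals$, does hold because the binomial coefficient $\binom{m-j-1}{m+j-l-1}$ vanishes whenever $(l+m-j,j)\notin\cals$, a check the paper likewise omits.
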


\begin{proof}
For the first statement, compare the coefficients of the generalized Adem relations (in the form given in Lemma
\ref{lem:gen_Adem_relations}) with the function $f$.

The second statement follows from  Theorem \ref{thm:bialg_selfdual}.
\end{proof}

\begin{rem}
\begin{enumerate}
 \item
It is known that the universal Steenrod algebra can be constructed as the
quadratic algebra $\{ \field [\zed]; \Gamma_2 \}$, where $\Gamma_2$ is the degree two part of Singer's graded bialgebra
$\Gamma$ \cite{lomonaco_dickson}. This presentation of the universal Steenrod algebra \cite{lomonaco_dickson} can be recovered using
self-duality.

Quadratic self-duality can be deduced from  \cite{lomonaco_reciprocity} (where a weaker reciprocity result is established, since the considerations are motivated by the study of Koszul, finite-type objects). Moreover, the
calculation of the diagonal cohomology of the universal Steenrod algebra \cite{lomonaco_diagonal} is essentially
equivalent to this theorem, as is the main result of \cite{lomonaco_coalgebra}.
\item
 The reciprocity results of Section \ref{subsect:quadratic_push_pull},   Proposition \ref{prop:self-dual-reciprocity} and Proposition \ref{prop:admissible-reciprocity}, apply to the bialgebra $\bialg$ to  recover the results of Lomonaco \cite{lomonaco_reciprocity}.
\end{enumerate}
\end{rem}

Recall that each algebra $\bialg^+_n$ is of finite type (with respect to the internal degree), hence taking the vector space dual (that is the dual in the naïve sense) yields a quadratic algebra.

\begin{cor}
\cite[Theorem 1.2, Proposition 2.11]{singer_hopf_ops}
\label{cor:bialg}
	The quadratic algebra $\atilde$ is the  dual of the quadratic bialgebra $\bialg^+$.
\end{cor}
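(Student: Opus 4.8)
The plan is to show that the vector space dual of $\bialg^+$ is precisely the quadratic algebra $\atilde$, by combining the self-duality established in Theorem \ref{thm:bialg_selfdual} with the pushforward/pullback reciprocity of Proposition \ref{prop:admissible-reciprocity}. First I would recall that $\bialg^+$ is the pullback quadratic coalgebra $\langle \kfield[\nat]; R_{\field[\nat]}\rangle$ obtained from $\bialg \cong \langle \zed; \cals, f\rangle$ by restricting to the sub-basis indexed by $\calj = \nat \subset \zed$. Since each $\bialg^+_n$ is of finite type with respect to the internal degree, the naive vector space dual agrees with the graded dual, so Proposition \ref{prop:duality_quadratic} applies and $(\bialg^+)^*$ is a quadratic algebra of finite type; concretely it is the quadratic algebra $\{\field[\nat]; (R_{\field[\nat]})^\perp\}$, or in admissible notation the algebra associated to the transpose dual coalgebra of $\bialg^+$.

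The key step is to identify this transpose dual explicitly. Under the self-duality bijection $\sigma : \zed \to \zed$, $n \mapsto 1-n$, of Theorem \ref{thm:bialg_selfdual}, the subset $\calj = \nat = \{n \mid n \geq 0\}$ is carried to $\sigma(\nat) = \{n \mid n \leq 1\}$. By Proposition \ref{prop:admissible-reciprocity}, dualizing the pullback along $\calj$ produces the pushforward of $\langle \zed; \cals', f^!\rangle$ along $\sigma(\calj)$, and by Theorem \ref{thm:univsteen_self-dual} the quadratic algebra associated to $\langle \zed; \cals', f^!\rangle$ is the universal Steenrod algebra $\univsteen$. Hence the dual of $\bialg^+$ is the pushforward quadratic algebra of $\univsteen$ obtained by killing the generators indexed by $\zed \setminus \sigma(\nat)$, that is by setting $y_j = 0$ for $j \geq 2$. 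I would then check, by tracing $\sigma$ through the internal grading, that this pushforward coincides with the quotient of $\univsteen$ by the generators $\{y_j \mid j < 0\}$ up to the relabelling of generators, which is exactly the defining presentation of $\atilde$ given in the Definition preceding the statement.

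The main obstacle will be the bookkeeping at the level of generators: one must verify that the surjection of generating vector spaces induced by $\phi$ and $\sigma$ matches the quotient map $\univsteen \twoheadrightarrow \atilde$ under the identification $y_i \leftrightarrow Sq^i$, and in particular that the relations of $\atilde$ (the Adem relations without $Sq^0 = 1$) are precisely the images of the generalized Adem relations under this pushforward. This amounts to comparing the coefficient function $f$ restricted to the relevant index pairs against the binomial coefficients appearing in Lemma \ref{lem:gen_Adem_relations}, using the formula for $f$ from Corollary \ref{cor:f_bialg}; the admissibility and weak coPBW property (Propositions \ref{prop:bialg_admissible} and \ref{prop:weak_coPBW}) guarantee that the pushforward is again admissible with the expected sub-basis, so that no spurious relations or extra generators intervene. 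Once the index ranges are aligned, the identification is forced.
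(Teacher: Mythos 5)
Your overall strategy (finite type of $\bialg^+$, dual bases, transpose duality, realising the dual as a pushforward of $\univsteen$) is the right one, but the key identification in your second paragraph is wrong, and the concluding ``relabelling'' step cannot repair it. Proposition \ref{prop:admissible-reciprocity} compares the pullback and the pushforward of the \emph{same} coalgebra $\bialg$ (along $\calj$ and $\sigma(\calj)$ respectively), and it asserts only an equivalence of admissibility conditions; it does not identify the dual of the pullback with anything. Nor can you substitute Proposition \ref{prop:self-dual-reciprocity}: it requires Hypothesis \ref{hyp:ft}, which fails for $\bialg$ since $\bialg_1 \otimes \bialg_1$ is not of finite type --- that failure is the very reason transpose duality is introduced. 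The correct identification carries no $\sigma$-twist at all: under the dual-basis isomorphism, the vector space dual of the pullback $\bialg^+$ (which is $(\nat,\cals_\nat)$-admissible with coefficient function the restriction of $f$) is the quadratic algebra of its transpose dual $\langle \nat; \cals'_\nat, (f^!)|\rangle$, and this coincides with the pushforward of $\langle \zed; \cals', f^! \rangle$ along the \emph{same} subset $\nat$, because transpose duality exchanges $\cals_\nat$ and $\cals'_\nat$ inside $\nat^{\times 2}$ and the coefficient functions match. (One must check the pushforward acquires no spurious relations, i.e.\ $f(s,s')=0$ for $s \in \cals_\nat$, $s' \in \cals'\setminus\cals'_\nat$; by Corollary \ref{cor:admissible_quadratic_duality} this is exactly the admissibility of the pullback $\bialg^+$, already established.) Since the pushforward of $\univsteen$ along $\nat$ kills precisely the generators $y_j$ with $j<0$, it is $\atilde$ \emph{by definition}, with the identity matching of indices $y_i \leftrightarrow Sq^i$; Theorem \ref{thm:bialg_selfdual} is not needed, only Theorem \ref{thm:univsteen_self-dual}(1) and Corollary \ref{cor:f_bialg}.

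Your version instead lands on $\univsteen/\langle y_j : j\geq 2\rangle$, which is genuinely a different algebra: via $\sigma$ it is the pushforward of $\{\field[\zed];\Gamma_2\}$ along $\nat$, and that pushforward is not even admissible (for instance $f((3,-1),(0,2))=\binom{2}{0}=1\neq 0$ violates the criterion of Proposition \ref{prop:admissible_pushforward}). Concretely, the generalized Adem relation with $(u,v)=(-2,2)$ reads $y_{-2}y_2 = y_1y_{-1}+y_2y_{-2}+y_3y_{-3}$ in $\univsteen$, so killing the $y_j$ with $j\geq 2$ forces $y_1y_{-1}=0$ in your quotient; under your relabelling $y_j \leftrightarrow Sq^{1-j}$ this element corresponds to $Sq^0Sq^2=Sq^2Sq^0$, which is a \emph{nonzero} admissible element of $\atilde$ (there is no relation $Sq^0=1$). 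Hence no isomorphism extending your relabelling exists --- and note in any case that $\sigma(n)=1-n$ reverses the internal grading, so it could never match the gradings of $(\bialg^+)^*$ (generators in degrees $\geq 0$) and of your quotient (generators in degrees $\leq 1$). The fix is simply to delete the $\sigma$-twist and run the argument with the pushforward along $\nat$ itself, as sketched above.
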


\begin{rem}
 Analogous results hold at odd primes; this will be developed elsewhere.
\end{rem}

\providecommand{\bysame}{\leavevmode\hbox to3em{\hrulefill}\thinspace}
\providecommand{\MR}{\relax\ifhmode\unskip\space\fi MR }
\providecommand{\MRhref}[2]{%
  \href{http://www.ams.org/mathscinet-getitem?mr=#1}{#2}
}
\providecommand{\href}[2]{#2}

\end{document}